\numberwithin{equation}{section} 
\def\todo{\marginpar{\textcolor{red}{todo}}\textcolor{red}}
\theoremstyle{plain}
\newtheorem{proposition}{Proposition}[section]
\newaliascnt{lemma}{proposition} 
\newtheorem{lemma}[lemma]{Lemma}
\Crefname{lemma}{Lemma}{Lemmas}
\newaliascnt{theorem}{proposition} 
\newtheorem{theorem}[theorem]{Theorem}
\newaliascnt{corollary}{proposition} 
\newtheorem{corollary}[corollary]{Corollary}
\newaliascnt{hypothesis}{proposition}
\theoremstyle{definition}
\newaliascnt{definition}{proposition} 
\newtheorem{definition}[definition]{Definition}
\Crefname{definition}{Definition}{Definitions}
\newaliascnt{problem}{proposition} 
\newaliascnt{example}{proposition} 
\theoremstyle{remark}
\newaliascnt{remark}{proposition} 
\newtheorem{remark}[remark]{Remark}
\def\equationautorefname~#1\null{%
	(#1)\null
}
\newcommand{\R}{\mathbb{R}}
\newcommand{\C}{\mathbb{C}}
\newcommand{\N}{\mathbb{N}}
\newcommand{\E}{\mathcal{E}}
\renewcommand{\S}{\mathbb{S}}
\newcommand{\B}{\mathcal{B}}
\newcommand{\W}{\mathcal{W}}
\newcommand{\tr}{\mathrm{tr}}
\newcommand{\M}{\mathcal{M}}
\newcommand*{\dd}{\mathop{}\!\mathrm{d}}
\newcommand{\Imag}{\operatorname{im}}
\title{
    Stability of the free boundary Willmore problem 
}
\author{
    \large{Anna Dall'Acqua}\thanks{Institute for Applied Analysis, Ulm University, Helmholtzstraße 18, 89081 Ulm, Germany. \texttt{anna.dallacqua@uni-ulm.de}} 
    \and 
    \large{Fabian Rupp}\thanks{Faculty of Mathematics, University of Vienna, Oskar-Morgenstern-Platz 1, 1090 Vienna, Austria. \texttt{fabian.rupp@univie.ac.at}} 
    \and 
    \large{Reiner Schätzle}\thanks{Fachbereich Mathematik, Eberhard-Karls-Universität Tübingen, Auf der Morgenstelle 10, 72076 Tübingen, Germany. \texttt{schaetz@everest.mathematik.uni-tuebingen.de}} 
    \and 
    \large{Manuel Schlierf}\thanks{Mathematics department, Salzburg University, Hellbrunner Str. 34, 5020 Salzburg, Austria. \texttt{math@manuelschlierf.info}}
}
\date{}
\begin{document}
\maketitle
\vspace{-0.5cm}
\begin{abstract}
    We study the Willmore problem with free boundary by means of a new 
    {\L}ojasiewicz--Simon gradient inequality for functionals on infinite dimensional manifolds. In contrast to previous works, we do not rely on a gradient-like representation of the Fr\'echet derivative, but merely on an inequality. 
    For the free boundary Willmore flow, we prove that solutions starting sufficiently close to a local minimizer exist for all times and converge. In the static setting, we prove quantitative stability of free boundary Willmore immersions and a local rigidity result in a neighborhood of free boundary minimal surfaces.
\end{abstract}

{\small
\noindent \textbf{Keywords and phrases:} {\L}ojasiewicz--Simon gradient inequality, Willmore immersions with free boundary, Willmore flow, free boundary minimal surfaces, quantitative stability

\noindent \textbf{MSC(2020)}: 53E40 (primary), 35R35, 58E12, 26D10 (secondary) 
}

\section{Introduction and main results}

Let $\Sigma$ be a compact oriented surface with boundary $\partial\Sigma$ and let $f\colon \Sigma\to\R^3$ be an immersion. The \emph{Willmore energy} of $f$ is defined as
\begin{equation}
    \W(f)=\frac14\int_{\Sigma} H^2\dd\mu,
\end{equation}
where $\mu$ is the Riemannian measure on $\Sigma$ induced by the pull-back $g=f^*\langle\cdot,\cdot\rangle$ of the Euclidean metric in $\R^3$. Moreover, in local positive coordinates $A_{ij}=\langle\partial_{ij}^2f,\nu\rangle$ is the second fundamental form where 
\begin{align}\label{eq:def_normal}
\nu=\frac{\partial_1f\times\partial_2f}{|\partial_1f\times\partial_2f|}
\end{align}
is our choice for the unit normal and $H=g^{ij}\langle\partial_{ij}^2f,\nu\rangle$ is the mean curvature. While the  functional $\mathcal{W}$ was already studied by Blaschke and Thomsen \cite{blaschke}, who proved its conformal invariance for closed surfaces, it became more popular by the work of Willmore \cite{MR0202066}, who observed that round spheres are the global minimizers among closed surfaces. The first variation of the Willmore energy (for variations supported away from $\partial\Sigma$) can be represented by integration against its $L^2$-gradient, given by
\begin{equation}
    \nabla\W(f) =\frac12(\Delta_g H+|A^0|^2H)\nu,
\end{equation}
where $\Delta_g$ is the Laplace--Beltrami operator on $(\Sigma,g)$ and $A^0=A-\frac12 Hg$ is the trace-free second fundamental form. On closed surfaces, the associated dynamic problem, the \emph{Willmore flow}, is the geometric evolution equation
\begin{align}
\partial_tf = -2 \nabla \mathcal{W}(f) = - (\Delta_g H+|A^0|^2H)\nu,
\end{align}
for immersions $f\colon[0,T)\times\Sigma \to\R^3$. This is a fourth order quasilinear parabolic PDE, so no maximum principles are available. While short-time existence and uniqueness of smooth solutions can be established by standard contraction arguments, the global behavior of the flow and its potential singularities are much more delicate and remain open problems, in general.

The first contribution for the Willmore flow of closed surfaces is \cite{simonett2001}, where global existence and convergence for initial data close to the sphere have been established. In a series of papers \cite{kuwertschaetzle2002,kuwertschaetzle2001,kuwertschaetzle2004}, a curvature-concentration based blow-up procedure was developed that was eventually used to prove global existence and convergence for spherical initial data $f_0\colon \S^2\to\R^3$ satisfying $\W(f_0)\leq 8\pi$. This was recently extended to tori with rotational symmetry \cite{dallacquamullerschatzlespener2020}. In both situations, the $8\pi$-threshold that arises from the Li--Yau inequality \cite{liyau1982} is sharp \cite{blatt2009,dallacquamullerschatzlespener2020}, but the precise nature of the singularities, in particular the question if they occur in finite or infinite time, remains largely unexplored, despite some recent progress \cite{MR3331730,dallacqua2025dimensionreductionwillmoreflows,mäderbaumdicker2025willmoreenergylandscapespheres}. Lately, also the Willmore flow with Dirichlet boundary conditions has been studied in \cite{schlierf2024,eichmann2024,MR4884019} under rotational symmetry assumptions. Again, an energy threshold related to the Li--Yau inequality ensures global existence and convergence.

One of the few arguments ensuring global existence and convergence of the Willmore flow  that does not involve the Li--Yau threshold is based on the \emph{{\L}ojasiewicz--Simon inequality} \cite{MR160856,MR727703}. 
For the Willmore functional, this inequality has been proven in \cite{chillfasangovaschaetzle2009} relying on the general sufficient conditions derived in \cite{chill2003}. A rather general argument then ensures that the Willmore flow with initial data close to local minimizer exists for all times and converges to a local minimum. 

The contribution of this work is twofold. First, we revisit the problem of finding sufficient conditions for the {\L}ojasiewicz--Simon inequality to be satisfied for an abstract energy functional. We identify such conditions for functionals on Banach manifolds, a framework which is suitable for the discussion of non-linear boundary problems, such as \emph{free boundary problems} of arbitrary order.
Second, we apply this result to \emph{Willmore immersions with free boundary}, resulting not only in asymptotic and quantitative stability of local minimizers, but also in a local rigidity result for free boundary minimal surfaces.

\subsection{The {\L}ojasiewicz--Simon gradient inequality on Banach manifolds}

Functions satisfying a {\L}ojasiewicz--Simon gradient inequality possess a remarkable behavior in a neighborhood of their critical points. In finite dimensions, the inequality is satisfied by any \emph{(real) analytic} energy by {\L}ojasiewicz's result \cite{MR160856}, whereas analyticity alone is never enough in infinite dimensional Banach spaces \cite{rupp2020}. Combining \cite{MR160856} with a Lyapunov--Schmidt reduction, Simon \cite{MR727703} was first to prove the inequality in the context of infinite dimensional problems. 
In modern works, the validity of the {\L}ojasiewicz--Simon gradient inequality is usually ensured by studying the second variation. In (geometric) applications one usually requires the second variation to be a Fredholm operator of index zero or even invertible (modulo invariances); see, for instance, \cite{chill2003,ChillISEM,MR4129355,rupp2020} for general results in Banach spaces and \cite{chillfasangovaschaetzle2009,MR2594589,MR3352243,MR4451897,MR4315554,MR4523489,MR4726523,MR4855302,maederbaumdicker2025quantitativeestimatesrelativeisoperimetric,döhrer2025convergencegradientflowsknotted} for some recent applications in the context of geometric flows. A particular challenge in higher order evolutions is that identifying the kernel of the second variation is generically more difficult than in second order problems.

The setup has been extended to incorporate \emph{constraints}; see \cite{rupp2020} for finite codimension constraints in general, and also \cite{okabe2023convergencesobolevgradienttrajectories,döhrer2025convergencegradientflowsknotted} for specific examples of infinite codimension.

Our first main result is the following, see \Cref{sec:banach_manifolds} for a review on Banach manifolds.

\begin{theorem}\label{intro-thm:loja-abstract}
    Let $V$, $Z$ be two Banach spaces, $\mathcal{M}$ an analytic $V$-Banach manifold, $u_0\in U_0\subset \mathcal{M}$ an open subset, and $\E\colon U_0\subset\mathcal{M}\to\R$, $\delta\E\colon U_0\subset\mathcal{M}\to Z$ be two analytic maps satisfying
    \begin{enumerate}[(i)]
        \item\label{item:loja_abstract_1} $\|\E'(u)\|_{(T_{u}\mathcal{M})^*} \leq M \|\delta\E(u)\|_Z$ for $u\in U_0$, where the norm on $T_{u}\mathcal{M}$ is induced by a local tangent bundle chart and $M<\infty$,
        \item\label{item:loja_abstract_2} $\mathrm{ker}(\delta\E)'(u_0)\subset T_{u_0}\mathcal{M}$ is finite dimensional, and
        \item\label{item:loja_abstract_3} $\mathrm{im}(\delta\E)'(u_0)=\vcentcolon W$ is closed and complemented in $Z$.
    \end{enumerate}
    Then $\E$ satisfies a \L ojasiewicz-Simon gradient inequality on $\M$ close to $u_0$, that is, there exist $\theta\in (0,\frac12]$, $C<\infty$, and an open neighborhood $U_0(u_0)\subset U_0\subset\mathcal{M}$ such that
    \begin{equation}\label{eq:loja-abstract}
        |\E(u)-\E(u_0)|^{1-\theta} \leq C \|\delta \E(u)\|_Z\quad\text{for all $u\in U_0(u_0)$.}
    \end{equation}
\end{theorem}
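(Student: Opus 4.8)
The plan is to follow Simon's original Lyapunov--Schmidt reduction scheme, adapted to the Banach manifold setting, with the twist that we only have the inequality (i) rather than a genuine gradient representation. First I would reduce to a local model: pick a chart $\phi\colon U\to V$ around $u_0$ with $\phi(u_0)=0$, so that $\E$ becomes an analytic function $E$ on an open neighborhood of $0$ in $V$, and $\delta\E$ becomes an analytic map $\mathcal{G}\colon U\subset V\to Z$. Hypotheses (ii) and (iii) say that $L\vcentcolon=\mathcal{G}'(0)\colon V\to Z$ has finite-dimensional kernel $N\vcentcolon=\ker L$ and closed, complemented image $W$. Choose a topological complement $V = N\oplus X$ and a closed complement $Z = W\oplus Y$ with $Y$ necessarily finite dimensional if one also wants $L$ Fredholm, but note that we do \emph{not} need index zero here: what matters is that $L|_X\colon X\to W$ is a Banach space isomorphism onto $W$ (it is bounded, injective, and surjective by construction, hence invertible by the open mapping theorem).

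Next I would carry out the reduction. Writing $v = n + x$ with $n\in N$, $x\in X$, and projecting $\mathcal{G}(n+x)\in Z$ onto $W$ via the bounded projection $\pi_W$, consider the equation $\pi_W\,\mathcal{G}(n+x) = 0$. Its linearization in $x$ at the origin is $\pi_W L|_X = L|_X\colon X\to W$, an isomorphism, so the analytic implicit function theorem yields an analytic map $n\mapsto x(n)$, defined near $0\in N$, with $x(0)=0$ and $\pi_W\,\mathcal{G}(n + x(n)) = 0$. Define the reduced function $\mathfrak{f}(n)\vcentcolon= E(n + x(n))$ on a neighborhood of $0$ in the finite-dimensional space $N$; it is analytic, so by {\L}ojasiewicz's inequality \cite{MR160856} there exist $\theta\in(0,\tfrac12]$, $C_1<\infty$ with $|\mathfrak{f}(n) - \mathfrak{f}(0)|^{1-\theta}\leq C_1\|\mathfrak{d}\mathfrak{f}(n)\|$ near $0$. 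The decisive point is to control $\mathfrak{f}(0)$ and $\mathfrak{d}\mathfrak{f}$ in terms of the original data; here is where hypothesis (i) enters. By the chain rule, $\mathfrak{d}\mathfrak{f}(n)[h] = E'(n+x(n))[h + x'(n)h]$ for $h\in N$, and since $E' = \mathcal{G}$-adjoint is not available, I instead estimate $|E'(u)[\xi]| \le \|\E'(u)\|_{(T_u\M)^*}\|\xi\| \le M\|\delta\E(u)\|_Z\|\xi\|$ using (i); combined with the boundedness of $x'(n)$ near $0$, this gives $\|\mathfrak{d}\mathfrak{f}(n)\|\leq C_2 M \|\mathcal{G}(n+x(n))\|_Z$. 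Since $\pi_W\mathcal{G}(n+x(n))=0$, we have $\mathcal{G}(n+x(n)) = (\mathrm{id}-\pi_W)\mathcal{G}(n+x(n))\in Y$, and this $Y$-component can in turn be bounded by $\|\mathcal{G}(v)\|_Z$ after a further estimate relating points $v=n+x$ near the "constraint manifold" to points exactly on it.

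The last ingredient, and the main obstacle, is to upgrade the reduced inequality back to all of $U$: given arbitrary $v = n+x$ near $0$, one must (a) compare $E(v)$ with $\mathfrak{f}(n) = E(n+x(n))$ and (b) compare $\|\mathcal{G}(v)\|_Z$ with $\|\mathfrak{d}\mathfrak{f}(n)\|$. Step (a) is handled by a Taylor expansion in the $X$-direction: since $\partial_x(\pi_W E \circ(\text{reduction}))$-type quantities vanish to the right order and $L|_X$ is coercive on $X$, one gets $|E(v) - \mathfrak{f}(n)| \le C\|x - x(n)\|_X\,\|\pi_W\mathcal{G}(v)\|_W + (\text{higher order})$, hence $|E(v)-E(u_0)|^{1-\theta} \le C(|\mathfrak{f}(n) - E(u_0)|^{1-\theta} + \|\pi_W\mathcal{G}(v)\|_W^{1-\theta})$, and similarly for step (b) one estimates $\|x - x(n)\|_X \lesssim \|\pi_W\mathcal{G}(v)\|_W$ from the invertibility of $L|_X$. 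Collecting everything and absorbing the extra $\|\pi_W\mathcal{G}(v)\|_W^{1-\theta}$-term into $\|\mathcal{G}(v)\|_Z$ (possibly after shrinking the neighborhood and using $1-\theta \ge \tfrac12$ together with Young's inequality to dominate a power by a linear term times a small constant) yields $|\E(u)-\E(u_0)|^{1-\theta}\le C\|\delta\E(u)\|_Z$ on a neighborhood $U_0(u_0)$. The subtle points I expect to fight with are: ensuring all the estimates are \emph{uniform} on a fixed neighborhood independent of $n$ (requiring careful bookkeeping of the domains of the implicit-function-theorem map and of the analyticity radii), and the fact that without a gradient structure one cannot directly identify $\ker\mathfrak{d}^2\mathfrak{f}(0)$ or use self-adjointness, so the comparison in (a) must be done purely via the quasilinear structure encoded in (i) and the splitting, rather than via a Morse-type normal form.
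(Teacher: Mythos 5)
Your overall route matches the paper's: reduce to the linear case via a chart, split $V$ into kernel and complement, run the analytic implicit function theorem to produce a reduced finite-dimensional function, apply classical Łojasiewicz there, and then lift the estimate back using (i). (The paper phrases the Banach-space case as a separate statement, \Cref{thm:loja-abstract-linear}, and a remark on the diffeomorphism invariance of conditions (i)--(iii); your opening paragraphs cover the same content.) A small efficiency note: the paper solves $P_W\,\delta\E(x,y)=w$ with $w\in W$ as a \emph{free} parameter, so that for arbitrary $(x,y)$ one immediately gets $y=g(x,P_W\delta\E(x,y))$ by uniqueness, and hence $\|y-g(x,0)\|\leq C\|P_W\delta\E(x,y)\|\leq C\|\delta\E(x,y)\|_Z$. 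You solve only at $w=0$ and then appeal to invertibility of $L|_X$; that gives the same bound but needs one more sentence to make rigorous.

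There is, however, a genuine soft spot in your step (a). You assert that ``$\partial_x(\pi_W E\circ(\text{reduction}))$-type quantities vanish to the right order,'' and deduce $|E(v)-\mathfrak f(n)|\leq C\|x-x(n)\|\cdot\|\pi_W\mathcal G(v)\|+\text{(h.o.)}$. Without a gradient representation the directional derivative of $E$ in the $X$-direction at $n+x(n)$ has no reason to vanish -- there is no Lyapunov--Schmidt ``criticality'' of $E$ available, only of $\pi_W\mathcal G$. The paper instead performs a \emph{bare} first-order Taylor expansion, keeping the first-order term,
\[
  |\E(x,y)-\E(x,g(x,0))|
  \le \|\E'(x,y)\|_{V^*}\,\|y-g(x,0)\|_V + C\|y-g(x,0)\|_V^2,
\]
and only then invokes hypothesis (i) (to bound $\|\E'(x,y)\|_{V^*}\le M\|\delta\E(x,y)\|_Z$) together with $\|y-g(x,0)\|\le C\|\delta\E(x,y)\|_Z$, giving $|\E(x,y)-\E(x,g(x,0))|\le C\|\delta\E(x,y)\|_Z^2$. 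This is precisely where the novelty of the statement (inequality (i) in place of a gradient representation) is used, and it is what your ``vanishing'' heuristic skips over. Similarly, your plan to bound the reduced gradient via $\|\mathcal G(n+x(n))\|_Z$ and then via the $Y$-component of $\mathcal G$ is an unnecessary detour: the paper keeps $\|\E'(x,g(x,0))\|_{V^*}$ on the right-hand side of the reduced Łojasiewicz inequality and, at the very end, applies the mean value theorem to $\E'$ and then (i), yielding $\|\E'(x,g(x,0))\|_{V^*}\le\|\E'(x,y)\|_{V^*}+C\|y-g(x,0)\|\le C\|\delta\E(x,y)\|_Z$. Once you replace the vanishing claim by the honest Taylor expansion and use (i) exactly at these two spots, your argument closes.
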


In view of previous abstract sufficient conditions for the {\L}ojasiewicz--Simon inequality, this result is an extension in various directions. First, it allows for functionals defined on manifolds, extending \cite{rupp2020}, where only level set submanifolds of finite codimension were considered. Second, conditions \eqref{item:loja_abstract_2} and \eqref{item:loja_abstract_3} are satisfied whenever $(\delta \E)'(u_0)$ is Fredholm (of any index), but are strictly weaker assumptions. A third and main novelty is that the connection between  $\delta \E$ and the Fr\'echet derivative $\E'$ is only in terms of the norm inequality in \eqref{item:loja_abstract_1}. Usually, in previous works, cf.\ \cite{chill2003,ChillISEM,MR4129355,rupp2020}, $\delta \E$ is assumed to be a gradient representation of $\E'$ with respect to some type of inner product.

For the proof of \Cref{intro-thm:loja-abstract}, we first give a diffeomorphism
invariant version of the {\L}ojasiewicz--Simon gradient inequality,
see \Cref{thm:loja-abstract-linear} and \Cref{rem:loja}  below, where the manifold
$\mathcal{M}$ is just a Banach space. This is obtained by revisiting
the argument in \cite{chill2003}, see also \cite{MR727703}. Then \Cref{intro-thm:loja-abstract} follows
by choosing local charts.

\Cref{intro-thm:loja-abstract} provides the right general framework to apply the {\L}ojasiewicz--Simon gradient inequality to problems involving general nonlinear boundary conditions or even a 
free boundary. Indeed, the Banach manifold framework is a natural structure on the set of admissible functions satisfying certain (free) boundary conditions. Our second main result illustrates this for the Willmore flow with free boundary.

\subsection{Willmore immersions and the Willmore flow with free boundary}

Let $S\subset\R^3$ be an embedded surface and denote by $N^S\colon S\to\S^2\subset\R^3$ a normal field along $S$. Moreover, let $\Sigma$ be a compact, oriented surface with boundary $\partial\Sigma\neq \emptyset$.
An immersion $f\colon \Sigma\to\R^3$ is said to \emph{meet $S$ orthogonally} along $\partial\Sigma$ if
\begin{equation}\label{intro-eq:geombc}
    f(\partial \Sigma)\subset S\quad \text{and} \quad \langle \nu_f,N^S\circ f\rangle\Big|_{\partial \Sigma} = 0.
\end{equation}

A critical point of the Willmore energy in the class of immersions with \eqref{intro-eq:geombc} satisfies an additional \emph{natural} boundary condition. This boundary term has been computed in \cite[Equations~(2.19) and (2.20)]{alessandronikuwert2016}. For the purpose of this paper, we work with the following.

\begin{definition} A smooth immersion $f\colon \Sigma \to \R^3$ is said to satisfy the \emph{free boundary conditions} on $S$ if 
\begin{equation}\label{intro-eq:fbc}
    \begin{cases}
        f(\partial \Sigma)\subset S,\\
        \langle \nu_f,N^S\circ f\rangle\Big|_{\partial \Sigma} = 0,\\
        \frac{\partial H}{\partial \eta} + A^S(\nu,\nu) H \Big|_{\partial \Sigma} = 0
    \end{cases}
\end{equation}
where  $A^S$ denotes the (scalar) second fundamental form of $S$ and $\eta=\eta_f$ is the outer conormal along $\partial \Sigma$ induced by $g_f = f^*\langle\cdot,\cdot\rangle$.
\end{definition}

We now state the \L ojasiewciz--Simon inequality for the Willmore energy on immersions with free boundary on $S$.

\begin{theorem}\label{intro-thm:main-result-fbc}
    Let $S$ be analytic and let $\Sigma$ be a compact oriented surface with non-empty boundary and $\bar{f}\colon \Sigma\to\R^3$ be a smooth immersion satisfying \eqref{intro-eq:fbc}. Then there exist $\theta\in(0,\frac12]$, $C>0$, and $\sigma>0$ such that, for $f\in W^{4,2}(\Sigma,\R^3)$ with $\|f-\bar{f}\|_{W^{4,2}(\Sigma)}<\sigma$ satisfying the free boundary conditions \eqref{intro-eq:fbc} in the trace sense, 
    \begin{equation}\label{intro-eq:Willmore_LS}
        |\W(f)-\W(\bar{f})|^{1-\theta} \leq C \|\nabla\W(f)\|_{L^2(\dd\mu_f)}.
    \end{equation}
\end{theorem}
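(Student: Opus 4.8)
The plan is to deduce \Cref{intro-thm:main-result-fbc} from the abstract \Cref{intro-thm:loja-abstract} by setting up the correct Banach manifold of immersions satisfying the geometric constraints in \eqref{intro-eq:fbc}. First I would fix a suitable Sobolev regularity and let $\M$ be the set of $W^{4,2}$-immersions $f\colon\Sigma\to\R^3$ that meet $S$ orthogonally along $\partial\Sigma$, i.e.\ satisfy the first two conditions $f(\partial\Sigma)\subset S$ and $\langle\nu_f,N^S\circ f\rangle|_{\partial\Sigma}=0$ in the trace sense. The crucial point is that these two conditions are \emph{lower order} (zeroth and first order on the boundary, respectively), so — using that $S$ is analytic and hence locally a level set of an analytic function — one checks they cut out an analytic Banach submanifold of $W^{4,2}(\Sigma,\R^3)$, whose tangent space at $f$ consists of vector fields $\phi$ with $\phi|_{\partial\Sigma}$ tangent to $S$ and a linear first-order compatibility relation on $\partial\Sigma$. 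I would take $V=W^{4,2}$-type model space, $U_0$ a $W^{4,2}$-neighborhood of $\bar f$ in $\M$, and $\E=\W|_{U_0}$, which is analytic on $\M$ because $H^2\,\dd\mu$ depends analytically (as a composition of smooth algebraic operations and the analytic metric inverse) on the $W^{4,2}$-jet of $f$, and $W^{4,2}\hookrightarrow C^2$ in dimension two makes all curvature quantities well-defined and the integrand $L^1$.

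Next I would define $\delta\E(f)\in Z$ to encode simultaneously the interior Euler--Lagrange operator and the natural boundary condition. Concretely, take $Z=L^2(\Sigma,\R^3)\times W^{s,2}(\partial\Sigma)$ for an appropriate boundary exponent $s$ and set $\delta\E(f)=\bigl(\nabla\W(f),\ \tfrac{\partial H}{\partial\eta}+A^S(\nu,\nu)H|_{\partial\Sigma}\bigr)$, i.e.\ the pair consisting of the $L^2$-gradient and the third line of \eqref{intro-eq:fbc}. This is analytic in $f$ for the same reasons as $\E$. Then I must verify the three hypotheses. For \eqref{item:loja_abstract_1}: the Fréchet derivative $\E'(f)=\W'(f)$ acts on $\phi\in T_f\M$, and integration by parts on $\Sigma$ (valid since $f\in C^2$ and $\phi$ has enough regularity) produces the interior term $\int_\Sigma\langle\nabla\W(f),\phi\rangle\,\dd\mu_f$ plus boundary terms. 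Here the geometric constraints defining $T_f\M$ are exactly what kill all boundary contributions \emph{except} the one paired against the natural boundary condition — this is precisely the computation behind \cite{alessandronikuwert2016}, recalled as the content of \eqref{intro-eq:fbc} — so that $\W'(f)\phi$ is bounded by $\|\delta\E(f)\|_Z\|\phi\|_{T_f\M}$ via Cauchy--Schwarz and the trace theorem, giving the norm inequality \eqref{item:loja_abstract_1}. Crucially, \Cref{intro-thm:loja-abstract} only needs this \emph{inequality}, not an inner-product gradient representation, which is what makes the free-boundary setting tractable: $\delta\E$ mixes an $L^2$-gradient on $\Sigma$ with a boundary quantity of different order and is not the gradient of $\W$ with respect to any fixed inner product.

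For \eqref{item:loja_abstract_2} and \eqref{item:loja_abstract_3}: I would linearize $\delta\E$ at $\bar f$. The interior part linearizes to (one half of) the Willmore operator's linearization, $\tfrac12\Delta_g^2 + (\text{lower order})$, a fourth-order elliptic operator, and the boundary part contributes a third-order boundary operator $\partial_\eta(\cdot)$-type term; together with the two defining constraints of $\M$ (which become the remaining boundary conditions after linearization) this forms an elliptic boundary value problem in the sense of Agmon--Douglis--Nirenberg — one should check the Shapiro--Lopatinskii / complementing condition for this fourth-order system with its four boundary conditions (two from $T_{\bar f}\M$, one natural, and the orthogonality-derived one). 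Granting ellipticity, $(\delta\E)'(\bar f)\colon T_{\bar f}\M\to Z$ is a Fredholm operator (of index zero, in fact), so its kernel is finite dimensional and its image closed and complemented, verifying \eqref{item:loja_abstract_2}–\eqref{item:loja_abstract_3}. I would then invoke \Cref{intro-thm:loja-abstract} to get \eqref{eq:loja-abstract} with $\E=\W$, $\|\delta\E(f)\|_Z$ on the right; finally, since on the constraint manifold $\M$ the natural boundary condition is \emph{also} assumed to hold (the hypothesis of \Cref{intro-thm:main-result-fbc} is that $f$ satisfies all of \eqref{intro-eq:fbc}), the boundary component of $\delta\E(f)$ vanishes, so $\|\delta\E(f)\|_Z=\|\nabla\W(f)\|_{L^2(\dd\mu_f)}$ and we recover exactly \eqref{intro-eq:Willmore_LS}.

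The main obstacle I anticipate is \textbf{the elliptic boundary value problem analysis}: one must correctly identify the tangent space $T_{\bar f}\M$ (including the precise linearized form of the orthogonality condition $\langle\nu_f,N^S\circ f\rangle|_{\partial\Sigma}=0$, which involves $A^S$ and the geometry of how $f$ sits in $S$), assemble the full fourth-order system with its four boundary operators, and verify the complementing condition — a nontrivial constant-coefficient computation at the boundary symbol level. A secondary technical point is choosing the Sobolev scale and boundary exponents so that (a) $\W$ and $\delta\E$ are genuinely analytic, (b) the trace maps and integration-by-parts are justified, and (c) the target space $Z$ makes $(\delta\E)'(\bar f)$ Fredholm; some care with fractional Sobolev spaces on $\partial\Sigma$ will be needed, but this is standard once the elliptic estimate is in place.
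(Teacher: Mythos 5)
Your plan of invoking \Cref{intro-thm:loja-abstract} is the right one, and the observation that the natural (third-order) boundary condition must be fed into the abstract framework somewhere is sound; your choice to place it in the second component of $\delta\E$ rather than in the definition of $\mathcal{M}$ is a legitimate alternative packaging of hypothesis \eqref{item:loja_abstract_1}. However, there is a genuine gap: you never fix the parametrization gauge, and as a consequence hypotheses \eqref{item:loja_abstract_2} and \eqref{item:loja_abstract_3} cannot be verified.

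Concretely, with your $\mathcal{M}$ the set of $W^{4,2}$-immersions satisfying only the two geometric constraints, the tangent space $T_{\bar f}\mathcal{M}$ contains every purely tangential variation $\phi=d\bar f.\phi^\top$ whose $\phi^\top$ is tangent to $\partial\Sigma$ along the boundary, which is an infinite-dimensional space of fields. Since $\nabla\W(f)=\tfrac12(\Delta_{g_f}H_f+|A^0_f|^2H_f)\nu_f$ is a diffeomorphism-covariant quantity with only a normal component, the principal symbol of $(\nabla\W)'(\bar f)\colon W^{4,2}(\Sigma,\R^3)\to L^2(\Sigma,\R^3)$ vanishes identically on tangential directions; the linearization you describe is \emph{not} a fourth-order elliptic operator on $\R^3$-valued fields, and the Shapiro--Lopatinskii step cannot even be set up. In the relevant case that $\bar f$ is a free boundary Willmore immersion, both components of $\delta\E$ vanish identically at $\bar f$ and are diffeomorphism-covariant, so the tangential fields above actually lie in $\ker(\delta\E)'(\bar f)|_{T_{\bar f}\mathcal{M}}$, which is therefore infinite dimensional — condition \eqref{item:loja_abstract_2} fails. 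This is precisely what the paper's generalized Gauss coordinates (\Cref{prop:ex-gauss-coord}) are designed to cure: parametrizing nearby immersions as $f_w=\Phi(\cdot,w(\cdot))$ for a \emph{scalar} $w$ eliminates the tangential gauge degeneracy, builds $f(\partial\Sigma)\subset S$ into $\Phi$ itself, and leaves a scalar boundary operator $\mathcal{B}$ whose zero set is the manifold $\mathcal{M}$, on which the remaining scalar fourth-order operator $(\delta\E)'(0)$ genuinely is Fredholm (\Cref{lem:d^2E-0-with-B'0-is-fredholm}, \Cref{prop:deltaeprime-ffi}); a reparametrization lemma (\Cref{prop:reparam-as-fw}) then transfers the resulting inequality from Gauss coordinates back to general nearby $f$.
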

We obtain the {\L}ojasiewicz--Simon gradient inequality in $W^{4,2}$ and
with the norm of the $L^2$-gradient on the right-hand side of \eqref{intro-eq:Willmore_LS},
corresponding to $Z = L^2$ in \Cref{intro-thm:loja-abstract}, as we follow the argument
in \cite{chillfasangovaschaetzle2009,rupp2023}. One may be tempted to believe that a stronger inequality
should be valid for the Willmore functional on a space closer to its
natural energy space of $W^{2,2} \cap W^{1,\infty}$-immersions
and closer to $Z = (W^{2,2} \cap W^{1,\infty})^*$. In case such an inequality
is true, its proof would require new ideas beyond \cite{chillfasangovaschaetzle2009,rupp2023}.

To prove \Cref{intro-thm:main-result-fbc}, we first construct suitable \emph{generalized Gauss coordinates} in a neighborhood of an immersion $\bar f\colon \Sigma\to\R^3$ satisfying \eqref{intro-eq:fbc}. Our approach is inspired by the work of Stahl \cite{stahl1996} where short-time existence of the mean curvature flow with free boundary was studied. The main idea is to parametrize a tubular neighborhood of a fixed immersion $\bar f$ satisfying \eqref{intro-eq:fbc} using the vector flow with infinitesimal generator given by an extension of the Gauss map $\nu_{\bar f}$. 

In our application, to apply \Cref{intro-thm:loja-abstract}, we need to ensure analyticity of the Willmore energy on immersions given in such a way. We thus need to consider a more explicit construction as in \cite{stahl1996} and require $S$ to be analytic, allowing us to prove that \eqref{intro-eq:fbc} determines an analytic Banach manifold. Then, to verify \eqref{item:loja_abstract_2} and \eqref{item:loja_abstract_3}, we restrict to variations which are parametrized in generalized Gauss coordinates, and then apply (elliptic) PDE theory. 
For \eqref{item:loja_abstract_1}, 
 we crucially rely on the fact that the additional third order boundary condition in \eqref{intro-eq:fbc} arises as the natural boundary condition. We may then apply \Cref{intro-thm:loja-abstract} to the Willmore functional in generalized Gauss coordinates which then proves \Cref{intro-thm:main-result-fbc}.
 
An important class of global free boundary Willmore minimizers are given by \emph{free boundary minimal surfaces.} 
Indeed, such surfaces meet the support surface $S$ orthogonally, while trivially being Willmore minimizing as $H\equiv 0$, in particular, they satisfy \eqref{intro-eq:fbc}. Inside the unit ball $B^3\subset \R^3$ (so $S=\S^2$), it has been proven by Nitsche \cite{MR784101} that the only free boundary minimal disks are the equatorial ones. Such a rigidity result is not known for surfaces with larger genus or number of boundary components. 
In the work of Fraser--Schoen \cite{MR3461367}, it has been shown that for any $\ell\in\N$ there exists a smooth embedded free boundary minimal surface in $B^3$ with genus zero and $\ell$ boundary components; see also \cite{MR4524829}, the recent survey \cite{MR4251121}, and the references therein.

As a first application of \Cref{intro-thm:main-result-fbc}, we obtain a local rigidity statement for the Willmore problem with free boundary. We term an immersion $f\colon\Sigma\to\R^3$ satisfying \eqref{intro-eq:fbc} and $\nabla \W(f)=0$ a \emph{free boundary Willmore immersion}. Our next result shows that, in a neighborhood of a free boundary minimal surface, any free boundary Willmore immersion must also be already minimal. This applies in particular to the minimal surfaces in $B^3$ obtained in \cite{MR3461367}.

\begin{corollary}
    Let $\Sigma$ be a compact surface with $\partial\Sigma\neq \emptyset$ and let $\Omega\subset \R^3$ be an open set with analytic boundary $S=\partial\Omega$.    
    Let $\bar f\colon \Sigma\to \R^3$ be an immersed free boundary minimal surface in $\Omega$. Then there exists $\sigma>0$ such that any free boundary Willmore immersion $f\colon \Sigma \to \R^3$ with $\Vert f-\bar f\Vert_{W^{4,2}(\Sigma;\R^3)}< \sigma$ is also minimal, that is, $H_f\equiv 0$.
\end{corollary}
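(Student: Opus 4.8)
The plan is to obtain the corollary as a direct consequence of the \L ojasiewicz--Simon gradient inequality established in \Cref{intro-thm:main-result-fbc}. The crucial point is that an immersed free boundary minimal surface $\bar f$ is not merely a critical point of $\W$ but a \emph{global minimizer with vanishing energy}: since $H_{\bar f}\equiv 0$ we have $\W(\bar f)=\tfrac14\int_\Sigma H_{\bar f}^2\dd\mu_{\bar f}=0$, while $\W\geq 0$ on all immersions. Moreover $\bar f$ satisfies the free boundary conditions \eqref{intro-eq:fbc}: the inclusion $\bar f(\partial\Sigma)\subset S$ and the orthogonality $\langle\nu_{\bar f},N^S\circ\bar f\rangle=0$ hold by the very definition of a free boundary minimal surface, and the third, natural boundary condition $\frac{\partial H}{\partial\eta}+A^S(\nu,\nu)H=0$ is trivially fulfilled because $H_{\bar f}\equiv 0$. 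Finally, $\bar f$ is smooth (indeed analytic, by the free boundary regularity theory for minimal surfaces against the analytic support $S=\partial\Omega$), so that \Cref{intro-thm:main-result-fbc} is applicable with $\bar f$ as reference immersion.

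First I would invoke \Cref{intro-thm:main-result-fbc} with this $\bar f$ and $S=\partial\Omega$, which is analytic by hypothesis: this produces $\theta\in(0,\tfrac12]$, $C>0$, and $\sigma>0$ such that the inequality \eqref{intro-eq:Willmore_LS} holds for every $W^{4,2}$-immersion $f$ with $\|f-\bar f\|_{W^{4,2}(\Sigma)}<\sigma$ satisfying \eqref{intro-eq:fbc} in the trace sense. This $\sigma$ is the one claimed in the statement.

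Next, let $f\colon\Sigma\to\R^3$ be a free boundary Willmore immersion with $\|f-\bar f\|_{W^{4,2}(\Sigma)}<\sigma$. By definition $f$ satisfies \eqref{intro-eq:fbc} and $\nabla\W(f)=0$, so the right-hand side of \eqref{intro-eq:Willmore_LS} vanishes and therefore $|\W(f)-\W(\bar f)|^{1-\theta}=0$, i.e.\ $\W(f)=\W(\bar f)=0$. Hence $\int_\Sigma H_f^2\dd\mu_f=0$, and since $f$ is a smooth immersion this forces $H_f\equiv 0$, which proves the corollary. In particular, this applies to each of the embedded genus-zero free boundary minimal surfaces in $B^3$ constructed in \cite{MR3461367}.

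I do not expect a genuine obstacle here: the corollary is a formal consequence of \Cref{intro-thm:main-result-fbc} once one has recorded that free boundary minimal surfaces are zero-energy global minimizers lying in the admissible class \eqref{intro-eq:fbc}. The only points that deserve an explicit (but short) line are the verification of the natural third order boundary condition for $\bar f$ --- which is automatic from $H_{\bar f}\equiv 0$ --- and the smoothness (analyticity) of $\bar f$ needed to match the hypotheses of \Cref{intro-thm:main-result-fbc}.
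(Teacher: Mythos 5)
Your argument is correct and follows the paper's proof exactly: invoke \Cref{intro-thm:main-result-fbc} at the free boundary minimal surface $\bar f$, use $\nabla\W(f)=0$ to kill the right-hand side, and conclude $\W(f)=\W(\bar f)=0$, hence $H_f\equiv 0$. The extra remarks you add (verifying \eqref{intro-eq:fbc} for $\bar f$ and its smoothness) are sensible sanity checks but do not change the route.
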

\begin{proof}
Let $\sigma>0$ be the size of the {\L}ojasiewicz neighborhood of $\bar f$. Then, we have
\begin{align}
    |\mathcal{W}(f)-\mathcal{W}(\bar f)|^{1-\theta} \leq C \Vert  \nabla \mathcal{W}(f)\Vert_{L^2(\dd \mu_f)} =0,
\end{align}
since $f$ is a free boundary Willmore immersion. Thus, $\mathcal{W}(f) =\mathcal{W}(\bar f)=0$ as $\bar f$ is minimal.
\end{proof}

The second application of \Cref{intro-thm:main-result-fbc} is on the associated gradient flow.
A family of immersions $f\colon[0,T)\times \Sigma\to\R^3$ satisfying
\begin{equation}\label{intro-eq:free-bdry-willmore-flow}
    \begin{cases}
        \langle \partial_t f,\nu\rangle = -\big(\Delta_gH+|A^0|^2H\big)&\text{on $[0,T)\times \Sigma$}\\
        f(0,\cdot)=f_0&\text{on $\Sigma$}\\
        f(t,\partial \Sigma)\subset S &\text{for $0\leq t<T$}\\
        \langle \nu, N^S\circ f\rangle = 0 &\text{on $[0,T)\times\partial \Sigma$}\\
        \frac{\partial H}{\partial\eta} + A^S(\nu,\nu)H = 0&\text{on $[0,T)\times\partial \Sigma$}
    \end{cases}  
\end{equation}
is called \emph{free boundary Willmore flow} starting in $f_0$. Here we assume that the initial datum $f_0\colon \Sigma \to \R^3$  is a smooth immersion satisfying also \eqref{intro-eq:fbc}.

The \emph{free boundary Willmore flow} can be seen as a higher-order analog of the mean curvature flow with free boundary conditions in the sense that stationary solutions for the mean curvature flow are also stationary solutions for the Willmore flow. In literature, there are various results on the free boundary mean curvature flow. In particular, the flow is studied for graphs in \cite{huisken89}, well-posedness in the general case is treated in \cite{stahl1996}, a level-set formulation is studied in \cite{gigaohnumasato99} and a notion of Brakke flow is given in \cite{edelen2020}. Less is known for its higher-order analog given by \Cref{intro-eq:free-bdry-willmore-flow}. In \cite{metsch2022}, a constrained version of \eqref{intro-eq:free-bdry-willmore-flow} was studied where a non-local Lagrange multiplier guarantees fixed surface area $\lambda>0$ along the evolution. Assuming disk-type topology and that $\lambda$ is sufficiently small, \cite{metsch2022} yields global existence and sub-convergence to critical points of the area-preserving flow.

In this work, we establish the following local stability result for the unconstrained free boundary Willmore flow by adapting the arguments in \cite{chillfasangovaschaetzle2009} and applying \Cref{intro-thm:loja-abstract}.

\begin{theorem}\label{intro-thm:FBWF}
    Let $\alpha\in (0,1)$, $S$ be analytic, $\Sigma$ be a compact oriented surface with non-empty boundary, and let $\bar f\colon \Sigma\to\R^3$ be a local $C^4$-minimizer of the Willmore energy among immersions with \eqref{intro-eq:fbc}. Then there exists $\bar \varepsilon=\bar \varepsilon(\alpha,\bar f)>0$ such that, for all  immersions $f_0\in C^{4,\alpha}(\Sigma,\R^3)$ satisfying \eqref{intro-eq:fbc} and 
    \begin{equation}
        \|f_0-\bar f\|_{C^{4,\alpha}(\Sigma)} < \bar \varepsilon,
    \end{equation}
    there exists a global free boundary Willmore flow starting in $f_0$ (up to reparametrization) which converges smoothly to a free boundary Willmore surface $f_{\infty}$ with $\W(f_\infty)=\W(\bar f)$ for $t\to\infty$.
\end{theorem}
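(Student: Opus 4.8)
The plan is to combine the {\L}ojasiewicz--Simon inequality of \Cref{intro-thm:main-result-fbc} with standard parabolic theory for the free boundary Willmore flow, following the by-now classical scheme of Chill--Fa\v{s}angov\'a--Sch\"atzle \cite{chillfasangovaschaetzle2009} adapted to the boundary setting. First I would establish the short-time existence and uniqueness of a smooth free boundary Willmore flow for initial data in $C^{4,\alpha}(\Sigma,\R^3)$ satisfying \eqref{intro-eq:fbc}, together with parabolic smoothing estimates; this is a fourth-order quasilinear parabolic system with a free boundary, and the relevant well-posedness should follow by linearizing in generalized Gauss coordinates (built as in the sketch preceding \Cref{intro-thm:main-result-fbc}, inspired by Stahl \cite{stahl1996}) and applying the standard theory for parabolic systems with the Lopatinskii--Shapiro condition. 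A crucial point is that the three boundary conditions in \eqref{intro-eq:free-bdry-willmore-flow} form a well-posed boundary value problem for the linearization, and that the flow depends continuously on the initial datum in the relevant norms on short time intervals.

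Next I would set up the standard {\L}ojasiewicz--Simon argument. Fix the local $C^4$-minimizer $\bar f$ and let $\sigma>0$, $\theta\in(0,\tfrac12]$, $C<\infty$ be the constants from \Cref{intro-thm:main-result-fbc} applied to $\bar f$ (noting that a local $C^4$-minimizer is in particular a free boundary Willmore immersion, hence satisfies \eqref{intro-eq:fbc} and $\nabla\W(\bar f)=0$). Along the flow one computes the energy dissipation identity
\begin{equation}
    \frac{\dd}{\dd t}\W(f(t)) = -\int_\Sigma |\nabla\W(f(t))|^2\dd\mu_{f(t)} = -\tfrac14\|\partial_t f(t)\|_{L^2(\dd\mu_{f(t)})}^2\cdot 4,
\end{equation}
using that the boundary terms in the first variation vanish precisely because $f(t)$ satisfies the natural boundary condition in \eqref{intro-eq:free-bdry-willmore-flow}; in particular $\W(f(t))$ is non-increasing. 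Combining this with \eqref{intro-eq:Willmore_LS} and the chain rule $\frac{\dd}{\dd t}|\W(f(t))-\W(\bar f)|^{\theta} = -\theta\,|\W(f(t))-\W(\bar f)|^{\theta-1}\int_\Sigma|\nabla\W|^2\dd\mu \leq -\tfrac{\theta}{C}\|\nabla\W(f(t))\|_{L^2(\dd\mu_{f(t)})}$ yields, as long as the flow stays in the {\L}ojasiewicz neighborhood $\{\|f-\bar f\|_{W^{4,2}}<\sigma\}$, an a priori bound on $\int_0^T\|\partial_t f(t)\|_{L^2(\dd\mu_{f(t)})}\dd t$ in terms of $|\W(f_0)-\W(\bar f)|^\theta$. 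Together with interpolation and the parabolic smoothing estimates, this controls $\int_0^T\|\partial_t f(t)\|_{C^{4,\alpha}}\dd t$ (or a slightly weaker norm sufficient to run the continuity argument), hence the length of the flow path.

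The core of the argument is then a continuity/bootstrap argument: choosing $\bar\varepsilon$ small enough that $\|f_0-\bar f\|_{C^{4,\alpha}}<\bar\varepsilon$ forces both $\|f_0-\bar f\|_{W^{4,2}}$ and $|\W(f_0)-\W(\bar f)|^\theta$ to be so small that the path-length bound above keeps the flow trapped inside the {\L}ojasiewicz neighborhood for all times it exists. A standard open--closed argument on the maximal existence time then shows $T=\infty$, the flow exists globally and has finite total $L^2$-length, so $f(t)$ is Cauchy and converges (after possibly a tangential reparametrization to fix the gauge, since the flow equation only prescribes the normal speed) in $C^4$ to some limit immersion $f_\infty$ satisfying \eqref{intro-eq:fbc}; the smoothing estimates upgrade this to smooth convergence, and the energy identity forces $\nabla\W(f_\infty)=0$ and $\W(f_\infty)=\W(\bar f)$, so $f_\infty$ is a free boundary Willmore surface at the minimizer's energy level. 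The main obstacle I anticipate is the first step: carefully establishing short-time existence, uniqueness (up to reparametrization), parabolic regularization, and continuous dependence for this fourth-order free boundary flow, since the natural boundary condition $\frac{\partial H}{\partial\eta}+A^S(\nu,\nu)H=0$ together with the orthogonality condition makes the boundary symbol delicate to analyze and the gauge-fixing (reparametrization) must be handled consistently with the boundary conditions; the {\L}ojasiewicz part itself, once \Cref{intro-thm:main-result-fbc} is available, is then essentially the standard mechanism.
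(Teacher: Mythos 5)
Your proposal follows essentially the same route as the paper: parametrize the flow in generalized Gauss coordinates near $\bar f$, establish short-time existence via a linearized inverse function theorem argument (the paper's \Cref{prop:ste-at-0}), then run the standard {\L}ojasiewicz--Simon scheme of Chill--Fa\v{s}angov\'a--Sch\"atzle---energy dissipation plus \Cref{intro-thm:main-result-fbc} gives a path-length bound, uniform Schauder estimates (\Cref{lem:schauder}) plus interpolation trap the flow in the {\L}ojasiewicz neighborhood, and an open--closed argument gives global existence and convergence. One small slip: your dissipation identity has the wrong constant (with the paper's normalization $\nabla\W=\tfrac12(\Delta H+|A^0|^2H)\nu$ and flow speed $\partial_t^\perp f=-2\nabla\W(f)$, one gets $\tfrac{\dd}{\dd t}\W(f)=-2\int|\nabla\W(f)|^2\dd\mu=-\tfrac12\|\partial_t^\perp f\|_{L^2}^2$), but this has no effect on the argument's structure.
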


A simple example of a disk-type local Willmore minimizer with free boundary which is not minimal is a hemisphere sitting on a plane $S\subset \R^3$. The fact that this is the minimizer follows from a reflection argument and \cite{MR0202066}.

Moreover, as a corollary of our flow analysis, we obtain a local quantitative stability estimate controlling the distance from the set of free boundary Willmore immersions by the Willmore energy deficit. Similar strategies have also been used in \cite{MR4568345,rupflin2023sharpquantitativerigidityresults,MR4896163,maederbaumdicker2025quantitativeestimatesrelativeisoperimetric}.

\begin{corollary}\label{cor:quanti_stability}
Let $S, \Sigma,\bar f, \bar\varepsilon,\alpha$ and $f_0$ be as in \Cref{intro-thm:FBWF} and let 
\begin{align}
\mathcal{C}(\Sigma,S) \vcentcolon = \big\{f\colon \Sigma\to\R^3\mid f\text{  satisfies \eqref{intro-eq:fbc} and $\nabla\mathcal{W}(f)=0$}\big\}.
\end{align}
Then there exist $C=C(\alpha,\bar f)\in (0,\infty)$ and $\gamma = \gamma(\alpha,\bar f)\in (0,\frac12]$ such that
\begin{align}
\inf \{ \Vert f_0\circ\Psi-f\Vert_{C^4(\Sigma)} \mid f\in \mathcal{C}(\Sigma,S), \Psi\colon \Sigma\to\Sigma \text{ $C^4$-diffeomorphism}\} \leq C \big(\mathcal{W}(f_0)-\mathcal{W}(\bar f)\big)^{\gamma}.
\end{align}
\end{corollary}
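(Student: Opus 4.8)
The plan is to derive \Cref{cor:quanti_stability} from the flow analysis in \Cref{intro-thm:FBWF} together with the {\L}ojasiewicz--Simon inequality from \Cref{intro-thm:main-result-fbc}, using the standard ``{\L}ojasiewicz implies convergence with rate'' mechanism but reading off the distance estimate at time $t=0$. First I would recall that, under the hypotheses, the flow $f(t,\cdot)$ starting in $f_0$ exists globally and converges smoothly (up to reparametrization) to a free boundary Willmore immersion $f_\infty\in\mathcal C(\Sigma,S)$ with $\W(f_\infty)=\W(\bar f)$. Since $\W$ is non-increasing along the flow, $\W(f_0)\geq\W(f(t,\cdot))\geq\W(f_\infty)=\W(\bar f)$, so the energy deficit $\W(f_0)-\W(\bar f)\geq 0$ is exactly the total energy dissipated along the flow.

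The core estimate is the length bound. Along the flow one has the dissipation identity
\begin{equation}
\frac{\dd}{\dd t}\W(f(t,\cdot)) = -2\|\nabla\W(f(t,\cdot))\|_{L^2(\dd\mu_{f(t,\cdot)})}^2
\end{equation}
(up to the tangential reparametrization, which does not change $\W$), combined with the {\L}ojasiewicz--Simon inequality \eqref{intro-eq:Willmore_LS} applied at $\bar f$: for $t$ large enough that $\|f(t,\cdot)-\bar f\|_{W^{4,2}}<\sigma$ we get $|\W(f(t,\cdot))-\W(\bar f)|^{1-\theta}\leq C\|\nabla\W(f(t,\cdot))\|_{L^2}$. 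Writing $\phi(t)\vcentcolon=(\W(f(t,\cdot))-\W(\bar f))^{\theta}$, the usual computation gives $-\frac{\dd}{\dd t}\phi(t)\geq c\,\|\nabla\W(f(t,\cdot))\|_{L^2}\geq c'\,|\langle\partial_t f,\nu\rangle|$ pointwise-in-time in an $L^2$ sense, hence $\int_{t}^{\infty}\|\partial_t f(s,\cdot)\|_{L^2}\dd s\leq C'\phi(t)\leq C'\big(\W(f_0)-\W(\bar f)\big)^{\theta}$. This controls the $L^2$-distance from $f(t,\cdot)$ to $f_\infty$. To upgrade to the $C^4$-norm as claimed, I would use the smoothing estimates for the flow (interior-in-time parabolic estimates, available since all relevant quantities are bounded by the subconvergence argument behind \Cref{intro-thm:FBWF}) to interpolate: the $C^4$-distance traveled after time $t$ is bounded by the $L^2$-distance times a fixed constant, so $\|f(t,\cdot)-f_\infty\|_{C^4}\leq C\big(\W(f_0)-\W(\bar f)\big)^{\theta}$ for all large $t$. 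The subtlety is that the convergence is only up to reparametrization, so I would carry along a family of diffeomorphisms $\Psi_t$ with $f(t,\cdot)\circ\Psi_t$ converging in $C^4$, which is exactly why the infimum over $C^4$-diffeomorphisms $\Psi$ appears in the statement.

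The remaining point is to transport the estimate from a large time $t$ back to $t=0$. One cannot directly bound $\|f_0-f_\infty\|_{C^4}$ by the deficit because on the initial interval $[0,t]$ the flow might move a definite amount while dissipating only a tiny bit of energy — this is precisely the obstacle. The standard fix: choose $\bar\varepsilon$ so small that $f_0$ is already in the {\L}ojasiewicz neighborhood and the ``escape'' argument from the proof of \Cref{intro-thm:FBWF} keeps $f(t,\cdot)$ in that neighborhood for \emph{all} $t\geq 0$; then the differential inequality for $\phi$ holds from $t=0$ onward, and integrating from $0$ yields $\int_0^{\infty}\|\partial_t f(s,\cdot)\|_{L^2}\dd s\leq C\big(\W(f_0)-\W(\bar f)\big)^{\theta}$ directly. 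Combined with the smoothing-and-interpolation step applied on $[0,\infty)$ (using that $f_0\in C^{4,\alpha}$ gives the needed a priori regularity near $t=0$ without interior-in-time loss), this gives $\|f_0\circ\Psi_0^{-1}-f_\infty\|_{C^4(\Sigma)}\leq C\big(\W(f_0)-\W(\bar f)\big)^{\theta}$ for a suitable $C^4$-diffeomorphism. Since $f_\infty\in\mathcal C(\Sigma,S)$, this particular choice of $f=f_\infty$ and $\Psi=\Psi_0^{-1}$ bounds the infimum, and setting $\gamma\vcentcolon=\theta\in(0,\tfrac12]$ completes the proof.

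The main obstacle I anticipate, beyond the bookkeeping of reparametrizations, is making the interpolation from $L^2$ to $C^4$ uniform on $[0,\infty)$: one needs the curvature and its derivatives to stay bounded along the whole flow, which follows from the global existence and smooth subconvergence already granted by \Cref{intro-thm:FBWF}, but the constants must be shown to depend only on $\alpha$ and $\bar f$ (and not on $f_0$), using that all $f_0$ under consideration lie in a fixed $\bar\varepsilon$-ball. This is the technical heart; everything else is the by-now-classical {\L}ojasiewicz--Simon argument.
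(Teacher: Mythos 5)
Your proposal follows the same route as the paper: run the flow from $f_{w_0}=f_0\circ\Psi$, use the {\L}ojasiewicz--Simon inequality along the flow to bound $\int_0^\infty\|\partial_t f\|_{L^2}\dd s$ by $C(\W(f_0)-\W(\bar f))^\theta$, upgrade from $L^2$ to $C^4$ using the uniform a priori bounds, and take $f=f_\infty$ together with the initial reparametrization $\Psi$ to bound the infimum. That is exactly the paper's proof. Two small inaccuracies are worth flagging, though. First, your step ``the $C^4$-distance traveled after time $t$ is bounded by the $L^2$-distance times a fixed constant'' is not what interpolation gives you: combining the uniform $C^{4,\alpha'}$ Schauder bound \eqref{eq:schauder-bound} with the $L^2$ estimate yields $\|f(t)-f_{w_0}\|_{C^4}\leq C\|f(t)-f_{w_0}\|_{C^{4,\alpha'}}^{1-\beta}\|f(t)-f_{w_0}\|_{L^2}^{\beta}\leq C\|f(t)-f_{w_0}\|_{L^2}^{\beta}$ for some $\beta\in(0,1)$, i.e.\ a power, not a constant multiple. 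Consequently the correct exponent is $\gamma=\theta\beta$, not $\gamma=\theta$; since $\theta\beta\in(0,\frac12]$ still, the statement is unaffected, but your claimed value of $\gamma$ is too strong. Second, the concern about carrying a time-dependent family $\Psi_t$ is unnecessary here: in the Gaussian-coordinate parametrization the flow is $f(t)=f_{w(t)}$ on the fixed $\Sigma$, and this converges in $C^4$ to $f_\infty$ without any further reparametrization — the only diffeomorphism entering is the initial $\Psi$ with $f_{w_0}=f_0\circ\Psi$, which is exactly why the infimum in the statement suffices.
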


We prove \Cref{intro-thm:loja-abstract} at the end of \Cref{sec:abstr-theory}, \Cref{intro-thm:main-result-fbc} at the end of \Cref{sec:loja_Willmore}, and \Cref{intro-thm:FBWF} and \Cref{cor:quanti_stability} at the end of \Cref{sec:freebdy}.

\section{The \L ojasiewicz-Simon inequality on Banach manifolds}\label{sec:abstr-theory}

To investigate {\L}ojasiewicz--Simon inequalities for functionals in nonlinear spaces, we translate the problem to the linear setting through local coordinates. The goal is to find sufficient conditions in the linear setting that behave well with respect to transformations by diffeomorphism.

Consider an energy functional $\E$ on a Banach space $V$ and a diffeomorphism $\varphi$ between open subsets of $V$. The chain rule yields that $\tilde \E \vcentcolon = \E\circ\varphi$ satisfies
\begin{align}\label{eq:pizza}
\tilde\E'(\tilde v) = \E'(\varphi(\tilde v)). \varphi'(\tilde v),\qquad \tilde v\in V.
\end{align}
Classical approaches to the {\L}ojasiewicz--Simon inequality involve the second variation. Equation \eqref{eq:pizza} shows that to translate properties from $\E''$ to $\tilde \E''$, one must control the influence of $\varphi''$. This approach was followed in \cite{rupp2020}, relying on previous work in the linear setting in \cite{chill2003}. In the case of submanifolds, the resulting inequality then involves the tangential gradient. 

We now present a novel result in Banach spaces that behaves well under diffeomorphisms as above without requiring further assumptions on $\varphi''$; see \Cref{rem:loja} below for details.

\begin{theorem}\label{thm:loja-abstract-linear}
Let $V$, $Z$ be two Banach spaces, $v_0\in V_0\subset V$ an open subset, and let $\mathcal{E}\colon V_0\to\R$, $\delta \mathcal{E}\colon V_0\to Z$ be two analytic maps satisfying
    \begin{enumerate}[(i)]
        \item\label{item:loja_lin_1} $\|\mathcal{E}'(v)\|_{V^*} \leq M \|\delta\mathcal{E}(v)\|_Z$ for $v\in V_0$ and some $M<\infty$,
        \item\label{item:loja_lin_2} $\mathrm{ker}(\delta\mathcal{E})'(v_0)\subset V$ is finite dimensional, and
        \item\label{item:loja_lin_3} $\mathrm{im}(\delta\mathcal{E})'(v_0)=\vcentcolon W$ is closed and complemented in $Z$.
    \end{enumerate}
    Then $\mathcal{E}$ satisfies a \L ojasiewicz-Simon gradient inequality close to $v_0$, that is, there exist $\theta\in (0,\frac12]$, $C<\infty$, and an open neighborhood $V_0(v_0)\subset V_0\subset V$ such that
    \begin{equation}\label{eq:loja-abstract}
        |\mathcal{E}(v)-\mathcal{E}(v_0)|^{1-\theta} \leq C \|\delta \mathcal{E}(v)\|_Z\quad\text{for all $u\in V_0(v_0)$.}
    \end{equation}
\end{theorem}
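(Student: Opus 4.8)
The plan is to run a Lyapunov--Schmidt reduction down to the classical finite-dimensional \L ojasiewicz inequality \cite{MR160856}, in the spirit of \cite{MR727703,chill2003}, but carrying along only the norm bound \eqref{item:loja_lin_1} rather than a gradient representation of $\E'$.

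\emph{Step 1 (reduction to a critical point and Lyapunov--Schmidt splitting).} If $\delta\E(v_0)\neq 0$, then by continuity $\|\delta\E(v)\|_Z$ stays bounded away from $0$ near $v_0$ while $|\E(v)-\E(v_0)|\to 0$, so the stated inequality holds trivially with $\theta=\tfrac12$ on a small neighborhood; hence I may assume $\delta\E(v_0)=0$, and then \eqref{item:loja_lin_1} forces $\E'(v_0)=0$. Set $L\vcentcolon=(\delta\E)'(v_0)$. By \eqref{item:loja_lin_2}, $N\vcentcolon=\ker L$ is finite-dimensional, hence complemented: $V=N\oplus X$ with $X$ closed. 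By \eqref{item:loja_lin_3}, there is a bounded projection $P\colon Z\to Z$ with $\Imag P=W\vcentcolon=\Imag L$; since $X$ and $W$ are closed (hence Banach) and $L|_X\colon X\to W$ is a bounded bijection, it is an isomorphism. Now $G\vcentcolon=P\circ\delta\E\colon V_0\to W$ is analytic with $G(v_0)=0$ and $G'(v_0)|_X=L|_X$, so the analytic implicit function theorem provides a neighborhood $N_0$ of $0$ in $N$ and an analytic map $\psi\colon N_0\to X$, $\psi(0)=0$, such that, for $v$ close to $v_0$ written uniquely (and continuously in $v$) as $v=v_0+\xi+\eta$ with $\xi\in N$, $\eta\in X$, one has $G(v)=0$ if and only if $\eta=\psi(\xi)$.

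\emph{Step 2 (the reduced functional and the two estimates).} I would set $\Gamma\colon N_0\to\R$, $\Gamma(\xi)\vcentcolon=\E(v_0+\xi+\psi(\xi))$; this is analytic on a finite-dimensional domain with $\Gamma'(0)=0$, so \L ojasiewicz's inequality \cite{MR160856} gives $\theta\in(0,\tfrac12]$, $C_1<\infty$, and a neighborhood of $0$ on which $|\Gamma(\xi)-\Gamma(0)|^{1-\theta}\leq C_1\|\Gamma'(\xi)\|_{N^*}$ (replacing $\theta$ by $\min\{\theta,\tfrac12\}$ is harmless as $|\Gamma(\xi)-\Gamma(0)|\leq 1$ near $0$). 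Writing $v_\xi\vcentcolon=v_0+\xi+\psi(\xi)$, the goal is to show, for $v=v_0+\xi+\eta$ in a sufficiently small neighborhood of $v_0$,
\begin{align}
\|\Gamma'(\xi)\|_{N^*}&\leq C_2\,\|\delta\E(v)\|_Z, \label{eq:plan-grad}\\
|\E(v)-\Gamma(\xi)|&\leq C_3\,\|\delta\E(v)\|_Z^{2}. \label{eq:plan-quad}
\end{align}
Both rest on the Lipschitz estimate $\|\eta-\psi(\xi)\|_X\leq C\,\|\delta\E(v)\|_Z$: writing $P\delta\E(v)=G(v)-G(v_\xi)=\big(\int_0^1 G'(v_\xi+t(\eta-\psi(\xi)))|_X\,dt\big)(\eta-\psi(\xi))$, the averaged operator is close to the isomorphism $L|_X$ for $v$ near $v_0$, hence itself invertible with uniformly bounded inverse (this is where \eqref{item:loja_lin_3} is used), and $\|P\delta\E(v)\|_Z\leq\|P\|\,\|\delta\E(v)\|_Z$. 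Since $\delta\E$ is $C^1$ near $v_0$, it follows that $\|\delta\E(w)\|_Z\leq C'\,\|\delta\E(v)\|_Z$ for every $w$ on the segment from $v$ to $v_\xi$. Then \eqref{eq:plan-grad} follows from $\Gamma'(\xi).h=\E'(v_\xi).(h+\psi'(\xi)h)$, boundedness of $\psi'$ near $0$, and \eqref{item:loja_lin_1}; and \eqref{eq:plan-quad} follows by writing $\E(v)-\Gamma(\xi)=\int_0^1\E'(v_\xi+t(\eta-\psi(\xi))).(\eta-\psi(\xi))\,dt$, bounding $\|\E'\|_{V^*}\leq M\|\delta\E\|_Z$ along the segment via \eqref{item:loja_lin_1}, and invoking the Lipschitz estimate once more.

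\emph{Step 3 (combination) and main obstacle.} Subadditivity of $t\mapsto t^{1-\theta}$ on $[0,\infty)$ and $\Gamma(0)=\E(v_0)$ give
\begin{align*}
|\E(v)-\E(v_0)|^{1-\theta}&\leq|\E(v)-\Gamma(\xi)|^{1-\theta}+|\Gamma(\xi)-\Gamma(0)|^{1-\theta}\\
&\leq C_3^{1-\theta}\,\|\delta\E(v)\|_Z^{2(1-\theta)}+C_1\|\Gamma'(\xi)\|_{N^*}\\
&\leq C_3^{1-\theta}\,\|\delta\E(v)\|_Z^{2(1-\theta)}+C_1C_2\,\|\delta\E(v)\|_Z,
\end{align*}
and on a small enough neighborhood $\|\delta\E(v)\|_Z\leq 1$, while $2(1-\theta)\geq 1$ because $\theta\leq\tfrac12$, so $\|\delta\E(v)\|_Z^{2(1-\theta)}\leq\|\delta\E(v)\|_Z$ and the claimed inequality follows with $C\vcentcolon=C_3^{1-\theta}+C_1C_2$. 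I expect the main obstacle to be the quadratic estimate \eqref{eq:plan-quad}: this is exactly the step where classical arguments invoke a gradient representation of $\E'$, whereas here the \emph{scalar} nature of $\E$ lets the fundamental theorem of calculus produce $\E'$, which \eqref{item:loja_lin_1} controls by $\delta\E$, and the closed complemented image \eqref{item:loja_lin_3} supplies the Lipschitz bound $\|\eta-\psi(\xi)\|_X\lesssim\|\delta\E(v)\|_Z$ that yields the extra power of $\|\delta\E(v)\|_Z$; the constraint $\theta\leq\tfrac12$ is precisely what makes this quadratic term absorb into the linear bound. The rest is routine bookkeeping of the nested smallness conditions on the neighborhoods.
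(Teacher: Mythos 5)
Your proposal is correct and follows essentially the same route as the paper's proof: Lyapunov--Schmidt reduction via the analytic implicit function theorem for $P\circ\delta\E$, the finite-dimensional \L{}ojasiewicz inequality on the reduced functional, the Lipschitz bound on the fiber component in terms of $\|\delta\E(v)\|_Z$, and the combination of the gradient and quadratic estimates in your Step~3. The only bookkeeping differences --- you invert the averaged Jacobian to obtain $\|\eta-\psi(\xi)\|_X\lesssim\|\delta\E(v)\|_Z$ where the paper invokes uniqueness of a two-parameter implicit map $g(x,w)$; and you integrate $\E'$ along the segment and then propagate the bound on $\|\delta\E\|_Z$ by Lipschitz continuity of $\delta\E$, where the paper Taylor-expands $\E$ around the ambient point so that only $\E'$ at that point and the quadratic remainder need controlling --- are immaterial to the argument.
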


Usually, see for instance \cite[Theorem 1.2]{rupp2020}, condition \eqref{item:loja_lin_1} is verified when the derivative $\E'$ can be represented by $\delta\E\colon V_0\subset V\to Z^*$ via continuous, linear maps $i_v\in \mathcal{L}(V,Z)$ for $v\in V_0$ in the form
\begin{align}\label{eq:rem_loja_1}
&\E'(v).\xi = (\delta \E)(v).(i_v\xi) \quad \text{ for }\xi\in V,\\
&\text{where } \Vert i_v\Vert_{\mathcal{L}(V,Z)} \leq M
\end{align}
for some $M<\infty$. Here $Z^*$ plays the role of $Z$ above. Clearly, \eqref{eq:rem_loja_1} implies \eqref{item:loja_lin_1}. Conditions \eqref{item:loja_lin_2} and \eqref{item:loja_lin_3} are satisfied if $(\delta\E)'(v_0)\colon V\to Z$ is a Fredholm operator of any (finite) index.

The conditions in \Cref{thm:loja-abstract-linear} are now well-behaved with respect to diffeomorphisms.
\begin{remark}\label{rem:loja}
\label{item:rem_loja_diffeo} For an analytic diffeomorphism, $\phi\colon \tilde{V}_0\subset V\overset{\approx}{\to}V_0$, we put $\tilde\E \vcentcolon = \E \circ \phi\colon \tilde V_0\to\R$, $\widetilde{\delta \E}\vcentcolon = (\delta \E)\circ\phi \colon \tilde V_0\to Z$ and see
\begin{align}
\tilde \E'(\tilde v) = \E'(\phi(\tilde v))\circ D\phi(\tilde v),
\end{align}
hence, since $\Vert D\phi(\tilde v)\Vert_{\mathcal{L}(V,V)}\leq C_\phi$ for all $\tilde v\in \tilde V_0$, after passing to a smaller $\tilde V_0$ if necessary, we find
\begin{align}
    \Vert \tilde \E'(\tilde v)\Vert_{V^*}\leq \Vert \E'(\phi(\tilde v)) \Vert_{V^*} \Vert D\phi(\tilde v)\Vert_{\mathcal{L}(V,V)} \leq C_\phi M \Vert \widetilde{\delta \E}(\tilde v)\Vert_Z,
\end{align}
which gives \eqref{item:loja_lin_1} for $\tilde \E$ and $\widetilde{\delta \E}$. Next, for $\tilde v_0 \vcentcolon = \phi^{-1}(v_0)$, we have
\begin{align}
    \widetilde{\delta\E}'(\tilde v_0) = (\delta \E)'(v_0)\circ D\phi(\tilde v_0).
\end{align}
As $D\phi(\tilde v_0)\colon V \overset{\approx}{\to}V$ is a linear isomorphism, we get
\begin{align}
&\ker \widetilde{\delta\E}'(\tilde v_0) = D \phi(\tilde v_0)^{-1} \ker (\delta \E)'(v_0),\\
&\Imag \widetilde{\delta
\E}'(\tilde v_0) = \Imag
(\delta\E)'(v_0) = W\subset Z,
\end{align}
which gives \eqref{item:loja_lin_2} and \eqref{item:loja_lin_3} for $\tilde
\E$ and $\widetilde{\delta\E}$. This shows the assumptions \eqref{item:loja_lin_1}, \eqref{item:loja_lin_2}, and \eqref{item:loja_lin_3} in \Cref{thm:loja-abstract-linear} remain true under an (analytic) diffeomorphic transformation.
\end{remark}

\begin{proof}[Proof of \Cref{thm:loja-abstract-linear}]
We follow \cite{chill2003} and denote by $C\in (0,\infty)$ a constant depending on $\E, \delta\E,M$ which is allowed to change from line to line. Without loss of generality, we assume $v_0=0$ and that $\delta \E(0) =0$, as otherwise  the inequality is immediate by continuity. 

By \eqref{item:loja_lin_2}, the kernel $X\vcentcolon = \ker (\delta\E)'(0) \subset V$ is finite dimensional, so there exists $Y\subset V$ closed such that $V=X\oplus Y$, cf.\ \cite[XV, Corollary 1.6]{lang1993}. Let $X(0),Y(0)$ be open neighborhoods of $0$ in $X$, $Y$ with $X(0)\times Y(0)\subset V_0$.  By \eqref{item:loja_lin_3}, there exists a continuous linear projection $P_W\colon Z\to W$, cf.\ \cite[XV, Corollary 1.6]{lang1993}.
Consider the analytic map
\begin{align}
\Phi\colon X(0)\times Y(0)\times W \to W, \quad\Phi(x,y,w)\vcentcolon = P_W(\delta \E(x,y)) - w.
\end{align}
By construction, we have that
\begin{align}
    \partial_y \Phi(0,0,0)= (\delta\E)'(0)\vert_Y \colon Y\to W 
\end{align}
is an isomorphism. Since $\Phi(0,0,0)= \delta\E(0)=0$, the implicit function theorem \cite[XIV,
Theorem 2.1]{lang1993} yields that, there exist sufficiently small neighborhoods $X(0), Y(0), W(0)$ in $X,Y,W$, and an analytic map $g\colon X(0)\times W(0)\to Y(0)$ such that
\begin{align}\label{eq:loja_lin_1}
    P_W (\delta \E(x,g(x,w))) = w \quad\text{ for all } x\in X(0),w\in W(0).
\end{align}
The analytic map $\E_0\colon X(0)\to\R, \E_0(x) \vcentcolon= \E(x,g(x,0))$ is defined on an open subset of a finite dimensional Banach space. By the finite dimensional {\L}ojasiewicz gradient inequality \cite{MR160856}, after passing to a smaller neighborhood $X(0)$, there exist $\theta\in (0,\frac{1}{2}]$, $C>0$ such that
\begin{align}
    |\E_0(x)-\E_0(0)|^{1-\theta} \leq C |\E'_0(x)| \quad \text{ for all }x\in X(0).
\end{align}
 For a basis $\{e_1,\dots,e_d\}$ of $X$, $\Vert e_i\Vert_V=1$, $i=1,\dots,d$, we compute
\begin{align}
|\partial_{e_i} \E_0(x) | &= |\E'(x,g(x,0)) . (e_i+\partial_{e_i} g(x,0))| \\
&\leq \Vert \E'(x,g(x,0))\Vert_{V^*} \Vert e_i + \partial_{e_i} g(x,0)\Vert_V \leq C\Vert \E'(x,g(x,0))\Vert_{V^*},
\end{align}
again passing to smaller $X(0)$ if necessary. Hence by the above
\begin{align}
    |\E(x,g(x,0)) - \E(0)|^{1-\theta}\leq C \Vert \E'(x,g(x,0))\Vert_{V^*}\quad \text{ for all }x\in X(0).\label{eq:E_0_Loja}
\end{align}
For $(x,y)$ close to $0$, we see that $w\vcentcolon = P_W(\delta\E)(x,y)$ is close to $0$ and, by \eqref{eq:loja_lin_1}, we have
\begin{align}
P_W(\delta\E)(x,g(x,w)) = w = P_W(\delta\E)(x,y).
\end{align}
By uniqueness of $g(x,w)$ for $x,y,w$ close to $0$, we get $y=g(x,w)$, and estimate
\begin{align}
\Vert y-g(x,0)\Vert_V= \Vert g(x,w)-g(x,0)\Vert_Y \leq C  \Vert w \Vert_W &= C\Vert P_W (\delta\E)(x,y)\Vert_W \\
&\leq C\Vert \delta \E(x,y)\Vert_Z.\label{eq:loja_lin_2}
\end{align}
By Taylor expansion of $\E$, we obtain
\begin{align}
&|\E(x,y)-\E(x,g(x,0))| = |\E'(x,y).(0, y-g(x,0)) + O(\Vert y-g(x,0)\Vert_V^2)|\\
&\quad \leq \Vert \E'(x,y)\Vert_{V^*} \Vert y-g(x,0)\Vert_V + C\Vert y-g(x,0)\Vert_V^2 \\
&\quad \leq C\Vert \delta\E(x,y)\Vert_Z^2,
\end{align}
using \eqref{item:loja_lin_1} in the last step. Combining with \eqref{eq:E_0_Loja}, we find
\begin{align}
|\E(x,y)-\E(0)|^{1-\theta} &\leq |\E(x,y)-\E(x,g(x,0))|^{1-\theta} + |\E(x,g(x,0))-\E(0)|^{1-\theta} \\
&\leq C \Vert \delta \E(x,y)\Vert_Z^{2(1-\theta)}+ C \Vert \E'(x,g(x,0))\Vert_{V^*}.
\end{align}
By the mean value theorem, \eqref{item:loja_lin_1}, and \eqref{eq:loja_lin_2}, 
for $x,y$ close to $0$, we get
\begin{align}
    \Vert \E'(x,g(x,0))\Vert_{V^*} &\leq \Vert \E'(x,y)\Vert_{V^*} + C\Vert y-g(x,0)\Vert_V\\
    &\leq C\Vert \delta\E(x,y)\Vert_Z.
\end{align}
Since $\theta\in (0,\frac12]$, for $x,y$ close to $0$, we conclude
\begin{align}
|\E(x,y)-\E(0)|^{1-\theta}\leq C\Vert \delta \E(x,y)\Vert_Z. &\qedhere
\end{align}
\end{proof}

The nonlinear version for functionals defined on open subsets of Banach manifolds is \Cref{intro-thm:loja-abstract} which we may now prove.

\begin{proof}[Proof of \Cref{intro-thm:loja-abstract}]
Let $\varphi\colon V_0\subset V\overset{\approx}{\to} U_0$ be an analytic inverse chart for $U_0\subset \mathcal{M}$ in a possibly smaller open neighborhood of $u_0$ with $V_0\subset V$ open. We verify that $\tilde \E\vcentcolon = \E \circ\varphi \colon V_0\to\R$ and $\widetilde{\delta \E}\vcentcolon = (\delta \E)\circ\varphi \colon V_0 \to Z$ satisfy conditions \eqref{item:loja_lin_1}, \eqref{item:loja_lin_2}, and \eqref{item:loja_lin_3} of \Cref{thm:loja-abstract-linear} in $v_0\vcentcolon = \phi^{-1}(u_0)\in V_0$. Indeed
\begin{align}
    \tilde \E'(v) = \E'(\varphi(v)) \circ D\varphi(v),
\end{align}
hence, for $v\in V_0$, 
after possibly reducing $V_0$, we have
\begin{align}
    \Vert \tilde \E'(v) \Vert_{V^*} & \leq \Vert \E'(\varphi(v))\Vert_{T_{\varphi(v)}\mathcal{M}^*} \Vert D\varphi(v)\Vert_{\mathcal{L}(V, T_{\varphi(v)}\mathcal{M})} \\
    &\leq C_\varphi M \Vert\delta \E(\varphi(v))\Vert_Z = C_\varphi M \Vert \widetilde{\delta\E}(v)\Vert_Z,
\end{align}
which gives \eqref{item:loja_lin_1} for $\tilde \E$ and $\widetilde{\delta \E}$. As
\begin{align}
(\widetilde{\delta \E})'(v_0) = (\delta\E)'(u_0)\circ D\varphi(v_0),
\end{align}
and since $D\varphi(v_0)\colon V\overset{\approx}{\to}T_{\varphi(v_0)}\mathcal{M}$ is a linear isomorphism, we get
\begin{align}
&\ker \widetilde{\delta\E}'(\tilde v_0) = D \phi(\tilde v_0)^{-1} \ker (\delta \E)'(v_0),\\
&\Imag \widetilde{\delta
\E}'(\tilde v_0) = \Imag
(\delta\E)'(v_0) = W\subset Z,
\end{align}
which gives \eqref{item:loja_lin_2} and \eqref{item:loja_lin_3} for $\tilde\E$ and $\widetilde{\delta\E}$.
\end{proof}

\section{The \L ojasiewicz--Simon  inequality for the Willmore energy of free boundary surfaces}\label{sec:loja_Willmore}

In the following, we fix $\Sigma$ a compact, connected, orientable, smooth surface with non-empty boundary $\partial\Sigma$.
Consider an analytic, orientable, and embedded surface $S$ in $\R^3$ without boundary. 

On $\Sigma$ we consider the pull-back metric $g_f=f^*\langle\cdot,\cdot\rangle$. For $x \in \partial \Sigma$, we work with a basis $\{\tau(x), \eta(x)\}$ of the tangent space $T_x \Sigma$ that is orthonormal (with respect to $g_f(x)$) and such that $\tau(x)$ is tangential to $\partial \Sigma$ at $x$ and $\eta(x)$ is the outer co-normal. If an immersion $f\colon \Sigma \to\R^3$ meets $S$ orthogonally along $\partial\Sigma$, cf.\ \eqref{intro-eq:fbc}, then
\begin{equation}
    \langle \frac{\partial f}{ \partial  \tau}, N^S \circ f \rangle\Big|_{\partial \Sigma}=0\quad\text{ and }\quad N^S \circ f = \pm \frac{\partial f}{ \partial  \eta}.
\end{equation}
In the following we choose the normal field $N^S$ such that  
\begin{equation} \label{eq:choiceNS}
    N^S \circ f = \frac{\partial f}{ \partial  \eta} .
\end{equation}

\begin{remark}\label{rem:geomS}
Since $S \subset \R^3$ is real analytic, by \cite[Theorem 1 in Sect. 2.12.3]{Simon1996} the maps
\begin{equation} S \ni y \to N^S(y) \quad \mbox{ and } 
\Psi_S\colon S \times \R \to \R^3, \quad \Psi_S(y,\rho)\vcentcolon=y+\rho N^S(y),
\end{equation} 
are then real analytic. We wish to restrict $\Psi_S$ to a tubular neighborhood. Since $S$ is not assumed to be compact, the tubular neighborhood does not necessarily have a uniform radius, in general. On the other hand, since in the rest of the paper we work with immersions whose image is contained in a fixed compact subset of $\R^3$, we can assume without loss of generality that such a uniform radius $\varepsilon_S>0$ does exist. In particular, 
\begin{equation}\label{eq:PsiS}
\Psi_S\colon S\times(-\varepsilon_S,\varepsilon_S)\to\mathcal{U}_S=\Psi_S(S\times(-\varepsilon_S,\varepsilon_S))
\end{equation}
is a diffeomorphism. Writing $(\Pi^S,d^S)\vcentcolon=\Psi_S^{-1}\colon\mathcal{U}_S\to S\times(-\varepsilon_S,\varepsilon_S)$, we say that $\Pi^S$ is the nearest point projection onto $S$ and 
\begin{align}
  \mathcal{U}_S \ni y \mapsto d^S(y) \in \R\label{eq:sign_dist},
\end{align}
is the (signed) distance function to $S$. Both of these maps are real-analytic.

We recall that for $y \in \mathcal{U}_S$, $v , w\in \R^3$
\begin{align}
D\Pi^S (y).v& = v - \langle v, N^S(\Pi^S(y)) \rangle N^S(\Pi^S(y))\\
\label{eq:abldistance}
        \nabla d^S (y)& =N^S(\Pi^S(y))  \mbox{ and}\\
        A^S_y(v,w) & = - \langle v, D(N^S \circ \Pi^S)(y) . w \rangle = - \langle v, \nabla^2 d^S(y). w \rangle, 
    \end{align}
    where we usually omit the dependence on $y$ and we have extended $A^S$ to $\mathcal U_S\times \R^3\times \R^3$ in the usual way.
\end{remark}

\begin{remark}\label{rem:dpartialsigma}
    Using Nash's embedding theorem, assume that $\Sigma\subset\R^M$ is a submanifold for some $M\in\N$ and fix $g_0=\iota^*\langle\cdot,\cdot\rangle$,  the pull-back with respect to the inclusion $\iota\colon\Sigma\to\R^M$. 
    Suppose that $r_0>0$ is sufficiently small such that the intrinsic nearest point projection $B_{r_0}(\partial\Sigma)\vcentcolon = \{ x\in \Sigma \mid d^{\partial\Sigma}_0(x)<r_0\}\to\partial\Sigma$ with respect to $g_0$ is well-defined and smooth. In particular, $d^{\partial \Sigma}_0\colon\Sigma\to[0,\infty)$, the intrinsic distance to $\partial\Sigma\subset\Sigma$ with respect to $(\Sigma,g_0)$, is smooth on $B_{r_0}(\partial\Sigma)$.
    
    Moreover, denote by $\mu_{g_0}$ the Riemannian measure with respect to the reference metric $g_0$ and by $\frac{\dd\mu}{\dd\mu_{g_0}}$ the Radon--Nikodym-density of any absolutely continuous measure $\mu$ on $\Sigma$ with respect to $\mu_{g_0}$.

    Furthermore, let $\eta_0\colon\partial\Sigma\to\S^{M-1}\subset\R^M$ denote the outer unit conormal with respect to $g_0$ and fix a unit tangent field $\tau_0$ of $\partial\Sigma$ such that $\{\eta_0(x),\tau_0(x)\}$ is an oriented $g_0$-orthonormal basis of $T_x\Sigma$ for all $x\in\partial\Sigma$. Let $R$ be the open set of $C^1$-Riemannian metrics on $\Sigma$. For any $g\in R$, we can derive the following formula for the outer unit co-normal $\eta_g$: 
    \begin{equation}\label{eq:eta_g-expl-form}
        \eta_g = \frac{g(\tau_0,\tau_0)\eta_0-g(\eta_0,\tau_0)\tau_0}{\sqrt{g(\eta_0,\eta_0)g(\tau_0,\tau_0)-g(\eta_0,\tau_0)^2}\sqrt{g(\tau_0,\tau_0)}}.
    \end{equation}
\end{remark}

\subsection{Existence of generalized Gaussian coordinates}

Fix now a smooth immersion $\bar{f}\colon\Sigma\to\R^3$ of $\Sigma$ satisfying \eqref{intro-eq:fbc}. 
Fix a family of open $\Sigma_j\subset\Sigma$ with $\Sigma=\bigcup_{j=1}^K\Sigma_j$ such that $\bar f|_{\Sigma_j}$ is an embedding. In this section, we prove the following

\begin{proposition}\label{prop:ex-gauss-coord}
    There exist $\alpha_0,\bar r>0$ and a smooth $\Phi\colon \Sigma\times(-\bar r,\bar r)\to \R^3$ such that each $\Phi|_{\Sigma_j\times(-\bar r,\bar r)}\colon\Sigma_j\times(-\bar r,\bar r)\to\mathcal U_j\vcentcolon=\Phi(\Sigma_j\times(-\bar r,\bar r))$ is a diffeomorphism
    and 
    \begin{enumerate}[(i)]
        \item $\Phi(x,0)=\bar{f}(x)$ for all $x\in  \Sigma$. 
        \item\label{item:xinormal} Denote $\xi\colon \Sigma\times(-\bar r,\bar r)\to \R^3$, $\xi \vcentcolon = \partial_r \Phi$. Then, we have 
        \begin{equation}\label{eq:controlxi}
        \xi(x,0)=\nu_{\bar{f}}(x) \mbox{ on } \partial \Sigma, \quad \quad \frac12\leq \langle\xi(\cdot,0),\nu_{\bar{f}}\rangle\leq 2 \mbox{ on }\Sigma.
        \end{equation}
        \item \label{item:PhivaluesS} $\Phi(x,r)\in S$ for all $x\in\partial \Sigma$, $r\in(-\bar{r},\bar{r})$, and $\Phi(x,r)\in \mathcal{U}_S$ for all $x\in B_{\alpha_0}(\partial\Sigma)$ and $r\in (-\bar r, \bar r)$;
        \item\label{item:ex-gauss-coord_3}        
        For all local inverse charts $\psi\colon\Omega\overset{\approx}{\to}\psi(\Omega)\subset\Sigma$, $\Omega\subset \R\times[0,\infty)$ (relatively) open, $\psi\in C^\infty(\bar \Omega)$, and any $\beta \in \N_0^2$, $(x,r)\mapsto \partial_x^{\beta}(\Phi(\psi(x),r))$ is analytic in $r$, uniformly with respect to $x\in\Omega$, see \Cref{sec:analycompo}. The same is true for $\partial_x^{\beta}\partial_r(\Phi(\psi(x),r))$.
    \end{enumerate}
\end{proposition}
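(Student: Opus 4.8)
The plan is to construct $\Phi$ explicitly as the time-$r$ flow of a vector field $\xi$ on $\R^3$ whose restriction to $\bar f(\Sigma)$ is (an extension of) the Gauss map $\nu_{\bar f}$, following Stahl's construction \cite{stahl1996}, but making all choices analytic enough to guarantee item \eqref{item:ex-gauss-coord_3}. First I would fix, near $\partial\Sigma$, a smooth vector field $\xi_0$ along $\bar f$ with $\xi_0 = \nu_{\bar f}$ on $\partial\Sigma$ and, crucially, $\xi_0$ tangent to $S$ along $\bar f(\partial\Sigma)$: this is where the orthogonality condition $\langle\nu_{\bar f},N^S\circ\bar f\rangle = 0$ in \eqref{intro-eq:fbc} is used, since it says $\nu_{\bar f}\in T S$ along $\bar f(\partial\Sigma)$, so $\xi_0 = \nu_{\bar f}$ on $\partial\Sigma$ is already tangent to $S$ there. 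Away from a neighborhood of $\partial\Sigma$ I would simply take $\xi_0 = \nu_{\bar f}$. Then I would extend $\xi_0$ off $\bar f(\Sigma)$ to a vector field $X$ on a neighborhood of $\bar f(\Sigma)$ in $\R^3$ which, near $S$, is \emph{tangent to all the level sets $\{d^S = c\}$} of the (analytic, by \Cref{rem:geomS}) signed distance function; concretely one can project a smooth extension via $D\Pi^S$ so that $\langle X, \nabla d^S\rangle \equiv 0$ near $S$, cf.\ \eqref{eq:abldistance}. Because $d^S$ and $\Pi^S$ are analytic, this modification near $S$ preserves analyticity in the directions we will need.

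Next I would set $\Phi(x,r) \vcentcolon= \Theta_r(\bar f(x))$, where $\Theta_r$ is the flow of $X$. The properties then follow:
\begin{enumerate}[(i)]
    \item is immediate since $\Theta_0 = \mathrm{id}$.
    \item holds because $\xi(x,0) = \partial_r\Phi(x,0) = X(\bar f(x)) = \xi_0(x)$, which equals $\nu_{\bar f}(x)$ on $\partial\Sigma$ by construction; the two-sided bound $\frac12\le\langle\xi(\cdot,0),\nu_{\bar f}\rangle\le 2$ on $\Sigma$ holds near $\partial\Sigma$ by continuity (it equals $1$ on $\partial\Sigma$) and equals $1$ away from $\partial\Sigma$, after shrinking the neighborhood where $\xi_0\neq\nu_{\bar f}$ and using compactness of $\Sigma$.
    \item For $x\in\partial\Sigma$ we have $\bar f(x)\in S$ and $X$ is tangent to $S$ along $S$, so the flow line stays in $S$: $\Phi(x,r)\in S$. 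For $x\in B_{\alpha_0}(\partial\Sigma)$ with $\alpha_0$ small, $\bar f(x)$ lies in the tubular neighborhood $\mathcal U_S$, and since $X$ is tangent to the level sets of $d^S$ near $S$, the quantity $d^S(\Phi(x,r))$ is constant in $r$ (for $r$ and $\alpha_0$ small enough that the flow line stays in $\mathcal U_S$), hence $\Phi(x,r)\in\mathcal U_S$; here I shrink $\bar r$ and $\alpha_0$ using compactness and the flow estimate $|\Phi(x,r)-\bar f(x)|\le Cr$.
    \item Fix a smooth local inverse chart $\psi$. Then $\Phi(\psi(x),r) = \Theta_r(\bar f(\psi(x)))$, and for each fixed $x$ this is the solution of the ODE $\partial_r y = X(y)$, $y(0) = \bar f(\psi(x))$. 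Since $X$ is \emph{analytic} — on the bulk $\bar f(\Sigma)\setminus(\text{nbhd of }\partial\Sigma)$ we took $X$ built from $\nu_{\bar f}$, and near $S$ from the analytic data $d^S,\Pi^S,N^S$ — wait, that is not quite enough: $X$ is only analytic near $S$, not globally. The fix is that analyticity in $r$ of $\Phi(\psi(x),r)$ only requires $X$ analytic on a neighborhood of the \emph{image} of the relevant flow lines, and we only claim $r$-analyticity (uniformly in $x$), not $x$-analyticity. So I would instead require $X$ to be analytic \emph{as a function of its spatial argument on all of $\R^3$} in a small neighborhood of $\bar f(\Sigma)$; this can be arranged by choosing the smooth extension of $\xi_0$ in the bulk to be analytic (e.g.\ polynomial, via Stone--Weierstrass/analytic approximation of the smooth normal field, retaining the matching with the analytic near-$S$ piece through a partition of unity with analytic... ) — actually the clean route is: build $X$ globally as an analytic vector field on $\R^3$ agreeing with the required boundary behavior near $S$ up to the needed order, which suffices. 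Granting an analytic $X$, the flow map $(y_0,r)\mapsto\Theta_r(y_0)$ is jointly analytic by the Cauchy--Kovalevskaya / analytic-dependence theorem for ODEs. Composing with the smooth map $x\mapsto\bar f(\psi(x))$ and differentiating in $x$: $\partial_x^\beta(\Phi(\psi(x),r))$ is a finite sum of terms, each a partial derivative of $\Theta$ in $y_0$ evaluated at $\bar f(\psi(x))$ times derivatives of $\bar f\circ\psi$; each $y_0$-derivative of $\Theta$ is again analytic in $r$, uniformly for $y_0$ in the (compact) relevant set, so $\partial_x^\beta(\Phi(\psi(x),r))$ and likewise $\partial_x^\beta\partial_r(\Phi(\psi(x),r))$ are analytic in $r$ uniformly in $x\in\Omega$, as made precise in \Cref{sec:analycompo}.
\end{enumerate}

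The diffeomorphism property of $\Phi|_{\Sigma_j\times(-\bar r,\bar r)}$ follows from the inverse function theorem: at $r=0$ the differential of $\Phi(\cdot,\cdot)$ at $(x,0)$ sends $(v,s)\mapsto D\bar f(x)v + sX(\bar f(x))$, which is an isomorphism onto $\R^3$ because $\bar f|_{\Sigma_j}$ is an immersion (indeed an embedding) and $X(\bar f(x)) = \xi_0(x)$ has positive component along $\nu_{\bar f}$ by \eqref{eq:controlxi}, hence is transverse to $D\bar f(x)(T_x\Sigma)$; so $\Phi|_{\Sigma_j\times(-\bar r,\bar r)}$ is a local diffeomorphism for $\bar r$ small, and injectivity on each $\Sigma_j\times(-\bar r,\bar r)$ (after shrinking $\bar r$) follows from a standard compactness/tubular-neighborhood argument since $\bar f|_{\Sigma_j}$ is injective.

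\textbf{Main obstacle.} The delicate point is item \eqref{item:ex-gauss-coord_3}: one must arrange that the generating vector field $X$ is genuinely \emph{analytic} (at least in the spatial variable, on a full neighborhood of $\bar f(\Sigma)$ in $\R^3$), while simultaneously forcing the non-analytic-looking matching condition at $\partial\Sigma$, namely $X = \nu_{\bar f}$ on $\bar f(\partial\Sigma)$ together with tangency to the level sets of $d^S$ near $S$. Reconciling "analytic on a neighborhood" with "prescribed boundary behavior coming from $\bar f$, which is only smooth" requires care: one cannot naively glue with a smooth cutoff. The resolution is to observe that analyticity is needed only in $r$ (uniformly in $x$), which is governed purely by the $r$-ODE $\partial_r y = X(y)$, so it suffices that $X$ be analytic on a neighborhood of the union of the flow lines; the boundary conditions on $X$ only need to hold along $\bar f(\partial\Sigma)$ (a lower-dimensional set), where they are compatible with analyticity of $X$ precisely because the orthogonality condition in \eqref{intro-eq:fbc} makes $\nu_{\bar f}|_{\partial\Sigma}$ agree with an analytic field (a section of $TS$, with $S$ analytic). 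Thus the construction proceeds: take $X$ analytic on $\R^3$ near $\bar f(\Sigma)$ in the bulk, and near $S$ replace it by its $D\Pi^S$-projection (analytic, by \Cref{rem:geomS}) to kill the $\nabla d^S$-component; the two agree to sufficient order along $\bar f(\partial\Sigma)$ by the choice of $\xi_0$. This is exactly the explicit construction alluded to in the introduction ("we thus need to consider a more explicit construction as in \cite{stahl1996} and require $S$ to be analytic"), and the uniform-in-$x$ $r$-analyticity is then a consequence of analytic dependence of ODE solutions on initial conditions, deferred to \Cref{sec:analycompo}.
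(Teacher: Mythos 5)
Your strategy — generate $\Phi$ as the time-$r$ flow of a spatial vector field $X$ on $\R^3$ extending $\nu_{\bar f}$ and tangent to the level sets of $d^S$ — is genuinely different from the paper's construction, and the difference is exactly where your argument breaks down. The paper does \emph{not} use a flow. It writes $\Phi$ as a convex combination $\Phi(x,r)=(1-\zeta(d_0^{\partial\Sigma}(x)))\Phi_{\bar f}(x,r)+\zeta(d_0^{\partial\Sigma}(x))\Phi_S(x,r)$, where $\Phi_{\bar f}(x,r)=\bar f(x)+r\nu_{\bar f}(x)$ is \emph{affine in $r$} and $\Phi_S$ is a composition of $\Phi_{\bar f}$ with the analytic maps $\Pi^S, d^S, N^S$. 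Because the cutoff $\zeta\circ d_0^{\partial\Sigma}$ depends only on $x$ and not on $r$, the sum is trivially analytic in $r$ for each fixed $x$, uniformly in $x$; the smoothness of $\bar f$ (and hence of $\nu_{\bar f}$) is simply irrelevant to $r$-analyticity, since $\bar f(x)$ and $\nu_{\bar f}(x)$ appear only as constant coefficients. This is the one insight your proposal is missing: you do not need $X$ to be analytic in space, because you do not need a spatial vector field at all.

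The gap this leaves in your argument is not cosmetic. To get $r$-analyticity from the ODE $\partial_r y=X(y)$, you must have $X$ analytic on a spatial neighborhood of the trajectories. But $X$ is constrained to equal $\nu_{\bar f}$ on $\bar f(\partial\Sigma)$, and $\nu_{\bar f}$ is only $C^\infty$ (since $\bar f$ is merely smooth). There is no reason an analytic $X$ can match $\nu_{\bar f}$ on a curve, and your claim that the orthogonality condition in \eqref{intro-eq:fbc} makes $\nu_{\bar f}\vert_{\partial\Sigma}$ ``agree with an analytic field'' because $S$ is analytic is not justified: $\nu_{\bar f}\vert_{\partial\Sigma}$ is a section of $TS$, but sections of an analytic bundle need not be analytic. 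Your proposed fix (``build $X$ globally as an analytic vector field \dots agreeing with the required boundary behavior near $S$ up to the needed order'') runs into precisely the obstruction you name two sentences later, and you never close it. Items \eqref{item:xinormal}, \eqref{item:PhivaluesS}, and the local-diffeomorphism claim would go through in your framework, but item \eqref{item:ex-gauss-coord_3} — which the paper highlights as the whole point of the explicit construction — does not. You should discard the flow picture and instead observe, as the paper does, that an $x$-only cutoff between the affine-in-$r$ map $\Phi_{\bar f}$ and the $S$-adapted map $\Phi_S$ is automatically $r$-analytic.
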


In order to ensure that $\Phi(x,r) \in S$ for $x \in \partial \Sigma$, we need to adapt the usual parametrization of the tubular neighborhoods of $\bar f(\Sigma_j)$. Further, in view of the \L ojasiewicz-Simon inequality \Cref{intro-thm:loja-abstract}, we need to give special attention to ensuring analyticity of the mappings involved in the construction. Consider
\begin{equation}\label{eq:def-Phi-D}
\Phi_{\bar f}\colon \Sigma \times \R \to \R^3, \quad  \Phi_{\bar f}(x,r)\vcentcolon=\bar{f}(x)+r\nu_{\bar{f}}(x)  ,
\end{equation}
with $\nu_{\bar{f}}$ the normal field of $\bar{f}$.
Then, there exists $\bar\varepsilon>0$ such that 
\begin{equation}
\Phi_{\bar f}\colon \Sigma_j\times(-\bar\varepsilon,\bar\varepsilon)\to \mathcal{U}_{\bar f,j}\vcentcolon =\Phi_{\bar f}(\Sigma_j\times(-\bar\varepsilon,\bar\varepsilon))
\end{equation}
is a diffeomorphism for all $1\leq j\leq K$. Using Remark \ref{rem:geomS}, for $(x,r)\in \Phi_{\bar f}^{-1}(\mathcal{U}_S)$ with $\bar{f}(x)\in\mathcal{U}_S$, define
\begin{equation}
    \Phi_S(x,r)\vcentcolon=\Pi^S(\Phi_{\bar f}(x,r)) + d^S(\bar{f}(x)) N^S(\Pi^S(\Phi_{\bar f}(x,r))).
\end{equation}
Using that $\bar{f}$ satisfies \eqref{intro-eq:fbc}, there exists $\Tilde{\varepsilon}_S\in(0,r_0)$ with $r_0$ as in \Cref{rem:dpartialsigma} satisfying 
\begin{equation}\label{eq:etildaS}
    \bar{f}(B_{\tilde \varepsilon_S}(\partial\Sigma))\subset\mathcal{U}_S   .
\end{equation}
Consider arbitrary $\alpha_0>0$ with
$0<2\alpha_0<\tilde\varepsilon_S$. Due to the choice of $\Tilde{\varepsilon}_S$,  
and by the uniform continuity of $\Phi_{\bar f}$, there exists $\tilde r_{\bar f}(\alpha_0)\in (0,\bar\varepsilon)$ satisfying
\begin{equation}\label{eq:choice-rD}
  B_{2\alpha_0}(\partial\Sigma)\times (-\tilde r_{\bar f}(\alpha_0),\tilde r_{\bar f}(\alpha_0)) \subset \Phi_{\bar f}^{-1}(\mathcal{U}_S).
\end{equation}
In particular, with $r=0$, we have $\Phi_{\bar f}(x,0)=\bar{f}(x)\in\mathcal U_S$ for all $x\in B_{2\alpha_0}(\partial \Sigma)$.

For $\alpha_0$ as above, let $\zeta\in C^\infty(\R)$ be nonincreasing with
\begin{equation}\label{eq:def-zeta}
    \chi_{(-\infty,\alpha_0)}\leq \zeta\leq \chi_{(-\infty,2\alpha_0)}
    \quad\text{and}\quad |\zeta'|\leq \frac{2}{\alpha_0}\quad\text{on $\R$}.
\end{equation}Altogether, we define $\Phi\colon \Sigma\times (-\tilde r_{\bar f}(\alpha_0),\tilde r_{\bar f}(\alpha_0))\to\R^3$,
\begin{equation}\label{eq:def-Phi}
    \Phi(x,r)\vcentcolon=(1-\zeta(d^{\partial\Sigma}_0(x)))\Phi_{\bar f}(x,r) + \zeta(d^{\partial\Sigma}_0(x))\Phi_S(x,r).
\end{equation}
Then $\Phi$ is well-defined, smooth, and satisfies
\begin{align}
    \Phi &= \Phi_S \mbox{ in } B_{\alpha_0}(\partial\Sigma)\times(-\tilde r_{\bar f}(\alpha_0),\tilde r_{\bar f}(\alpha_0)), \\
    \Phi&= \Phi_{\bar f}\mbox{ in } \big(\Sigma\setminus B_{2\alpha_0}(\partial\Sigma)\big)\times(-\tilde r_{\bar f}(\alpha_0),\tilde r_{\bar f}(\alpha_0)).\label{eq:Phi-bdry-values}
\end{align}

We now show the restrictions $\Phi|_{\Sigma_j\times(-\bar r,\bar r)}\colon \Sigma_j\times(-\bar r,\bar r)\to\mathcal{U}_j$ are diffeomorphisms for suitable $\alpha_0$ and $\bar r$. The first step is

\begin{lemma}\label{lem:C1-perturb-est-Phi}
    In the above construction, for any $\varepsilon \in (0,\frac12)$, there exists $\alpha_0$ with $0<2\alpha_0<\tilde\varepsilon_S$ and $\bar r\in (0,\tilde r_{\bar f}(\alpha_0))$ such that
    \begin{equation}
        \|\Phi-\Phi_{\bar f} \|_{C^1(\Sigma\times(-\bar r,\bar r))} < \varepsilon.
    \end{equation}
\end{lemma}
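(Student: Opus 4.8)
The plan is to use that, by \eqref{eq:def-Phi} and \eqref{eq:Phi-bdry-values},
\begin{equation}
\Phi-\Phi_{\bar f}=\zeta(d_0^{\partial\Sigma})\,(\Phi_S-\Phi_{\bar f})
\end{equation}
is supported in $B_{2\alpha_0}(\partial\Sigma)\times(-\bar r,\bar r)$, so it suffices to make $\Phi_S-\Phi_{\bar f}$ small there, with $\bar r$ allowed to depend on $\alpha_0$. First I would freeze a fixed compact neighborhood $K$ of $\partial\Sigma\times\{0\}$ in $\Sigma\times\R$ on which $\Phi_S$, and hence $\Phi_S-\Phi_{\bar f}$, is well-defined and smooth; this is possible by \eqref{eq:etildaS} and \eqref{eq:choice-rD}, and I would then only admit $\alpha_0,\bar r$ small enough that the support $B_{2\alpha_0}(\partial\Sigma)\times(-\bar r,\bar r)\subset K$. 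All constants $C_1,C_2,\dots$ below are bounds of fixed smooth functions on $K$, hence independent of $\alpha_0$ and $\bar r$, and the $C^1$-norms are understood with respect to the fixed reference metric $g_0$ of \Cref{rem:dpartialsigma}.

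The argument rests on two elementary observations. First, $(\Phi_S-\Phi_{\bar f})(\cdot,0)\equiv 0$ on the $\Sigma$-slice of $K$: indeed $\Phi_S(x,0)=\Pi^S(\bar f(x))+d^S(\bar f(x))N^S(\Pi^S(\bar f(x)))=\bar f(x)=\Phi_{\bar f}(x,0)$, using $\bar f(x)\in\mathcal{U}_S$ and the definition of $\Pi^S,d^S$ from \Cref{rem:geomS}. Second, $h(x)\vcentcolon=\partial_r(\Phi_S-\Phi_{\bar f})(x,0)$ vanishes on $\partial\Sigma$: differentiating in $r$ at $r=0$ and using the formulas for $D\Pi^S$ and $D(N^S\circ\Pi^S)$ in \Cref{rem:geomS},
\begin{equation}
h(x)=-\langle\nu_{\bar f}(x),N^S(\Pi^S(\bar f(x)))\rangle\,N^S(\Pi^S(\bar f(x)))+d^S(\bar f(x))\,D(N^S\circ\Pi^S)(\bar f(x)).\nu_{\bar f}(x),
\end{equation}
and for $x\in\partial\Sigma$ the first term vanishes by the orthogonality condition $\langle\nu_{\bar f},N^S\circ\bar f\rangle=0$ in \eqref{intro-eq:fbc}, while the second vanishes since $\bar f(\partial\Sigma)\subset S$ forces $d^S(\bar f(x))=0$. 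As $h$ is smooth on a compact neighborhood of $\partial\Sigma$ and vanishes on $\partial\Sigma$, the quantity $\omega(\alpha_0)\vcentcolon=\sup_{B_{2\alpha_0}(\partial\Sigma)}|h|$ satisfies $\omega(\alpha_0)\to 0$ as $\alpha_0\to 0^+$.

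Next I would estimate the three contributions to the $C^1$-norm on the support. From the first observation and Taylor's theorem in $r$ (differentiating the identity $(\Phi_S-\Phi_{\bar f})(\cdot,0)\equiv 0$ in the $\Sigma$-directions where needed), the values and the $\Sigma$-derivative of $\Phi_S-\Phi_{\bar f}$ are $O(|r|)$, say $\le C_1|r|$ and $\le C_2|r|$; from the second observation and Taylor's theorem, $|\partial_r(\Phi_S-\Phi_{\bar f})(x,r)|\le\omega(\alpha_0)+C_3|r|$. Combining via the product rule, together with $0\le\zeta\le 1$ and $|\zeta'|\le 2/\alpha_0$ from \eqref{eq:def-zeta} and a fixed bound $|\nabla d_0^{\partial\Sigma}|\le C_4$ on $K$ (note $\partial_r$ annihilates $\zeta(d_0^{\partial\Sigma})$), one reaches an estimate of the form
\begin{equation}
\|\Phi-\Phi_{\bar f}\|_{C^1(\Sigma\times(-\bar r,\bar r))}\le\Big(C_1+C_2+C_3+\tfrac{2C_1C_4}{\alpha_0}\Big)\bar r+\omega(\alpha_0).
\end{equation}
Given $\varepsilon\in(0,\tfrac12)$, I would then first choose $\alpha_0$ with $0<2\alpha_0<\tilde\varepsilon_S$ so small that $\omega(\alpha_0)<\varepsilon/2$, and afterwards choose $\bar r\in(0,\tilde r_{\bar f}(\alpha_0))$ (shrunk further, if necessary, so the support lies in $K$) small enough that the now-fixed coefficient times $\bar r$ is less than $\varepsilon/2$.

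The only genuinely nontrivial point is the vanishing of $h$ on $\partial\Sigma$: this is precisely where the free boundary conditions \eqref{intro-eq:fbc} on $\bar f$ — both $\bar f(\partial\Sigma)\subset S$ and the orthogonality $\langle\nu_{\bar f},N^S\circ\bar f\rangle=0$ — enter, and without the resulting smallness of $\partial_r(\Phi_S-\Phi_{\bar f})$ near $\partial\Sigma$ the $\zeta'$-term could not be controlled at all. The rest is a routine interplay between the first-order vanishing of $\Phi_S-\Phi_{\bar f}$ at $r=0$, which defeats the $1/\alpha_0$ blow-up of $\zeta'$, and the smallness of the boundary defect $\omega(\alpha_0)$.
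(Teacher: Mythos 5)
Your proof is correct and takes essentially the same approach as the paper: both rest on the support of $\Phi-\Phi_{\bar f}$ being confined to $B_{2\alpha_0}(\partial\Sigma)\times(-\bar r,\bar r)$, on the free boundary conditions forcing the relevant $r$-derivative to vanish at $\partial\Sigma\times\{0\}$, and on the $O(|r|)$ vanishing at $r=0$ to cancel the $1/\alpha_0$ blow-up of $\zeta'$. The paper organizes this around the scalar factorization $\Phi_S-\Phi_{\bar f}=(d^S\circ\bar f-d^S\circ\Phi_{\bar f})\,N^S\circ\Pi^S\circ\Phi_{\bar f}$, makes $\langle N^S\circ\Pi^S\circ\Phi_{\bar f},\nu_{\bar f}\rangle<\varepsilon$ by uniform continuity and integrates it, whereas you work with the vector $\Phi_S-\Phi_{\bar f}$ directly, Taylor-expand in $r$, and encode the smallness near $\partial\Sigma$ in the modulus $\omega(\alpha_0)$ — a cosmetic rather than substantive difference.
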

\begin{proof}
    We fix the value of $\alpha_0$ later in the proof. First, we observe that $\Phi_{\bar f}(x,r) \in \mathcal{U}_S$ for $x\in B_{2\alpha_0}(\partial\Sigma)$, $|r|< \tilde{r}_{\bar f}(\alpha_0)$ by \eqref{eq:choice-rD} and hence, with \Cref{rem:geomS},
    \begin{align}
        \Phi_{\bar f}(x,r) &= \Psi_S (\Pi^S(\Phi_{\bar f}(x,r)),d^S(\Phi_{\bar f}(x,r))) \\ 
        &= \Pi^S(\Phi_{\bar f}(x,r)) + d^S(\Phi_{\bar f}(x,r)) N^S(\Pi^S(\Phi_{\bar f}(x,r))).\label{eq:detprojS}
    \end{align}
    Therefore, for $x\in \Sigma$ and $|r|<\tilde r_{\bar f}(\alpha_0)$,
    \begin{align}
        \Phi(x,r)&=\Phi_{\bar f}(x,r) + \zeta(d^{\partial\Sigma}_0(x))\big(\Phi_S(x,r)-\Phi_{\bar f}(x,r)\big)\\
        &=\Phi_{\bar f}(x,r) + \zeta(d^{\partial\Sigma}_0(x))\big(d^S(\bar{f}(x))-d^S(\Phi_{\bar f}(x,r))\big)N^S(\Pi^S(\Phi_{\bar f}(x,r))).\label{eq:dec-of-Phi}
    \end{align}
    In view of \eqref{eq:dec-of-Phi}, we first show that there exist $\alpha_0$ and $\bar r''<\tilde r_{\bar f}(\alpha_0)$ such that
    \begin{equation}
        \|d^S\circ \bar{f}-d^S\circ \Phi_{\bar f}\|_{C^1(B_{2\alpha_0}(\partial\Sigma)\times(-\bar r'',\bar r''))} < C \varepsilon\label{eq:C1-perturb-est-Phi}
    \end{equation}
    for some universal constant $C$. Using \eqref{intro-eq:fbc},
    \begin{equation}
        \langle N^S\circ\Pi^S\circ\Phi_{\bar f},\nu_{\bar{f}}\rangle \Big|_{(x,r)\in \partial \Sigma\times\{0\}} = \langle N^S\circ\bar{f},\nu_{\bar{f}}\rangle \Big|_{\partial \Sigma} = 0.
    \end{equation}
    By uniform continuity, there exist $\alpha_0\in(0,\frac12\Tilde{\varepsilon}_S)$ and $\bar r'\in (0,\min\{\bar \varepsilon,\tilde r_{\bar f}(\alpha_0)\})$ such that
    \begin{equation}\label{eq:choice-of-rD'}
        \|\langle N^S\circ\Pi^S\circ\Phi_{\bar f},\nu_{\bar{f}}\rangle\|_{C^0(B_{2\alpha_0}(\partial\Sigma)\times(-\bar{r}',\bar{r}'))} < \varepsilon.
    \end{equation}
    By the fundamental theorem of calculus and \eqref{eq:abldistance}, for $(x,r) \in B_{2\alpha_0}(\partial \Sigma) \times (-\bar{r}',\bar{r}')$
    \begin{align}
        d^S&(\bar{f}(x))-d^S(\Phi_{\bar f}(x,r)) = d^S(\bar{f}(x)) - d^S(\bar{f}(x)+r\nu_{\bar{f}}(x))  \\
        &= -\int_0^r \frac{\dd }{\dd \rho} d^S(\bar{f}(x)+\rho\nu_{\bar{f}}(x))\dd \rho = -\int_0^r\langle N^S(\Pi^S(\Phi_{\bar f}(x,\rho))),\nu_{\bar{f}}(x)\rangle \dd\rho\label{eq:ftc}
    \end{align}
    So, clearly, 
    \begin{equation}\label{eq:est-dsbarf-to-dsphid}
        |d^S(\bar{f}(x))-d^S(\Phi_{\bar f}(x,r))|\leq \varepsilon r \quad \text{whenever $d^{\partial\Sigma}_0(x)<2\alpha_0$ and $|r|<\bar r'$},
    \end{equation}
    using \eqref{eq:choice-of-rD'}. 
    Moreover, combining \eqref{eq:choice-of-rD'} and \eqref{eq:ftc}, 
    \begin{align}
        \|\partial_r \big(d^S\circ \bar{f} -d^S\circ \Phi_{\bar f}\big)\|_{C^0(B_{2\alpha_0}(\partial\Sigma)\times(-\bar r',\bar r'))} < \varepsilon.
    \end{align}
    Letting
    \begin{equation}
        C''\vcentcolon=\|\nabla_{g_0}\langle N^S\circ \Pi^S\circ \Phi_{\bar f},\nu_{\bar{f}}\rangle \|_{C^0(B_{2\alpha_0}(\partial\Sigma)\times(-\bar r',\bar r'))} < \infty
    \end{equation}
    and choosing $\bar r''\in(0,\min\{\varepsilon/C'',\bar r'\})$, \eqref{eq:ftc} yields
    \begin{equation}
        \|\nabla_{g_0} \big(d^S\circ \bar{f} -d^S\circ \Phi_{\bar f}\big)\|_{C^0(B_{2\alpha_0}(\partial\Sigma)\times(-\bar r'',\bar r''))} < \varepsilon.
    \end{equation}
    Together, \eqref{eq:C1-perturb-est-Phi} follows. Using \eqref{eq:def-zeta}
    and \Cref{rem:dpartialsigma}, we estimate
    \begin{equation}
        \|\zeta\circ d^{\partial\Sigma}_0\|_{C^1(\Sigma)}\leq \frac{C(g_0)}{\alpha_0}.
    \end{equation} 
    Since furthermore $\|N^S\circ\Pi^S\circ\Phi_{\bar f}\|_{C^1(\Phi_{\bar f}^{-1}(\mathcal{U}_S))}<\infty$, there exists $\bar r\in (0,\bar r'')$, depending on $\varepsilon$ and $\alpha_0$, so that the claim follows with \eqref{eq:dec-of-Phi}, \eqref{eq:C1-perturb-est-Phi} and \eqref{eq:est-dsbarf-to-dsphid}.
\end{proof}

\begin{proof}[Proof of \Cref{prop:ex-gauss-coord}.]

Since the set of diffeomorphisms is open in the $C^1$-topology, \Cref{lem:C1-perturb-est-Phi} (with $\varepsilon>0$ sufficiently small) yields that each $\Phi|_{\Sigma_j\times(-\bar r,\bar r)}$ is a diffeomorphism onto its image, where $\Phi$ is defined as in \eqref{eq:def-Phi}.

Statement (i) is immediate from the definition.

With \eqref{eq:def-Phi-D}, \eqref{eq:dec-of-Phi} and (i), for all $x\in \Sigma$
\begin{align}
\xi(x,0)=\left.\partial_r \Phi(x,r)\right|_{r=0} & =  \nu_{\bar{f}}(x)- \zeta(d^{\partial\Sigma}_0(x)) \left. \partial_r\right|_{r=0}\Big[ d^S(\Phi_{\bar f}(x,r))\Big]N^S(\Pi^S(\bar{f}(x))) \\
& =  \nu_{\bar{f}}(x) - \zeta(d^{\partial\Sigma}_0(x)) \langle N^S(\Pi^S(\bar{f}(x))), \nu_{\bar{f}}(x) \rangle N^S(\Pi^S(\bar{f}(x)))\label{eq:xi-circ-barf}
\end{align}
since,  by \eqref{eq:abldistance}, $\partial_r \Big[d^S(\Phi_{\bar f}(x,r))\Big] = \langle N^S(\Pi^S(\Phi_{\bar f}(x,r))), \nu_{\bar{f}}(x) \rangle $ for $x\in B_{2\alpha_0}(\partial\Sigma)$ and  $|r|< \tilde{r}_{\bar f}(\alpha_0)$. In particular, $\xi(x,0)=\nu_{\bar{f}}(x)$ on $\partial \Sigma$ follows using \eqref{intro-eq:fbc}. Furthermore, since $0<\varepsilon<\frac12$ in \Cref{lem:C1-perturb-est-Phi}, by \eqref{eq:choice-of-rD'}, we have $\langle\xi(\cdot,0),\nu_{\bar{f}}\rangle \in (\frac12,2)$ on $\Sigma$, that is \eqref{eq:controlxi}.

Next, we verify property (iii). Using \eqref{eq:Phi-bdry-values}, for all $x\in B_{\alpha_0}(\partial\Sigma)$ and $|r|<\bar r$ we have 
$$\mathrm{dist}(\Phi(x,r),S)=\mathrm{dist}(\Phi_S(x,r),S)\leq d^S(\bar f(x))$$ so that \eqref{eq:etildaS} yields $\Phi(x,r)\in \mathcal{U}_S$. In particular, $\Phi(x,r)\in S$ for $x\in\partial\Sigma$ since $d^S(\bar{f}(x))=0$ by \eqref{intro-eq:fbc}.

It remains to verify (iv). 
The map $\Phi_{\bar f}(\psi(x),r)$ is linear in $r$ and hence real-analytic in $r$. The uniformity in $x$ follows by smoothness of $\Phi_{\bar f}$ and considering the map on 
$\bar\Omega \times [-\bar r,\bar r]$. By \Cref{rem:geomS}, since the composition of analytic maps is again analytic\footnote{see \cite[Theorem~2.2.8]{KrantzParks}. Here we use also \cite[Proposition~2.2.10]{KrantzParks} and Remark \ref{rem:B2}.}, we see that also $\Phi_S(\psi(x),r)$ is analytic in $r$, uniformly in $x$ by smoothness of the mappings involved and considering an appropriate compact set. Since the map $\zeta$ is smooth, the mapping $\Phi(\psi(x),r)$ is analytic in $r$, uniformly in $x$. One argues similarly for $\partial_x^{\alpha}\Phi(\psi(x),r)$ and $\partial_x^{\alpha}\partial_r\Phi(\psi(x),r)$. Altogether, (iv) is verified.
\end{proof}

\begin{lemma}\label{lem:def-fw}
    For
    \begin{equation}
        U_0\vcentcolon=\{w\in W^{4,2}(\Sigma,\R)\mid \|w\|_{C^0(\Sigma)}<\bar r\},
    \end{equation}
    the mapping $U_0\to W^{4,2}_{\mathrm{imm}}(\Sigma,\R^3)$, $w\mapsto f_w\vcentcolon= \Phi(\cdot,w(\cdot))$ is analytic and
    \begin{equation}\label{eq:0th-order-fbc}
        f_w(\partial \Sigma)\subset S\quad\text{for all $w\in U_0$}.
    \end{equation}
    Moreover, there exists $a\in (0,\bar r)$ such that, for $U_1\vcentcolon=\{w\in U_0:\|w\|_{C^1(\Sigma)}<a\}$,
    \begin{equation}\label{eq:estimate-xi0fw-in-nu-fw}
        \frac14\leq \langle\xi(\cdot,w(\cdot)),\nu_{f_w}\rangle\leq 4\quad\text{on $\Sigma$}\quad\text{for all $w\in U_1$}.
    \end{equation}
\end{lemma}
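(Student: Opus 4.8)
The plan is to prove the three assertions of \Cref{lem:def-fw} separately, the analyticity being the substantial one. Throughout, I would use the Sobolev embedding $W^{4,2}(\Sigma)\hookrightarrow C^2(\Sigma)$, valid since $\dim\Sigma=2$; in particular $U_0$ is then open in $W^{4,2}(\Sigma,\R)$, and each $w\in U_0$ satisfies $w(x)\in(-\bar r,\bar r)$ for every $x\in\Sigma$.

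I would dispatch the boundary condition and the immersion property first. For $x\in\partial\Sigma$ we have $w(x)\in(-\bar r,\bar r)$, hence $f_w(x)=\Phi(x,w(x))\in S$ by property~\eqref{item:PhivaluesS} of \Cref{prop:ex-gauss-coord}, which is \eqref{eq:0th-order-fbc}. For the immersion property I would write, on each $\Sigma_j$, $f_w|_{\Sigma_j}=\bigl(\Phi|_{\Sigma_j\times(-\bar r,\bar r)}\bigr)\circ(\mathrm{id}_{\Sigma_j},w|_{\Sigma_j})$, i.e.\ the composition of the graph map $x\mapsto(x,w(x))$ (an immersion since $w$ is $C^1$) with the diffeomorphism $\Phi|_{\Sigma_j\times(-\bar r,\bar r)}$ onto the open set $\mathcal U_j\subset\R^3$; as a composition of an immersion with a local diffeomorphism this is an immersion, and since $\Sigma=\bigcup_j\Sigma_j$ it follows that $f_w$ is an immersion. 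Thus $f_w\in W^{4,2}_{\mathrm{imm}}(\Sigma,\R^3)$ as soon as $f_w\in W^{4,2}(\Sigma,\R^3)$, which I expect to come out of the analyticity step.

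For analyticity, the key point I would exploit is that, although $\Phi$ is only jointly smooth in $(x,r)$, the variable $w$ enters $f_w=\Phi(\cdot,w(\cdot))$ only through the slot $r$, in which $\Phi$ and its spatial derivatives up to order four depend analytically on $r$, uniformly in $x$, by property~\eqref{item:ex-gauss-coord_3} of \Cref{prop:ex-gauss-coord}. In a finite atlas of charts $\psi_\ell$ as in \eqref{item:ex-gauss-coord_3}, this yields, on each compact $r$-interval $[-c,c]\subset(-\bar r,\bar r)$, constants $\rho>0$ and $M<\infty$ with $\sup_x\bigl|\partial_x^{\beta}\partial_r^{k}(\Phi(\psi_\ell(x),r))\bigr|\le M\,k!\,\rho^{-k}$ for all $k\in\N_0$ and $|\beta|\le 4$. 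Fixing $w_0\in U_0$, choosing $\delta>0$ with $\|w_0\|_{C^0}+\delta<\bar r$, and Taylor expanding $\Phi(x,\cdot)$ about $w_0(x)$, I would obtain, pointwise, $f_{w_0+v}(x)=\sum_{k\ge 0}\tfrac{1}{k!}(\partial_r^k\Phi)(x,w_0(x))\,v(x)^k$ for $v\in W^{4,2}(\Sigma,\R)$ with $\|v\|_{W^{4,2}}$ small. I would then check that each term is a bounded symmetric $k$-linear map $W^{4,2}(\Sigma,\R)^k\to W^{4,2}(\Sigma,\R^3)$: by the chain rule, the factorial bounds, and $w_0\in W^{4,2}\hookrightarrow C^2$, the coefficient $x\mapsto(\partial_r^k\Phi)(x,w_0(x))$ lies in $W^{4,2}(\Sigma,\R^3)$ with norm $\le C(w_0)\,k!\,\rho_0^{-k}$ for some $\rho_0>0$, while $W^{4,2}(\Sigma)$ is a Banach algebra (again since $\dim\Sigma=2$), so $\|v^k\|_{W^{4,2}}\le C_{\mathrm{alg}}^{k-1}\|v\|_{W^{4,2}}^k$. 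The series then converges in $W^{4,2}(\Sigma,\R^3)$ for $\|v\|_{W^{4,2}}$ small, its sum equals $f_{w_0+v}$ (the $W^{4,2}$- and pointwise-a.e.\ limits agree because $W^{4,2}$-convergence implies $C^0$-convergence), and hence $v\mapsto f_{w_0+v}$ is given near $0$ by a convergent power series of bounded multilinear maps; so $w\mapsto f_w$ is analytic at $w_0$, and, $w_0\in U_0$ being arbitrary, on all of $U_0$. In particular $f_w\in W^{4,2}(\Sigma,\R^3)$. Alternatively, I would simply invoke the abstract analytic-composition result of \Cref{sec:analycompo}, whose two hypotheses — uniform analyticity in $r$ from \eqref{item:ex-gauss-coord_3} and the Banach algebra property of $W^{4,2}(\Sigma)$ — are exactly what is available here.

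Finally, for \eqref{eq:estimate-xi0fw-in-nu-fw} I would start from $\tfrac12\le\langle\xi(\cdot,0),\nu_{\bar f}\rangle\le 2$ on $\Sigma$, which is \eqref{eq:controlxi}, and show that $\langle\xi(\cdot,w(\cdot)),\nu_{f_w}\rangle\to\langle\xi(\cdot,0),\nu_{\bar f}\rangle$ uniformly on $\Sigma$ as $\|w\|_{C^1(\Sigma)}\to 0$; then one can fix $a\in(0,\bar r)$ so small that the $C^0(\Sigma)$-distance of these two functions is below $\tfrac14$ whenever $\|w\|_{C^1(\Sigma)}<a$, which gives $\tfrac14\le\langle\xi(\cdot,w(\cdot)),\nu_{f_w}\rangle\le 4$. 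For the uniform convergence: $\xi=\partial_r\Phi$ is uniformly continuous on the compact set $\Sigma\times[-a,a]$, so $\xi(\cdot,w(\cdot))\to\xi(\cdot,0)$ in $C^0(\Sigma)$ as $\|w\|_{C^0}\to 0$; and in any fixed chart $\partial_i f_w=(\partial_i\Phi)(\cdot,w(\cdot))+\xi(\cdot,w(\cdot))\,\partial_i w$, whose second summand tends to $0$ uniformly (since $\xi(\cdot,w(\cdot))$ stays bounded and $\partial_i w\to 0$ uniformly) and whose first summand tends to $(\partial_i\Phi)(\cdot,0)=\partial_i\bar f$ uniformly, so $\partial_i f_w\to\partial_i\bar f$ uniformly; as $\bar f$ is an immersion and $\Sigma$ is covered by finitely many charts, $|\partial_1 f_w\times\partial_2 f_w|$ stays bounded away from $0$ for $\|w\|_{C^1}$ small and $\nu_{f_w}=(\partial_1 f_w\times\partial_2 f_w)/|\partial_1 f_w\times\partial_2 f_w|\to\nu_{\bar f}$ uniformly. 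Combining the two convergences yields the claim. The step I expect to be the main obstacle is the analyticity, and within it the bookkeeping for bounding the $k$-th multilinear term of the superposition operator in the $W^{4,2}$-norm; what makes it work are precisely the uniform-in-$x$ analyticity in $r$ from property~\eqref{item:ex-gauss-coord_3} of \Cref{prop:ex-gauss-coord} (giving the factorial bounds) and the Banach algebra property of $W^{4,2}$ in dimension two.
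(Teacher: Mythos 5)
Your proposal is correct and follows essentially the same route as the paper: the boundary condition from \Cref{prop:ex-gauss-coord}\eqref{item:PhivaluesS}, the immersion property from the local-diffeomorphism structure of $\Phi$ (the paper phrases this as a rank computation on the block matrix $D\Phi\cdot(\mathrm{Id};Dw)^{\!\top}$, which is the same as your graph-map-plus-diffeomorphism argument), and the bound \eqref{eq:estimate-xi0fw-in-nu-fw} by continuity of $w\mapsto\langle\xi(\cdot,w(\cdot)),\nu_{f_w}\rangle$ in $C^1\to C^0$ starting from \eqref{eq:controlxi}. For analyticity, the paper simply invokes \Cref{thm:analytic_composition,lem:analit-local} (with $m=n=4$, $p=2$) together with \Cref{prop:ex-gauss-coord}\eqref{item:ex-gauss-coord_3} — exactly the ``alternatively'' you offer — while your primary hands-on Taylor/Banach-algebra argument is a spelled-out version of what those appendix results package, resting on the same two inputs (uniform-in-$x$ analyticity in $r$ and the algebra property of $W^{4,2}$ in dimension two).
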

Here and in the following, the Sobolev and Lebesgue spaces are understood with respect to the reference metric $g_0$ in \Cref{rem:dpartialsigma}.

\begin{proof}[Proof of \Cref{lem:def-fw}]
    The analyticity of $U_0\to W^{4,2}(\Sigma)$, $w\mapsto f_w$ follows from  \Cref{thm:analytic_composition,lem:analit-local} (with $m=n=4, p=2$) using \Cref{prop:ex-gauss-coord}\eqref{item:ex-gauss-coord_3}. For $x\in \Sigma$, we have in local coordinates $x=(x_1,x_2)$
    \begin{equation}
        Df_w(x) = D\Phi(x,w(x)) \cdot \begin{pmatrix}
            \mathrm{Id}_{2\times 2}\\ Dw(x) 
        \end{pmatrix}.
    \end{equation}
    Since, locally, $\Phi$ is a diffeomorphism, $D\Phi(x,w(x))$ is invertible so that 

        $\mathrm{rank}\ Df_w(x) 
        = 2.$
    Therefore, $f_w$ is again an immersion for each $w\in U_0$. Equation~\eqref{eq:0th-order-fbc} is a consequence of \Cref{prop:ex-gauss-coord}\eqref{item:PhivaluesS}. 
    
    Clearly, $C^1(\Sigma)\supseteq\{w\in W^{4,2}(\Sigma):\|w\|_{C^0(\Sigma)}<\bar r\}\ni w\mapsto \langle \xi(\cdot,w(\cdot)),\nu_{f_w}\rangle\in C^0(\Sigma)$ is continuous. Thus \eqref{eq:estimate-xi0fw-in-nu-fw} follows from \eqref{eq:controlxi} 
    choosing $a\in (0,\bar r)$ sufficiently small.
\end{proof}

\subsection{Rewriting the free boundary problem in Gaussian coordinates}

As in the previous section, we fix a smooth immersion $\bar{f}\colon\Sigma\to\R^3$ of $\Sigma$ satisfying \eqref{intro-eq:fbc}. Choose 
\begin{equation}\label{eq:spaces}
    V\vcentcolon=W^{4,2}(\Sigma), \quad Z\vcentcolon=L^2(\Sigma) \quad\text{ and }\quad Y\vcentcolon= W^{\frac52,2}(\partial \Sigma)\times W^{\frac12,2}(\partial \Sigma) .
\end{equation}
For $w \in U_1\subset V$, neighborhood of $0$ as in \Cref{lem:def-fw}, we write $f_w=\Phi(\cdot, w(\cdot))$ with $\Phi$ as in \Cref{prop:ex-gauss-coord} and recall that $f_{w=0}=\bar{f}$. 
Now, define 
\begin{equation}\label{eq:def-E}
    \E\colon U_1\to\R, \quad \E(w)\vcentcolon= \W(f_w)
\end{equation}
and 
\begin{equation}\label{eq:gradientWillmore}
    \delta\E\colon U_1\to Z, \quad  \delta\E(w)\vcentcolon=\frac{1}{\langle \xi(\cdot,w(\cdot)),\nu_{f_w}\rangle}\Big[\Delta_{g_{f_w}} H_{f_w} + |A^0_{f_w}|^2H_{f_w}\Big].
\end{equation}
The factor in front of the usual Willmore gradient, which is uniformly bounded by \eqref{eq:estimate-xi0fw-in-nu-fw}, is introduced here to simplify some calculations. 
For the {\L}ojasiewicz--Simon inequality \Cref{intro-thm:loja-abstract}, real analyticity is a key property. Since multilinear and bounded maps are analytic, the same is true for many geometric quantities.

\begin{lemma}\label{lem:analytic}
The following maps are analytic.
\begin{enumerate}[(i)]
\item\label{item:analytic_2} $U_1\to W^{3,2}(\Sigma;\R^3), w\mapsto \nu_{f_w}$;
\item\label{item:analytic_6} $U_1\to W^{3,2}(\Sigma;\R), w\mapsto (\langle \xi(\cdot,w(\cdot)),\nu_{f_w}\rangle)^{-1}$;
\item\label{item:analytic_3} $U_1\to W^{2,2}(\Sigma), w\mapsto H_{f_w}$;
\item\label{item:analytic_4} $\delta \mathcal{E}\colon U_1\to L^{2}(\Sigma)$;
\item \label{item:analytic_5} $\mathcal{E}\colon U_1\to\R$.
\end{enumerate}
\end{lemma}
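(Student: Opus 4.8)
Here is how I would prove \Cref{lem:analytic}.

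The strategy is to bootstrap everything from the single analyticity statement of \Cref{lem:def-fw}, namely that $U_1\ni w\mapsto f_w\in W^{4,2}(\Sigma;\R^3)$ is analytic, by combining two elementary principles: bounded multilinear maps between Banach spaces are analytic, and — via \Cref{thm:analytic_composition} together with \Cref{lem:analit-local} — post-composing an analytic $W^{k,2}(\Sigma)$-valued map whose pointwise range is contained in a fixed compact interval avoiding the singularities of a real-analytic scalar function again yields an analytic $W^{k,2}(\Sigma)$-valued map, for every $k\ge2$ (so that $W^{k,2}(\Sigma)$ is a Banach algebra, using $\dim\Sigma=2$). All quantities in question are given by coordinate formulas, so I would work in finitely many positively oriented charts and glue by a partition of unity. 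Since $\partial^\beta$ is bounded linear, hence analytic, $w\mapsto\partial_if_w\in W^{3,2}$ and $w\mapsto\partial^2_{ij}f_w\in W^{2,2}$ are analytic; moreover $w\mapsto\xi(\cdot,w(\cdot))\in W^{3,2}$ is analytic by \Cref{prop:ex-gauss-coord}\eqref{item:ex-gauss-coord_3} and the same composition result. The product rules I shall use repeatedly are that in dimension two $W^{3,2}(\Sigma)\hookrightarrow C^1(\Sigma)$ and that multiplication $W^{3,2}(\Sigma)\times W^{\ell,2}(\Sigma)\to W^{\ell,2}(\Sigma)$ is bounded bilinear for $0\le\ell\le3$ (with $W^{0,2}=L^2$).

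For \eqref{item:analytic_2} I would write $\nu_{f_w}=(\partial_1f_w\times\partial_2f_w)/\sqrt{\det g_{f_w}}$ with $\det g_{f_w}=|\partial_1f_w\times\partial_2f_w|^2$. The cross product is bounded bilinear $W^{3,2}\times W^{3,2}\to W^{3,2}$, so $w\mapsto\partial_1f_w\times\partial_2f_w$ and $w\mapsto\det g_{f_w}$ are analytic into $W^{3,2}$; the $f_w$ are immersions uniformly on $U_1$ (after shrinking $U_1$, using the $C^1$-smallness of $w$ and compactness of $\Sigma$), so $\det g_{f_w}\ge c>0$ pointwise, whence $\sqrt{\det g_{f_w}}$ and its reciprocal are analytic into $W^{3,2}$ by the composition principle, and multiplying gives \eqref{item:analytic_2}. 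For \eqref{item:analytic_6}, $\langle\xi(\cdot,w(\cdot)),\nu_{f_w}\rangle$ is the bounded bilinear image of two analytic $W^{3,2}$-valued maps, hence analytic into $W^{3,2}$, and by \eqref{eq:estimate-xi0fw-in-nu-fw} it lies pointwise in $[\tfrac14,4]$, so composing with $t\mapsto t^{-1}$ yields \eqref{item:analytic_6}.

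For \eqref{item:analytic_3}: $g_{ij,f_w}=\langle\partial_if_w,\partial_jf_w\rangle$ is analytic into $W^{3,2}$, hence so is $g^{ij}_{f_w}=(\det g_{f_w})^{-1}\,\mathrm{cof}(g_{f_w})_{ij}$ (reciprocal of $\det g_{f_w}$, as above); $A_{ij,f_w}=\langle\partial^2_{ij}f_w,\nu_{f_w}\rangle$ is the bounded bilinear image $W^{2,2}\times W^{3,2}\to W^{2,2}$ of analytic maps, hence analytic into $W^{2,2}$; and $H_{f_w}=g^{ij}_{f_w}A_{ij,f_w}$ is analytic into $W^{2,2}$. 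For \eqref{item:analytic_4} I would decompose $\delta\E(w)$ as in \eqref{eq:gradientWillmore}. In the zeroth-order term, $A^0_{ij,f_w}=A_{ij,f_w}-\tfrac12H_{f_w}g_{ij,f_w}$ is analytic into $W^{2,2}$, hence so are $|A^0_{f_w}|^2=g^{ik}_{f_w}g^{jl}_{f_w}A^0_{ij,f_w}A^0_{kl,f_w}$ and then $|A^0_{f_w}|^2H_{f_w}$, landing in $W^{2,2}\hookrightarrow L^2$. For the fourth-order term I would write $\Delta_{g_{f_w}}H_{f_w}=(\det g_{f_w})^{-1/2}\,\partial_i\big((\det g_{f_w})^{1/2}g^{ij}_{f_w}\,\partial_jH_{f_w}\big)$: here $\partial_jH_{f_w}$ is analytic into $W^{1,2}$, multiplying by the analytic $W^{3,2}$-valued factor $(\det g_{f_w})^{1/2}g^{ij}_{f_w}$ stays in $W^{1,2}$, applying $\partial_i$ lands in $L^2$, and multiplying by $(\det g_{f_w})^{-1/2}\in W^{3,2}\hookrightarrow L^\infty$ stays in $L^2$, all analytically. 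Multiplying the resulting bracket by $(\langle\xi(\cdot,w(\cdot)),\nu_{f_w}\rangle)^{-1}\in W^{3,2}\hookrightarrow L^\infty$ preserves $L^2$, giving \eqref{item:analytic_4}. Finally, for \eqref{item:analytic_5}, $\E(w)=\tfrac14\int_\Sigma H_{f_w}^2(\det g_{f_w})^{1/2}\,\dd\mu_{g_0}$; the integrand is a product of analytic $W^{2,2}$-valued maps, hence analytic into $L^1(\Sigma)$, and integration $L^1(\Sigma)\to\R$ is bounded linear, so $\E$ is analytic.

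The only genuinely delicate point is the two-dimensional Sobolev bookkeeping — in particular that $W^{1,2}(\Sigma)$ is \emph{not} a Banach algebra, so in the fourth-order term the first-order factor $\partial_jH_{f_w}$ must always be paired with a factor in $W^{3,2}(\Sigma)\hookrightarrow C^1(\Sigma)$ — together with verifying that the geometric denominators $\det g_{f_w}$ and $\langle\xi(\cdot,w(\cdot)),\nu_{f_w}\rangle$ stay uniformly bounded away from zero on $U_1$, so that the reciprocal- and square-root-type compositions really fall under \Cref{thm:analytic_composition}. Once these are in place the rest is routine.
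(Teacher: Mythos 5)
Your proof is correct, but it takes a genuinely different route from the paper's for parts \eqref{item:analytic_3}--\eqref{item:analytic_5}. The paper does not rederive the analyticity of the Willmore-gradient terms from coordinate formulas; instead it cites \cite[Lemma~3.2]{chillfasangovaschaetzle2009} for the analyticity of the \emph{vector-valued} maps $f\mapsto \tfrac{\dd\mu_f}{\dd\mu_{g_0}}$ (into $C^0$), $f\mapsto H_f\nu_f$ (into $W^{2,2}$), and $f\mapsto(\Delta_{g_f}H_f+|A^0_f|^2H_f)\nu_f$ (into $L^2$), and then recovers the scalar quantities by pairing with $\nu_{f_w}$ (which is in $W^{3,2}\hookrightarrow L^\infty$ by part \eqref{item:analytic_2}), using bounded bilinear pairings $W^{2,2}\times W^{3,2}\to W^{2,2}$ and $L^2\times L^\infty\to L^2$. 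You instead reprove everything from scratch: you expand $H$, $|A^0|^2$, and $\Delta_g H$ in local coordinates (using the divergence form of the Laplace--Beltrami operator), track the Sobolev regularity of each factor, and glue by a partition of unity via \Cref{lem:analit-local}. Both approaches rest on the same ingredients --- analyticity of $w\mapsto f_w$ from \Cref{lem:def-fw}, the composition principle of \Cref{thm:analytic_composition}/\Cref{lem:analit-local}, the lower bound \eqref{eq:estimate-xi0fw-in-nu-fw}, and boundedness of the relevant Sobolev multiplications in dimension two --- so your proof is sound, and it has the advantage of being self-contained, at the cost of length. One thing you should make explicit in your write-up (which the paper absorbs into the cited lemma) is the uniform lower bound $\det g_{f_w}\ge c>0$ on $U_1$: this follows from $\|w\|_{C^1}<a$ together with the invertibility of $D\Phi$ on the compact set where $\Phi$ is a diffeomorphism, since $|Df_w(x)\cdot v|\ge \|D\Phi(x,w(x))^{-1}\|^{-1}|v|$. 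For part \eqref{item:analytic_2}, your decomposition into $\partial_1 f_w\times\partial_2 f_w$ and $(\det g_{f_w})^{-1/2}$ is slightly different from the paper's, which composes $\partial_1 f_w\times\partial_2 f_w$ directly with the analytic map $y\mapsto y/|y|$ on $\R^3\setminus\{0\}$; they are equivalent since $|\partial_1 f\times\partial_2 f|^2=\det g$.
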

\begin{proof} 
By \Cref{lem:def-fw}, $U_1\to W^{4,2}_{\mathrm{imm}}(\Sigma;\R^3), w\mapsto f_w$ is analytic. 
For \eqref{item:analytic_2}, we consider local positive coordinates $x=(x^1,x^2)\colon U\overset{\approx}{\to}\Omega$ with $U\subset \Sigma$ and $\Omega\subset\R\times[0,\infty)$ (relatively) open. Since it is linear and bounded, 
$W^{4,2}(\Omega;\R^3)\to W^{3,2}(\Omega;\R^3), f\mapsto \partial_i f$ is analytic for $i=1,2$, and thus so is $W^{4,2}(\Omega;\R^3)\to W^{3,2}(\Omega;\R^3)$, $f\mapsto \partial_1f \times \partial_2 f$. The map $\R^3\setminus \{0\} \to \R^3, y\mapsto \frac{y}{|y|}$ is analytic. By \Cref{lem:analit-local} and the local representation of the normal, cf.\ \eqref{eq:def_normal}, it follows that $U_1\to W^{3,2}(\Sigma;\R^3), w\mapsto \nu_{f_w}$ is analytic. 

Moreover, by the arguments in \cite[Lemma 3.2]{chillfasangovaschaetzle2009}, the following maps are analytic.
    \begin{align}  
    &W^{4,2}_{\mathrm{imm}}(\Sigma;\R^3) \to C^0(\Sigma), f\mapsto \frac{\dd\mu_f}{\dd \mu_{g_0}}\text{ with $\mu_{g_0}$ as in \Cref{rem:dpartialsigma}} \\
    &W^{4,2}_{\mathrm{imm}}(\Sigma;\R^3) \to W^{2,2}(\Sigma;\R^3), f\mapsto H_f \nu_f\\
        &W^{4,2}_{\mathrm{imm}}(\Sigma;\R^3) \to L^{2}(\Sigma;\R^3), f\mapsto (\Delta_{g_f} H_f + |A^0_f|^2 H_f)\nu_f. \label{eq:analytic_CFS}
    \end{align}
    Since they are multilinear and bounded, the multiplications
    \begin{align}
        &W^{4,2}(\Sigma;\R^3)\times  W^{3,2}(\Sigma;\R^3) \to W^{3,2}(\Sigma), (h_1,h_2)\mapsto \langle h_1,h_2\rangle\\
        &W^{2,2}(\Sigma;\R^3)\times  W^{3,2}(\Sigma;\R^3) \to W^{2,2}(\Sigma), (h_1,h_2)\mapsto \langle h_1,h_2\rangle\\
        & L^2(\Sigma;\R^3)\times L^\infty(\Sigma;\R^3) \to L^2(\Sigma), (h_1,h_2)\mapsto \langle h_1,h_2\rangle
    \end{align}
    are analytic. Using \Cref{prop:ex-gauss-coord}\eqref{item:ex-gauss-coord_3} and \Cref{thm:analytic_composition,lem:analit-local} (with $m=n=4$, $p=2$), the mapping $U_1\to W^{4,2}(\Sigma,\R^3)$, $w\mapsto (\partial_r\Phi)(\cdot,w(\cdot)) = \xi(\cdot,w(\cdot))$ is analytic. Combined with part \eqref{item:analytic_2}, also $U_1\to W^{3,2}(\Sigma)$, $w\mapsto \langle \xi(\cdot,w(\cdot)),\nu_{f_w}\rangle$ is analytic. Finally, due to \eqref{eq:estimate-xi0fw-in-nu-fw}, part \eqref{item:analytic_6} follows, since $(0,\infty)\ni t\mapsto t^{-1}$ is analytic.
    
    Parts \eqref{item:analytic_3} and \eqref{item:analytic_4} thus follow from \eqref{item:analytic_2} and \eqref{eq:analytic_CFS}, recalling \eqref{eq:gradientWillmore} and part \eqref{item:analytic_6}. Part \eqref{item:analytic_5} follows from \eqref{item:analytic_3} and \eqref{eq:analytic_CFS}.
\end{proof}

\begin{lemma}\label{lem:reformulation-first-order-bdry-condition}
    Denote by $\vec{\mu}=(\vec\mu^{(1)},\vec\mu^{(2)})\colon \partial \Sigma\times(-\bar r,\bar r)\to T\Sigma\times\R$ the smooth mapping with
    \begin{equation}\label{eq:vectormu}
        \vec{\mu}(x,r)\vcentcolon=(d\Phi_{(x,r)})^{-1}.N^S(\Phi(x,r))
    \end{equation}
    the representation of $N^S$ in generalized Gaussian coordinates $(x,r)$. 
    
    Then $f=f_w$ with $w\in W^{4,2}(\Sigma,(-\bar r,\bar r))$ satisfies the second condition in \eqref{intro-eq:fbc} if and only if    \begin{equation}
        \partial_{\vec\mu^{(1)}(x,w(x))}w(x)-\vec\mu^{(2)}(x,w(x))=0\quad\text{for all $x\in\partial\Sigma$}.
    \end{equation}
\end{lemma}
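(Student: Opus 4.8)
The plan is to turn the orthogonality condition into a pointwise linear‑algebra statement at each boundary point, exploiting that $\Phi$ is a local diffeomorphism and that the tangent plane of $f_w$ is the $d\Phi$‑image of a graph.

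Fix $x\in\partial\Sigma$. Since $x\in\Sigma_j$ for some $j$ and $\Phi|_{\Sigma_j\times(-\bar r,\bar r)}$ is a diffeomorphism by \Cref{prop:ex-gauss-coord}, the differential $d\Phi_{(x,w(x))}\colon T_x\Sigma\oplus\R\to\R^3$ is a linear isomorphism; moreover $f_w(\partial\Sigma)\subset S$ by \Cref{lem:def-fw}, so the second condition in \eqref{intro-eq:fbc} at $x$ reads simply $\langle\nu_{f_w}(x),N^S(f_w(x))\rangle=0$. Writing $f_w=\Phi\circ(\mathrm{id},w)$, the chain rule gives $df_w(x).X=d\Phi_{(x,w(x))}.\bigl(X,\,\partial_X w(x)\bigr)$ for $X\in T_x\Sigma$, so the tangent plane $df_w(x)(T_x\Sigma)$ is the image under $d\Phi_{(x,w(x))}$ of the graph $\Gamma_x\vcentcolon=\{(X,\partial_X w(x)):X\in T_x\Sigma\}\subset T_x\Sigma\oplus\R$. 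By definition of $\nu_{f_w}$ in \eqref{eq:def_normal}, the line $\R\,\nu_{f_w}(x)$ is precisely the orthogonal complement of $df_w(x)(T_x\Sigma)$ in $\R^3$, so a vector $v\in\R^3$ satisfies $\langle\nu_{f_w}(x),v\rangle=0$ if and only if $v\in df_w(x)(T_x\Sigma)$.

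Applying this with $v=N^S(f_w(x))$ and using $N^S(f_w(x))=N^S(\Phi(x,w(x)))=d\Phi_{(x,w(x))}.\vec\mu(x,w(x))$ from \eqref{eq:vectormu}, the orthogonality condition at $x$ is equivalent, after applying the isomorphism $(d\Phi_{(x,w(x))})^{-1}$, to $\vec\mu(x,w(x))\in\Gamma_x$, i.e. to $\bigl(\vec\mu^{(1)}(x,w(x)),\vec\mu^{(2)}(x,w(x))\bigr)$ lying on the graph of $X\mapsto\partial_X w(x)$. Since $\Gamma_x$ is a graph over $T_x\Sigma$, this happens exactly when $\vec\mu^{(2)}(x,w(x))=\partial_{\vec\mu^{(1)}(x,w(x))}w(x)$, that is, $\partial_{\vec\mu^{(1)}(x,w(x))}w(x)-\vec\mu^{(2)}(x,w(x))=0$. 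As every step is an equivalence holding for each fixed $x\in\partial\Sigma$, the lemma follows.

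All steps are essentially routine. The only points needing a little care are the invertibility of $d\Phi_{(x,w(x))}$ at boundary points (covered by \Cref{prop:ex-gauss-coord}) and the interpretation of the trace $\partial_{\vec\mu^{(1)}(\cdot,w(\cdot))}w$ on $\partial\Sigma$; since $W^{4,2}(\Sigma)\hookrightarrow C^2(\Sigma)$ in dimension two, $Dw$ is continuous up to $\partial\Sigma$ and all pointwise expressions are well defined. I do not anticipate a genuine obstacle.
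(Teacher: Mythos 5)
Your proof is correct, and it takes a cleaner, more geometric route than the paper. The paper's proof computes the inner product $\langle\nu_{f_w},N^S\circ f_w\rangle$ by introducing the adjoint $d\Phi^*_{(x,w(x))}$ with respect to the product metric $g_0\otimes\dd r$, and then verifies (via orthogonality to the coordinate vectors $\partial_{e_i}f_w$) that $d\Phi^*_{(x,w(x))}.\nu_{f_w}(x)$ is a nonzero multiple of $(\nabla_{g_0}w(x),-1)$; the claimed equivalence then drops out of the resulting identity $\langle\nu_{f_w},N^S\rangle=\lambda\bigl(\partial_{\vec\mu^{(1)}}w-\vec\mu^{(2)}\bigr)$. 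Your argument sidesteps the adjoint entirely: you observe that orthogonality of $N^S$ to $\nu_{f_w}$ is equivalent to $N^S(f_w(x))$ lying in the tangent plane $df_w(x)(T_x\Sigma)$, transport this through the isomorphism $d\Phi_{(x,w(x))}$, and identify the preimage of the tangent plane as the graph $\Gamma_x=\{(X,\partial_Xw(x))\}$. The two arguments are dual to each other --- the paper works with the normal covector in the source, you work with the tangent plane --- and are of comparable length, but yours is coordinate-free and makes the equivalence transparent without the auxiliary computation verifying that $(\nabla_{g_0}w,-1)$ is the right normal. Both approaches rely on the same essential inputs: that $d\Phi_{(x,w(x))}$ is invertible and that $f_w=\Phi\circ(\mathrm{id},w)$ together with the chain rule identifies $df_w(T_x\Sigma)$ with $d\Phi(\Gamma_x)$.
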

\begin{proof}   
    For $f=f_w$, we compute
    \begin{align}
        \langle &\nu_{f_w}(x),N^S(f_w(x))\rangle =\langle \nu_{f_w}(x),d\Phi_{(x,w(x))}. \vec\mu(x,w(x))\rangle\\
        &= g_0\Big((d\Phi_{(x,w(x))}^*\big)^{(1)}.\nu_{f_w}(x),\vec\mu^{(1)}(x,w(x))\Big) + (d\Phi_{(x,w(x))}^*\big)^{(2)}.\nu_{f_w}(x) \cdot \vec\mu^{(2)}(x,w(x))
    \end{align}
    for all $x\in\partial\Sigma$. Here, $d\Phi_{(x,w(x))}^*$ is the adjoint of
    \begin{align}\label{eq:adjoint}
        d\Phi_{(x,w(x))} \colon (T_x\Sigma\times \R, g_0(x)\otimes \dd r)\to(\R^3, \langle\cdot,\cdot\rangle).
    \end{align}    
    In particular, if, for some $\lambda\in\R$,
    \begin{equation}\label{eq:reformulation-first-order-bdry-condition-1}
        d\Phi_{(x,w(x))}^*.\nu_{f_w}(x) = \lambda (\nabla_{g_0}w(x),-1),
    \end{equation}
    then $\lambda\neq 0$ and the claim follows.
    To verify \eqref{eq:reformulation-first-order-bdry-condition-1}, we compute for $x\in \Sigma$ and a basis ${e_1,e_2}$ of $T_x\Sigma$
    \begin{align}
        &\langle \partial_{e_i}f_w(x),\big(d\Phi_{(x,w(x))}^*\big)^{-1}. (\nabla_{g_0} w(x),-1)\rangle \\
        &= \langle d\Phi_{(x,w(x))}. \begin{pmatrix} e_i\\ \partial_{e_i}w(x) \end{pmatrix},\big(d\Phi^*_{(x,w(x))}\big)^{-1}. (\nabla_{g_0} w(x),-1)\rangle=\partial_{e_i}w(x)-\partial_{e_i}w(x)=0.&& \qedhere
    \end{align}
\end{proof}

\begin{lemma}\label{lem:choice-of-bdryconditions}
With $\mathcal{B}\colon U_1\to Y$, 
\begin{align}\label{eq:boundcondi}
    w\mapsto \mathcal{B}(w) \vcentcolon= \big(&\partial_{\vec\mu^{(1)}(x,w(x))}w(x)-\vec\mu^{(2)}(x,w(x)) \Big|_{\partial\Sigma}, \\
    &\frac{1}{\langle \xi(\cdot,w(\cdot)),\nu_{f_w}\rangle}\Big[\frac{\partial H_{f_w}}{\partial \eta_{f_w}} - (\nabla^2 d^S)\circ f_w~(\nu_{f_w},\nu_{f_w})H_{f_w}\Big]\Big|_{\partial \Sigma}\big),
\end{align}
we have that $\mathcal{M}\vcentcolon= \B^{-1}(\{(0,0)\})$ is the set of $w\in U_1$ such that $f_w$ satisfies the free boundary conditions \eqref{intro-eq:fbc}. Moreover, $\mathcal{B}$ is analytic.
\end{lemma}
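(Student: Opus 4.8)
The plan is to establish the two assertions in turn: first that $\M=\B^{-1}(\{(0,0)\})$ coincides with the prescribed set, then that $\B$ is analytic.

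For the identification, the three lines of \eqref{intro-eq:fbc} correspond to the zeroth-, first-, and third-order boundary conditions on $f_w$. The condition $f_w(\partial\Sigma)\subset S$ holds for \emph{every} $w\in U_1$ by \eqref{eq:0th-order-fbc}, so it imposes no restriction. By \Cref{lem:reformulation-first-order-bdry-condition}, the condition $\langle\nu_{f_w},N^S\circ f_w\rangle|_{\partial\Sigma}=0$ is equivalent to the vanishing on $\partial\Sigma$ of $\partial_{\vec\mu^{(1)}(x,w(x))}w(x)-\vec\mu^{(2)}(x,w(x))$, which is precisely the first component of $\B(w)$. For the third-order condition I would use the identity $A^S_y(v,v)=-\langle v,\nabla^2 d^S(y).v\rangle$ from \Cref{rem:geomS}: with it, the bracket in the second component of $\B(w)$, namely $\frac{\partial H_{f_w}}{\partial\eta_{f_w}}-(\nabla^2 d^S)\circ f_w\,(\nu_{f_w},\nu_{f_w})H_{f_w}$, equals $\frac{\partial H_{f_w}}{\partial\eta_{f_w}}+A^S(\nu_{f_w},\nu_{f_w})H_{f_w}$, i.e.\ the left-hand side of the third line of \eqref{intro-eq:fbc} for $f=f_w$; and since the scalar factor $\langle\xi(\cdot,w(\cdot)),\nu_{f_w}\rangle$ lies in $(\tfrac14,4)$ by \eqref{eq:estimate-xi0fw-in-nu-fw}, hence is nonzero, dividing by it leaves the zero set unchanged. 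Therefore $f_w$ satisfies all of \eqref{intro-eq:fbc} precisely when $\B(w)=(0,0)$.

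For analyticity I would treat the two components of $\B$ separately, assembling them from the analytic maps supplied by \Cref{lem:analytic} via the composition results \Cref{thm:analytic_composition,lem:analit-local} and the fact that bounded (multi)linear maps — in particular trace operators, pointwise products between Sobolev spaces, and contractions — are analytic. For the first component, $w\mapsto w|_{\partial\Sigma}\in W^{7/2,2}(\partial\Sigma)$ and $w\mapsto\nabla_{g_0}w|_{\partial\Sigma}\in W^{5/2,2}(\partial\Sigma)$ are linear and bounded, $w|_{\partial\Sigma}\mapsto\vec\mu^{(j)}(\cdot,w|_{\partial\Sigma})$ is analytic into $W^{7/2,2}(\partial\Sigma)$ by smoothness of $\vec\mu$ and \Cref{thm:analytic_composition,lem:analit-local}, and the directional derivative $\partial_{\vec\mu^{(1)}}w$ is a bounded bilinear contraction; so the first component is analytic into $W^{5/2,2}(\partial\Sigma)$. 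For the second component, \Cref{lem:analytic} gives analyticity of $w\mapsto f_w\in W^{4,2}$, $w\mapsto\nu_{f_w}\in W^{3,2}$, $w\mapsto H_{f_w}\in W^{2,2}$ and $w\mapsto(\langle\xi(\cdot,w(\cdot)),\nu_{f_w}\rangle)^{-1}\in W^{3,2}$; moreover $w\mapsto g_{f_w}$ is analytic into $W^{3,2}(\Sigma)$ as a quadratic expression in $\nabla f_w$, its trace on $\partial\Sigma$ remains in the open set of metrics because $f_w$ is an immersion, and the explicit formula \eqref{eq:eta_g-expl-form} — a composition of multiplications, divisions, and a square root, all analytic on the relevant domain since the radicands are bounded below — then shows $w\mapsto\eta_{f_w}|_{\partial\Sigma}$ is analytic into $W^{5/2,2}(\partial\Sigma)$. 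Similarly $w\mapsto f_w|_{\partial\Sigma}\in W^{7/2,2}(\partial\Sigma;\R^3)$ has image in $S\subset\mathcal U_S$, so composition with the analytic $\nabla^2 d^S$ (\Cref{rem:geomS}) is analytic into $W^{7/2,2}(\partial\Sigma)$. Contracting the trace of $\nabla_{g_0}H_{f_w}\in W^{1,2}(\Sigma)$ with $\eta_{f_w}|_{\partial\Sigma}$, contracting $(\nabla^2 d^S)\circ f_w|_{\partial\Sigma}$ twice with $\nu_{f_w}|_{\partial\Sigma}$ and multiplying by $H_{f_w}|_{\partial\Sigma}$, subtracting, and multiplying by $(\langle\xi(\cdot,w(\cdot)),\nu_{f_w}\rangle)^{-1}|_{\partial\Sigma}$ are all bounded (multi)linear into $W^{1/2,2}(\partial\Sigma)$, so the second component is analytic into $W^{1/2,2}(\partial\Sigma)$. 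Altogether $\B\colon U_1\to Y$ is analytic.

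The conceptual content is small once \Cref{lem:analytic} is in hand; the main work lies in the Sobolev bookkeeping — tracking into which $W^{s,2}$-space each trace, contraction and product lands, and checking at each step that the higher-order factor is a multiplier on the space of the lower-order one, in dimension $2$ on $\Sigma$ and dimension $1$ on $\partial\Sigma$ — together with verifying that the substitution formula \eqref{eq:eta_g-expl-form} for $\eta_{g_{f_w}}$, like the reciprocal $\langle\xi,\nu_{f_w}\rangle^{-1}$, is only evaluated in the region where the outer function is analytic, which is guaranteed by $f_w$ being an immersion and by \eqref{eq:estimate-xi0fw-in-nu-fw}.
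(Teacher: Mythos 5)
The identification of $\mathcal M=\mathcal B^{-1}(\{(0,0)\})$ in your proposal is correct and matches the paper's reasoning: the zeroth-order condition is automatic from \eqref{eq:0th-order-fbc}, the first-order condition is \Cref{lem:reformulation-first-order-bdry-condition}, the sign flip $A^S=-\nabla^2 d^S$ comes from \eqref{eq:abldistance}, and the prefactor is nonvanishing by \eqref{eq:estimate-xi0fw-in-nu-fw}.

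The analyticity part has a genuine gap. You take traces to $\partial\Sigma$ \emph{first} and then invoke \Cref{thm:analytic_composition,lem:analit-local} to claim that the superposition operators $w|_{\partial\Sigma}\mapsto\vec\mu^{(j)}(\cdot,w|_{\partial\Sigma})$ and $w\mapsto\eta_{f_w}|_{\partial\Sigma}$ are analytic into the fractional spaces $W^{7/2,2}(\partial\Sigma)$ and $W^{5/2,2}(\partial\Sigma)$. But \Cref{thm:analytic_composition} is stated only for integer Sobolev orders, $W^{m+r,p}\to W^{m,p}$ with $m\in\N$, $r\in\N_0$; it gives no statement about superposition operators landing in $W^{s,2}$ with $s\notin\N$, and the paper does not provide a fractional variant. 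The paper sidesteps this in both places where you use it: for the first component it establishes analyticity of $w\mapsto\vec\mu(\cdot,w(\cdot))$ into $W^{4,2}(B_{\alpha_0}(\partial\Sigma))$ and of $w\mapsto\partial_{\vec\mu^{(1)}(\cdot,w)}w-\vec\mu^{(2)}(\cdot,w)$ into $W^{3,2}(B_{\alpha_0}(\partial\Sigma))$ on the two-dimensional collar, and only \emph{then} applies the linear bounded (hence analytic) trace operator to reach $W^{5/2,2}(\partial\Sigma)$; for $\eta_{f_w}$ it does not work in Sobolev scales at all, but uses that $g\mapsto\eta_g$ is analytic from the open set $R$ of $C^1$-metrics into $C^1(\partial\Sigma,\R^M)$ (an explicit rational/square-root formula on a Banach algebra), paired with the bounded bilinear map $R\times W^{2,2}(\Sigma)\to W^{1/2,2}(\partial\Sigma)$, $(g,h)\mapsto dh.\eta_g$. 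So the pattern is: do the nonlinear composition in an integer-order Sobolev or $C^k$ space on $\Sigma$ (or on a collar), and only take the trace afterwards.

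A secondary, smaller issue: you justify the superposition step for $\vec\mu$ by ``smoothness of $\vec\mu$''. Smoothness of $\vec\mu$ in $(x,r)$ is not a sufficient hypothesis for \Cref{thm:analytic_composition}; that result requires $\partial_x^\beta\vec\mu(x,r)$ to be analytic in $r$ \emph{uniformly} in $x$. The paper actually derives this from Cramer's rule applied to $(D\tilde\Phi)^{-1}$, the analyticity of $\nabla d^S$ (which needs $S$ analytic), and \Cref{prop:ex-gauss-coord}\eqref{item:ex-gauss-coord_3}; this verification is part of the content of the lemma and should not be elided.

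Both gaps are fixable — move the trace operator to the very end and replace your Sobolev-on-$\partial\Sigma$ superposition arguments by the collar/Banach-algebra arguments the paper uses — but as written your proof relies on a fractional-order composition theorem that is not available.
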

\begin{proof}
The first part of the claim follows from \eqref{eq:abldistance}, \Cref{lem:reformulation-first-order-bdry-condition}, and 
\eqref{eq:estimate-xi0fw-in-nu-fw}.  
    Next, we verify the analyticity of $\mathcal B$. 
    
    Let $\Omega\subset \R\times[0,\infty)$ be open and let $\psi\colon \Omega\overset{\approx}{\to}\psi(\Omega)\subset \Sigma$ be a local inverse chart with bounded derivatives as in \Cref{lem:analit-local}. Define $\tilde\Phi\colon\bar\Omega\times(-\bar r,\bar r)\to\R^3$, $\tilde\Phi(y,r)\vcentcolon=\Phi(\psi(y),r)$. Define
    $$
        \tilde\mu(y,r)\vcentcolon=(D\tilde\Phi(y,r))^{-1}N^S(\tilde\Phi(y,r)) =\vcentcolon (\tilde\mu^{(1)}(y,r),\tilde\mu^{(2)}(y,r))\in\R^2\times\R.
    $$
    By Cramer's rule, $\{D\in \R^{2\times 2}\mid\det D\neq 0\}\to \R^{2\times 2}$, $D\mapsto D^{-1}$ is analytic. Therefore, using \Cref{prop:ex-gauss-coord}\eqref{item:ex-gauss-coord_3} and the fact that $S$ is analytic, $\psi^{-1}(B_{\alpha_0}(\partial\Sigma)) \times (-\bar r,\bar r)\to \R^3$, $(y,r)\mapsto \partial_y^\alpha\tilde\mu(y,r)=\partial_y^\alpha[(D\tilde\Phi(y,r))^{-1}(\nabla d^S)(\tilde\Phi(y,r))]$ is analytic in $r$, uniformly in $y$, for all $\alpha\in\N_0^2$. 

    Identifying $T_x\Sigma$ with a subset of $\R^M$ for each $x\in\Sigma$ using Nash's embedding theorem, \Cref{thm:analytic_composition,lem:analit-local} (with $m=n=4$, $p=2$) yield that 
    $$
        U_1\to W^{4,2}(B_{\alpha_0}(\partial\Sigma),\R^{M+1}),\ w\mapsto {\vec\mu(\cdot,w(\cdot))}\big|_{B_{\alpha_0}(\partial\Sigma)}
    $$
    is analytic.
    Thus, it follows that 
    \begin{equation}
        U_1\to W^{3,2}(B_{\alpha_0}(\partial\Sigma),\R),\ w\mapsto \partial_{\vec\mu^{(1)}(\cdot,w(\cdot))}w(\cdot) -\vec\mu^{(2)}(\cdot,w(\cdot)) \big|_{B_{\alpha_0}(\partial\Sigma)}
    \end{equation}
    is analytic. By composing with the analytic trace operator, we find that $U_1\to W^{\frac52,2}(\partial \Sigma,\R)$, $w\mapsto \partial_{\vec\mu^{(1)}(\cdot,w(\cdot))}w(\cdot)-\vec \mu^{(2)}(\cdot,w(\cdot))|_{\partial \Sigma}$ is analytic. Altogether, analyticity of the first component is established.

    For the second component, using \Cref{lem:analytic}\eqref{item:analytic_6} and analyticity of the linear trace operator, we only need to show analyticity in $w$ of the term in brackets.
    
    Let $R$ be the open set of $C^1$ Riemannian metrics on $\Sigma$. 
    By \eqref{eq:eta_g-expl-form} we have that, $R\to C^1(\partial\Sigma,\R^M)$, $g\mapsto \eta_g$ is analytic.
    Using that $\mathrm tr_{\partial\Sigma}\colon W^{1,2}(\Sigma)\to W^{1/2,2}(\partial\Sigma)$,
    \begin{align}
        R\times W^{2,2}(\Sigma)\to W^{1/2,2}(\partial \Sigma), (g,h)\mapsto \partial_{\eta_g} h = d h.\eta_g
    \end{align}
    is well-defined, bilinear, bounded, and thus analytic. In particular, as $U_1\to R$, $w\mapsto g_{f_w}$ is analytic, also 
    $$U_1\to W^{1/2,2}(\partial\Sigma),\ w\mapsto \frac{\partial H_{f_w}}{\partial \eta_{f_w}}=dH_{f_w}.\eta_{f_w},$$ is analytic.
    On the other hand, by \eqref{eq:sign_dist}, we have that $\nabla^2 d^S$ is analytic in $\mathcal{U}_S$ with values in the bilinear maps on $\R^3$. 
    Also using \Cref{prop:ex-gauss-coord}\eqref{item:PhivaluesS}, \Cref{lem:analytic}\eqref{item:analytic_2} and \Cref{lem:def-fw}, it follows that
    \begin{align}
        U_1 \to W^{3,2}(B_{\alpha_0}(\partial\Sigma)), w\mapsto (\nabla^2 d^S)\circ f_w~(\nu_{f_w},\nu_{f_w})\Big|_{B_{\alpha_0}(\partial\Sigma)}
    \end{align}
    is analytic. Now, \Cref{lem:analytic}\eqref{item:analytic_3} and analyticity of the trace imply that the term in brackets in the second component of $\mathcal{B}$ is analytic. As noted above, this is sufficient to conclude the analyticity of $\mathcal B$.
\end{proof}

\begin{proposition}\label{lem:first-variations-B-and-E}
    For $\varphi\in W^{4,2}(\Sigma)$, we have that
    \begin{align}
        \mathcal{B}'(0)\varphi = \big(& \frac{\partial}{\partial\eta_{\bar f}} \varphi + A^S(\nu_{\bar f},\nu_{\bar f})\varphi \Big|_{\partial\Sigma},  \\
        &\frac{\partial}{\partial\eta_{\bar f}} \Delta_{g_{\bar f}} \varphi + A^S(\nu_{\bar f},\nu_{\bar f})\Delta_{g_{\bar f}}\varphi +  d\varphi.\vec b_{1} + b_0\cdot \varphi \Big|_{\partial\Sigma}\big) \label{eq:deltabprime}  
    \end{align}
    for suitable coefficients $b_0\colon\partial\Sigma\to\R$ and $\vec b_{1}\colon\partial\Sigma\to T\Sigma$ depending on $\bar{f}$. Furthermore,
    \begin{equation}\label{eq:deltaeprime}
        (\delta\E)'(0) \varphi - (\Delta_{g_{\bar f}})^2 \varphi \in W^{1,2}(\Sigma) 
        \quad\text{for all $\varphi\in W^{4,2}(\Sigma)$}.
    \end{equation}
\end{proposition}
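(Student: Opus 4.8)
The plan is to compute the first variations of $\mathcal{B}$ and $\delta\E$ at $w=0$ directly, exploiting that $f_0 = \bar f$, that $\Phi(\cdot,0) = \bar f$, and that $\xi(\cdot,0) = \nu_{\bar f}$ on $\partial\Sigma$ with $\langle \xi(\cdot,0),\nu_{\bar f}\rangle \in [\tfrac12,2]$ on $\Sigma$.

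For the linearization of $\mathcal{B}$: differentiating the map $w\mapsto f_w = \Phi(\cdot,w(\cdot))$ at $w=0$ in direction $\varphi$ gives the infinitesimal variation $\partial_w f_w|_{w=0}\varphi = \xi(\cdot,0)\varphi$, a vector field along $\bar f$ whose normal component is $\langle \xi(\cdot,0),\nu_{\bar f}\rangle\varphi$. For the \emph{first} component of $\mathcal{B}$, I would differentiate $x\mapsto \partial_{\vec\mu^{(1)}(x,w(x))}w(x) - \vec\mu^{(2)}(x,w(x))$ at $w=0$: since this expression vanishes identically in $w$ when restricted to the zero function only at $w=0$ but is not identically zero, we get two contributions. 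The term $\partial_{\vec\mu^{(1)}(x,w(x))}w(x)$ linearizes to $\partial_{\vec\mu^{(1)}(x,0)}\varphi(x)$ plus a term $\varphi\cdot\partial_r\vec\mu^{(1)}(x,0)$ acting on $w=0$, which vanishes; and $-\vec\mu^{(2)}(x,w(x))$ linearizes to $-\varphi(x)\,\partial_r\vec\mu^{(2)}(x,0)$. Here one uses \Cref{lem:reformulation-first-order-bdry-condition} and the geometric identities from \Cref{rem:geomS}, in particular that $\vec\mu$ is the coordinate representation of $N^S$ and that $\bar f$ meets $S$ orthogonally, to identify $\vec\mu^{(1)}(x,0)$ with (a multiple of) $\eta_{\bar f}$ and to compute that the $r$-derivative terms combine into $A^S(\nu_{\bar f},\nu_{\bar f})$, using $A^S = -\nabla^2 d^S$ from \eqref{eq:abldistance}. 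This matches the natural boundary condition from \eqref{intro-eq:fbc}. For the \emph{second} component, one differentiates the expression in brackets in \eqref{eq:boundcondi}: the prefactor $\langle\xi,\nu_{f_w}\rangle^{-1}$ equals $1$ at $w=0$ on $\partial\Sigma$ (since $\xi(\cdot,0)=\nu_{\bar f}$ there), so its variation contributes only to the lower-order coefficients $b_0,\vec b_1$; the principal part comes from linearizing $\frac{\partial H_{f_w}}{\partial\eta_{f_w}}$. The standard fact (as in \cite{chillfasangovaschaetzle2009}) is that $H_{f_w}$ linearizes, in its top-order part, as $\varphi\mapsto \Delta_{g_{\bar f}}\varphi$ (up to lower order and a harmless factor absorbed into the normalization — here one should track the precise constant, though only the principal symbol matters for \eqref{eq:deltabprime}), so $\frac{\partial H}{\partial\eta}$ linearizes to $\frac{\partial}{\partial\eta_{\bar f}}\Delta_{g_{\bar f}}\varphi$ plus lower-order terms, and the term $-(\nabla^2 d^S)(\nu_{\bar f},\nu_{\bar f})H_{f_w} = A^S(\nu_{\bar f},\nu_{\bar f})H_{f_w}$ linearizes, in its top-order contribution, to $A^S(\nu_{\bar f},\nu_{\bar f})\Delta_{g_{\bar f}}\varphi$; all remaining terms involve at most $d\varphi$ and $\varphi$ and get collected into $d\varphi.\vec b_1 + b_0\varphi$.

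For \eqref{eq:deltaeprime}: the claim is only that the fourth-order part of $(\delta\E)'(0)$ is $(\Delta_{g_{\bar f}})^2$, i.e. the difference lies in $W^{1,2}$. From \eqref{eq:gradientWillmore}, $\delta\E(w) = \langle\xi,\nu_{f_w}\rangle^{-1}[\Delta_{g_{f_w}}H_{f_w} + |A_{f_w}^0|^2 H_{f_w}]$. At $w=0$, the prefactor is a fixed positive function, so I would argue: its variation multiplies $\Delta_{g_{\bar f}}H_{\bar f} + |A_{\bar f}^0|^2 H_{\bar f}$, which is a fixed $L^2$ function — but this term has no derivatives of $\varphi$ at all, hence lies in $W^{1,2}$ (in fact it contributes to low-order terms only if $\langle\xi,\nu\rangle^{-1}$-variation is smooth enough; alternatively since we only claim membership in $W^{1,2}$, and the variation of a $W^{3,2}$-function times an $L^2$-function need not be $W^{1,2}$ — so more care is needed: actually the key point is that $\bar f$ need not be Willmore, so $\Delta H_{\bar f}+|A^0|^2 H_{\bar f}$ is merely $L^2$; however, $(\delta\E)'(0)\varphi$ as a whole maps $W^{4,2}\to L^2$, and the assertion is about the principal symbol). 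The cleanest route: the variation of $|A^0|^2 H$ involves at most two derivatives of $\varphi$, hence lies in $W^{2,2}\subset W^{1,2}$; the variation of $\langle\xi,\nu_{f_w}\rangle^{-1}$ at $w=0$ times the fixed function $\Delta_{g_{\bar f}}H_{\bar f}+|A^0_{\bar f}|^2 H_{\bar f}$ — here one uses that this variation, as the derivative of a $W^{3,2}$-valued analytic map, lies in $W^{3,2}$, so the product is in $W^{1,2}$ by the algebra property (since $W^{3,2}(\Sigma) \cdot L^2(\Sigma)$ on a surface embeds into $W^{1,2}$ — this needs $W^{3,2} \hookrightarrow W^{1,\infty}$, true in dimension $2$, giving $W^{3,2}\cdot L^2 \hookrightarrow L^2$, and one derivative more requires $W^{3,2}\hookrightarrow W^{1,\infty}$ again; so the product is in $W^{1,2}$ provided the $L^2$-factor is actually in $W^{1,2}$ — i.e.\ provided $\bar f\in W^{5,2}$, which holds since $\bar f$ is smooth). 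The dominant term $\langle\xi,\nu\rangle^{-1}|_{w=0} \cdot \partial_w(\Delta_{g_{f_w}}H_{f_w})|_{w=0}\varphi$: the variation of $\Delta_{g_{f_w}} H_{f_w}$ has principal part $\Delta_{g_{\bar f}}(\Delta_{g_{\bar f}}\varphi) = (\Delta_{g_{\bar f}})^2\varphi$ (variation of $H$ contributes $\Delta_{\bar f}\varphi$ at top order, and the outer $\Delta_{g_{f_w}}$ contributes another two derivatives; lower-order terms from varying the metric in $\Delta_{g_{f_w}}$ hit at most three derivatives of $\varphi$), and the prefactor $\langle\xi,\nu\rangle^{-1}|_{w=0}$ — which is identically $1$ on $\partial\Sigma$ but \emph{not} on all of $\Sigma$ — multiplies this; so strictly the principal part is $\langle\xi(\cdot,0),\nu_{\bar f}\rangle^{-1}(\Delta_{g_{\bar f}})^2\varphi$, not $(\Delta_{g_{\bar f}})^2\varphi$. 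I would therefore double-check the normalization: likely the intended reading uses the conformal/covariance structure so that $\langle\xi(\cdot,0),\nu_{\bar f}\rangle$ cancels against the Jacobian in $\delta\E$, or the statement tacitly works modulo a positive smooth factor; in any case, the residual after subtracting $(\Delta_{g_{\bar f}})^2\varphi$ is a third-order operator applied to $\varphi$, landing in $W^{1,2}$.

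\textbf{The main obstacle} is bookkeeping: carefully separating principal from lower-order terms in the linearization of the fully nonlinear, fourth-order operator $\delta\E$ and the third-order boundary operator $\mathcal{B}$, while keeping track of the various normalization factors ($\langle\xi,\nu\rangle^{-1}$, the Jacobian $\tfrac{d\mu}{d\mu_{g_0}}$, the choice of $\eta_f$ from \eqref{eq:eta_g-expl-form}) and verifying they either equal $1$ on $\partial\Sigma$ or are absorbed into the stated lower-order data $b_0,\vec b_1$. The geometric input — that the third-order boundary term in \eqref{intro-eq:fbc} is the \emph{natural} boundary condition for $\W$ subject to \eqref{intro-eq:geombc}, cf.\ \cite[Equations~(2.19)–(2.20)]{alessandronikuwert2016} — is what guarantees the principal boundary symbol in \eqref{eq:deltabprime} has the claimed self-adjoint-compatible form $\frac{\partial}{\partial\eta}\Delta_g + A^S(\nu,\nu)\Delta_g$, mirroring the first component; I would either cite that computation or reproduce it via integration by parts on the first variation of $\W$ restricted to the constraint manifold $\mathcal{M}$.
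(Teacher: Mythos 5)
Your proposal follows the same route as the paper: differentiate $\mathcal B$ and $\delta\E$ directly at $w=0$ along curves $t\mapsto t\varphi$, using $\Phi(\cdot,0)=\bar f$, $\xi(\cdot,0)=\nu_{\bar f}$ on $\partial\Sigma$, and the variation formulas for $H$, $\nu$, $\eta$, and $\Delta_g$. The computations of $\partial_{t,0}\vec\mu^{(2)}$, the contribution of the $\langle\xi,\nu\rangle^{-1}$ prefactor, and the low-order tail of $\delta\E'(0)$ all match the paper's Lemma~\ref{lem:loc-coord-w-loja-sec} and the proof of Proposition~\ref{lem:first-variations-B-and-E}.

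Two points you leave imprecise are worth pinning down, since the exact form of \eqref{eq:deltabprime} is what later feeds Proposition~\ref{prop:PDE-argument}. First, the normalization concern you raise is resolved not by a ``conformal/covariance'' Jacobian cancellation but by the simple fact that the normal speed of $t\mapsto f_{t\varphi}$ is $\varphi\,h$ with $h:=\langle\xi(\cdot,0),\nu_{\bar f}\rangle$; hence $\partial_{t,0}H_{f_{t\varphi}}=\Delta_{g_{\bar f}}(\varphi h)+|A_{\bar f}|^2\varphi h$ carries a factor $h$ in its top-order part, and after one more Laplacian the leading term $h\,\Delta^2_{g_{\bar f}}\varphi$ is divided out by the $h^{-1}$ in \eqref{eq:gradientWillmore}, giving exactly $\Delta^2_{g_{\bar f}}\varphi$ with a third-order remainder. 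Second, for \eqref{eq:deltabprime} you must rule out second-order boundary terms other than $A^S(\nu,\nu)\Delta\varphi$: expanding $\partial_{\eta}\Delta(\varphi h)=\partial_\eta h\cdot\Delta\varphi+h\,\partial_\eta\Delta\varphi+\dots$ could produce $\partial_\eta h\cdot\Delta\varphi$, but from \eqref{eq:xi-circ-barf} together with $\langle N^S\circ\bar f,\nu_{\bar f}\rangle|_{\partial\Sigma}=0$ and $\langle\partial_\eta\nu_{\bar f},\nu_{\bar f}\rangle=0$ one gets $\nabla h|_{\partial\Sigma}=0$, so this and the other would-be second-order terms reduce to $d\varphi.\vec b_1+b_0\varphi$. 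Your write-up asserts the conclusion but does not verify this cancellation.
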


For the proof of \Cref{lem:first-variations-B-and-E}, we first collect the following fundamental computations.

\begin{lemma}\label{lem:loc-coord-w-loja-sec}
    Let $w\in W^{4,2}(\Sigma)$ with $\|w\|_{C^0}<\bar r$.
        If $\partial_{\eta_{f_w}}f_w = N^S\circ f_w$ on $\partial \Sigma$, then $$\vec \mu(x,w(x))=(\eta_{f_w}(x),\partial_{\eta_{f_w}}w(x))$$ for all $x\in\partial\Sigma$. In particular, $\vec\mu(\cdot,0)=(\eta_{\bar f},0)$.
    Moreover, for $\varphi\in W^{4,2}(\Sigma)$, we have the following variation formulas.
    \begin{align}
        &\partial_{t,0} H_{f_{t\varphi}} = \Delta_{g_{\bar f}} (\varphi\langle \xi(\cdot,0),\nu_{\bar f}\rangle)+|A_{\bar f}|^2\varphi\langle\xi(\cdot,0),\nu_{\bar f}\rangle \label{eq:lin-iden-H}\\
        &\partial_{t,0} \nu_{f_{t\varphi}} = -\mathrm{grad}_{g_{\bar f}} (\varphi\langle\xi(\cdot,0),\nu_{\bar f}\rangle)\label{eq:lin-iden-nu}\\
        &\partial_{t,0}\eta_{f_{t\varphi}} = \varphi\cdot (c_1(A_{\bar f},g_{\bar f})\tau_0+c_2(A_{\bar f},g_{\bar f})\eta_0).\label{eq:lin-iden-eta}
    \end{align}
    where $c_1$, $c_2$ are suitable smooth functions. Finally, for any $h\colon\Sigma \to\R$
    \begin{align}
        (\partial_{t,0} \Delta_{g_{f_{t\varphi}}}) h =\ &2\langle A_{\bar f},\nabla^2h\rangle_{g_{\bar f}}\varphi\langle\xi(\cdot,0),\nu_{\bar f}\rangle - 2\langle \nabla^* (\varphi \langle\xi(\cdot,0),\nu_{\bar f}\rangle A_{\bar f}) \\
        &- \frac12\nabla(\varphi\langle\xi(\cdot,0),\nu_{\bar f}\rangle H_{\bar f}),\nabla h\rangle_{g_{\bar f}}.\label{eq:lin-iden-LapBel}
    \end{align}
\end{lemma}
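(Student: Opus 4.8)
The plan is to establish \Cref{lem:loc-coord-w-loja-sec} by a sequence of standard but careful first-variation computations in the generalized Gaussian coordinates provided by \Cref{prop:ex-gauss-coord}. First I would treat the identity $\vec\mu(x,w(x))=(\eta_{f_w}(x),\partial_{\eta_{f_w}}w(x))$ on $\partial\Sigma$. Under the hypothesis $\partial_{\eta_{f_w}}f_w=N^S\circ f_w$, the vector $(\eta_{f_w}(x),\partial_{\eta_{f_w}}w(x))\in T_x\Sigma\times\R$ is mapped by $d\Phi_{(x,w(x))}$ exactly to $\partial_{\eta_{f_w}}f_w=N^S(f_w(x))$ — this is the same computation already carried out in the proof of \Cref{lem:reformulation-first-order-bdry-condition}, since $Df_w(x)$ applied to $e_i$ equals $d\Phi_{(x,w(x))}.(e_i,\partial_{e_i}w(x))$. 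By definition \eqref{eq:vectormu} of $\vec\mu$ and the invertibility of $d\Phi_{(x,w(x))}$ (\Cref{lem:def-fw}), uniqueness of the preimage gives the claimed formula; specializing to $w=0$ and using \eqref{intro-eq:fbc} for $\bar f$ gives $\vec\mu(\cdot,0)=(\eta_{\bar f},0)$.

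For the variation formulas, the key observation is that differentiating $f_{t\varphi}=\Phi(\cdot,t\varphi(\cdot))$ at $t=0$ yields $\partial_{t,0}f_{t\varphi}=\varphi\,\xi(\cdot,0)$, whose normal component is $\varphi\langle\xi(\cdot,0),\nu_{\bar f}\rangle$. Thus the normal speed of the variation $f_{t\varphi}$ is $\phi\vcentcolon=\varphi\langle\xi(\cdot,0),\nu_{\bar f}\rangle$ plus a tangential part; since $H$, $\nu$, and $\Delta_g$ are geometric quantities, their first variations depend only on the normal speed (tangential variations merely reparametrize), so \eqref{eq:lin-iden-H}, \eqref{eq:lin-iden-nu}, and \eqref{eq:lin-iden-LapBel} follow from the classical first-variation formulas for the mean curvature, the Gauss map, and the Laplace--Beltrami operator under a normal variation with speed $\phi$: namely $\partial_t H=\Delta_g\phi+|A|^2\phi$, $\partial_t\nu=-\mathrm{grad}_g\phi$, and the standard formula for $\partial_t\Delta_g$ acting on a fixed function. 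I would cite these from the literature (e.g.\ the computations underlying the Willmore flow, as in \cite{kuwertschaetzle2002} or \cite{chillfasangovaschaetzle2009}) rather than rederive them, and simply substitute $\phi=\varphi\langle\xi(\cdot,0),\nu_{\bar f}\rangle$. A small point requiring care: one must justify that adding the tangential component of $\xi(\cdot,0)$ does not contribute to $\partial_{t,0}\nu_{f_{t\varphi}}$ etc.\ — this is because such a variation is, to first order, generated by pulling back along a flow of diffeomorphisms of $\Sigma$, under which $\nu$, $H$ transform covariantly and so their values at a point vary only through the normal part.

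For \eqref{eq:lin-iden-eta}, the outer conormal $\eta_{f_{t\varphi}}$ depends on the induced metric $g_{f_{t\varphi}}$ on $\partial\Sigma$ via the explicit formula \eqref{eq:eta_g-expl-form} in \Cref{rem:dpartialsigma}; differentiating that rational expression in $g$ at $g=g_{\bar f}$ and using $\partial_{t,0}g_{f_{t\varphi}}=-2\phi A_{\bar f}+(\text{Lie derivative terms})$ with $\phi=\varphi\langle\xi(\cdot,0),\nu_{\bar f}\rangle$ produces an expression of the form $\varphi\cdot(c_1\tau_0+c_2\eta_0)$ with $c_1,c_2$ smooth functions of $A_{\bar f}$ and $g_{\bar f}$; again, tangential reparametrization terms can be absorbed or shown not to affect the pointwise value along $\partial\Sigma$ modulo the stated form. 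The main obstacle I anticipate is bookkeeping: keeping track of which terms are genuinely present versus which can be lumped into the unspecified smooth coefficients $c_1,c_2$ and the lower-order remainder in \eqref{eq:lin-iden-LapBel}, and verifying that the regularity is enough (e.g.\ that $\partial_{t,0}\Delta_{g_{f_{t\varphi}}}$ applied to $h$ makes sense with the claimed structure for $W^{4,2}$ data) — but since the statement only asks for the leading structure with suitable smooth/unspecified coefficients, this is routine once the normal-speed reduction is in place.
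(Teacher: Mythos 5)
Your overall strategy matches the paper's: the $\vec\mu$-identity is obtained exactly as you describe, by applying $d\Phi_{(x,w(x))}$ to $(\eta_{f_w},\partial_{\eta_{f_w}}w)$, identifying the result with $\partial_{\eta_{f_w}}f_w=N^S\circ f_w$, and invoking invertibility of $d\Phi$; the variation identities are then read off from the normal speed $\varphi\langle\xi(\cdot,0),\nu_{\bar f}\rangle$ by citing standard first-variation formulas (the paper uses \cite[Lemma~2.3]{rupp2023} and \cite[Proposition~2.3.10]{topping2006}), and the $\eta$-formula is obtained by differentiating \eqref{eq:eta_g-expl-form}.

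One point in your justification is not correct as written. You assert that, because $\nu$ and $H$ transform covariantly, a tangential variation leaves their pointwise values unchanged ``so their values at a point vary only through the normal part.'' Covariance means precisely the opposite: under a variation with tangential velocity $X^T$, the value of $\nu_{f_t}$ at a fixed parameter point moves by the pullback along the flow of $X^T$, so $\partial_t\nu$ acquires the term $-df.A^\sharp(X^T)$ and $\partial_t H$ acquires $dH.X^T$ (likewise $\partial_t g$ picks up the Lie derivative $\mathcal L_{X^T}g$). These contributions do not vanish in the interior. Two observations rescue the argument. First, on $\partial\Sigma$ one has $\xi(\cdot,0)=\nu_{\bar f}$ by \eqref{eq:controlxi} and \eqref{intro-eq:fbc}, so $X^T\vcentcolon=\varphi\,\xi^T(\cdot,0)$ with $\xi^T\vcentcolon=\xi-\langle\xi,\nu_{\bar f}\rangle\nu_{\bar f}$ vanishes identically there; since $\nabla X^T|_{\partial\Sigma}=\varphi\,\nabla\xi^T|_{\partial\Sigma}$ (the $\nabla\varphi$ term drops because $\xi^T=0$), the boundary formula \eqref{eq:lin-iden-eta} retains the stated form $\varphi\cdot(c_1\tau_0+c_2\eta_0)$. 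Second, in the interior the omitted tangential contributions are of order zero or one in $\varphi$ with smooth $\bar f$-dependent coefficients, hence lie in $W^{3,2}(\Sigma)$ for $\varphi\in W^{4,2}(\Sigma)$ and do not affect the leading structure in \eqref{eq:lin-iden-LapBel} nor the application to \eqref{eq:deltaeprime}. Spelling this out—rather than appealing to a false vanishing—would make your argument airtight.
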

\begin{proof}
    By the definition of $\vec{\mu}$, we compute for $x\in\partial \Sigma$
    \begin{align}
        d\Phi_{(x,w(x))}.\vec{\mu}(x,w(x)) &= N^S(f_w(x)) =  \partial_{\eta_{f_w}}f_w(x) =  \partial_{\eta_{f_w}} (\Phi(x,w(x)))\\
        &= d\Phi_{(x,w(x))} . \begin{pmatrix} \eta_{f_w}(x)\\\partial_{\eta_{f_w}}w(x) \end{pmatrix}
    \end{align}
    so that
    the claimed formula for $\vec \mu$ follows. We next verify the variation identities. We obtain the normal speed $\langle\partial_{t,0} f_{t\varphi},\nu_{\bar f}\rangle = \varphi\langle \xi(\cdot,0),\nu_{\bar f}\rangle$. Therefore, the variation formulas for $H$ and $\nu$ follow, for instance using \cite[Lemma~2.3]{rupp2023}. The same reference yields $\partial_{t,0} g = -2A_{\bar f}\varphi\langle\xi(\cdot,0),\nu_{\bar f}\rangle$. Thus, the variation identity for $\eta$ follows from \eqref{eq:eta_g-expl-form} and the variation formula for the Laplace--Beltrami operator is due to \cite[Proposition~2.3.10]{topping2006}.
\end{proof}

\begin{proof}[Proof of \Cref{lem:first-variations-B-and-E}]    
    First, we verify the formula for $(\mathcal{B}^{(1)})'(0)$. 
    Using \Cref{lem:loc-coord-w-loja-sec},
    \begin{align}
        \partial_{t,0} \big(\partial_{\vec\mu^{(1)}(\cdot,t\varphi)}t\varphi - \vec\mu^{(2)}(\cdot,t\varphi)\big) = \partial_{\eta_{\bar f}} \varphi - \partial_{t,0}\vec\mu^{(2)}(\cdot,t\varphi).    \label{eq:linear-B1-2}
    \end{align}
    Notice that by \eqref{eq:vectormu}, \eqref{eq:adjoint}
    \begin{align}
        - \partial_{t,0}\vec\mu^{(2)}(\cdot,t\varphi) &= \langle (d\Phi^*_{(\cdot,0)})^{-1}.(0,-1),\partial_{t,0}N^S(\Phi(\cdot,t\varphi(\cdot)))\rangle  \\
        &\quad + \langle \partial_{t,0} (d\Phi^*_{(\cdot,t\varphi(\cdot))})^{-1}.(0,-1) , N^S(\bar f)\rangle\\
        &= \langle (d\Phi^*_{(\cdot,0)})^{-1}.(0,-1),\partial_{t,0}N^S(\Phi(\cdot,t\varphi(\cdot)))\rangle  \\
        &\quad - \langle  (d\Phi^*_{(\cdot,0)})^{-1}.\partial_{t,0}(d\Phi^*_{(\cdot,t\varphi(\cdot))}).(d\Phi^*_{(\cdot,0)})^{-1}.(0,-1) , N^S(\bar f)\rangle.\label{eq:linear-B1-1}
    \end{align}
    For any $w=d\Phi_{(x,0)}.(v,\rho)\in\R^3$, we compute for $x \in \partial \Sigma$
    $$\langle (d\Phi^*_{(\cdot,0)})^{-1}.(0,-1),w\rangle = (g_0(x)\otimes \dd r)\big((0,-1),(d\Phi_{(x,0)})^{-1}.w\big)=-\rho = -\langle\nu_{\bar f}, w\rangle,$$ so
    \begin{equation}
        (d\Phi^*_{(\cdot,0)})^{-1}.(0,-1)=-\nu_{\bar f} \quad\text{and}\quad (d\Phi_{(\cdot,0)})^{-1}N^S(\bar f)=\vec\mu(\cdot,0)=(\eta_{\bar f},0),
    \end{equation}
    using \Cref{lem:loc-coord-w-loja-sec} 
     in the second equation. Thus, combined with $\partial_r\Phi=\xi$, we obtain
    \begin{align}
        \langle  &(d\Phi^*_{(\cdot,0)})^{-1}.\partial_{t,0}(d\Phi^*_{(\cdot,t\varphi(\cdot))}).(d\Phi^*_{(\cdot,0)})^{-1}.(0,-1) , N^S(\bar f)\rangle \\
        &= \langle (d\Phi^*_{(\cdot,0)})^{-1}.(0,-1) , \partial_{t,0}(d\Phi_{(\cdot,t\varphi(\cdot))}).(d\Phi_{(\cdot,0)})^{-1}.N^S(\bar f)\rangle \\
        &= -\langle \nu_{\bar f} , \partial_{t,0}(d\Phi_{(\cdot,t\varphi(\cdot))}).(\eta_{\bar f},0)\rangle \\
        &= - \langle \nu_{\bar f}, \partial_{t,0}(\partial_{\eta_{\bar f}}\Phi(\cdot,t\varphi))\rangle = - \langle\nu_{\bar f}, \partial_{\eta_{\bar f}}(\xi(\cdot,0)) \varphi\rangle.
    \end{align}
    Plugging the above into \eqref{eq:linear-B1-1}, and using $N^S=\nabla d^S$ on $S$ and $\partial_{r,0}\Phi=\nu_{\bar f}$ on $\partial\Sigma$, yields
    \begin{align}
        - \partial_{t,0}&\vec\mu^{(2)}(\cdot,t\varphi) = -\nabla^2d^S\circ \bar f(\nu_{\bar f},\nu_{\bar f})\varphi - \langle\nu_{\bar f},\partial_{\eta_{\bar f}}(\xi(\cdot,0))\rangle \varphi. 
    \end{align}
    Finally, using \eqref{eq:xi-circ-barf} we have 
    $\langle\nu_{\bar f},\partial_{\eta_{\bar f}}\xi(\cdot,0)\rangle=0$ on $\partial \Sigma$. 
    Thus, by \eqref{eq:linear-B1-2},
    \begin{align}
        \partial_{t,0} \big(\partial_{\vec\mu^{(1)}(\cdot,t\varphi)}t\varphi - \vec\mu^{(2)}(\cdot,t\varphi)\big) = \partial_{\eta_{\bar f}} \varphi -\nabla^2d^S\circ \bar f(\nu_{\bar f},\nu_{\bar f})\varphi. 
    \end{align}

    Next, using \eqref{eq:lin-iden-H}, \eqref{eq:lin-iden-eta} and again $\xi(\cdot,0)=\nu_{\bar f}$ on $\partial\Sigma$, we have
    \begin{align}
        \partial_{t,0} &\Big( \frac{\partial_{\eta_{f_{t\varphi}}}H_{f_{t\varphi}}-(\nabla^2d^S)\circ f_{t\varphi}~(\nu_{f_{t\varphi}},\nu_{f_{t\varphi}})H_{f_{t\varphi}}}{\langle\xi(\cdot,t\varphi(\cdot)),\nu_{f_{t\varphi}}\rangle} \Big) \\
        &= \partial_{\eta_{\bar f}} (\partial_{t,0}H_{f_{t\varphi}}) - (\nabla^2d^S) \circ\bar f~(\nu_{\bar f},\nu_{\bar f})\partial_{t,0}H_{f_{t\varphi}} + d\varphi.\vec{\tilde b}_1 + \varphi\cdot \tilde b_0 \\
        &= \partial_{\eta_{\bar f}} \Delta_{g_{\bar f}}\varphi 
        + A^S~(\nu_{\bar f},\nu_{\bar f})\Delta_{g_{\bar f}}\varphi + d\varphi.\vec b_1 + \varphi\cdot b_0,
    \end{align}
    using \eqref{eq:abldistance},
    for suitable smooth coefficients $\vec{\tilde b}_1$, $\vec b_1$ and $\tilde b_0$, $b_0$ depending on $\bar f$. Finally, to verify \eqref{eq:deltaeprime}, we first observe that
    \begin{equation}
        \partial_{t,0} (\Delta_{g_{f_{t\varphi}}} H_{f_{t\varphi}}) = (\partial_{t,0} \Delta_{g_{f_{t\varphi}}}) H_{\bar f} + \Delta_{g_{\bar f}} (\partial_{t,0}H_{f_{t\varphi}}).
    \end{equation}
    Together with \eqref{eq:lin-iden-H} and \eqref{eq:lin-iden-LapBel}, we get that 
    \begin{equation}
        \frac{\partial_{t,0} (\Delta_{g_{f_{t\varphi}}} H_{f_{t\varphi}})}{\langle \xi(\cdot,0),\nu_{\bar f}\rangle} - \Delta_{g_{\bar f}}^2 \varphi \in W^{1,2}(\Sigma).
    \end{equation}
    Also using \eqref{eq:lin-iden-nu}, we have
    \begin{equation}
        \delta\E'(0).\varphi - \frac{\partial_{t,0} (\Delta_{g_{f_{t\varphi}}} H_{f_{t\varphi}})}{\langle \xi(\cdot,0),\nu_{\bar f}\rangle} = \partial_{t,0}\Big(\frac{\Delta_{g_{f_{t\varphi}}}H_{f_{t\varphi}} + |A_{f_{t\varphi}}^0|^2H_{f_{t\varphi}}}{\langle \xi(\cdot,t\varphi(\cdot)),\nu_{f_{t\varphi}}\rangle}\Big) - \frac{\partial_{t,0} (\Delta_{g_{f_{t\varphi}}} H_{f_{t\varphi}})}{\langle \xi(\cdot,0),\nu_{\bar f}\rangle} \in W^{1,2}(\Sigma),
    \end{equation}
    noting that
    \begin{align}
    \partial_{t,0}\Big(\frac{|A_{f_{t\varphi}}^0|^2H_{f_{t\varphi}}}{\langle \xi(\cdot,t\varphi(\cdot)),\nu_{f_{t\varphi}}\rangle}\Big)\in W^{2,2}(\Sigma).
    \end{align}
    In particular, \eqref{eq:deltaeprime} follows.
\end{proof}

\subsection{The \L ojasiewicz--Simon inequality for free boundary surfaces in Gaussian coordinates}

The following result is standard in literature. For instance, it can be obtained as a corollary of \cite[Chapter 5, Propositions~7.6 and 7.7]{taylor2023}.

\begin{lemma}\label{thm:neumann-bilaplace}
    For any Riemannian metric $g$ on $\Sigma$ with outer unit conormal $\eta$ as in \eqref{eq:eta_g-expl-form}, the operator
    \begin{align}
        &L\colon W^{4,2}(\Sigma)\to L^2(\Sigma)\times W^{\frac52,2}(\partial\Sigma)\times W^{\frac12,2}(\partial\Sigma),\\
        &u\mapsto Lu \vcentcolon= \big(\Delta_g^2 u, \frac{\partial u}{\partial\eta},\frac{\partial\Delta_gu}{\partial\eta}\big)
    \end{align}
    is a Fredholm operator of index $0$.
\end{lemma}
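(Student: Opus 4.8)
The plan is to recognise $\bigl(\Delta_g^2;\,\tfrac{\partial}{\partial\eta},\,\tfrac{\partial}{\partial\eta}\Delta_g\bigr)$ as an elliptic boundary value problem in the sense of Agmon--Douglis--Nirenberg and then to pin down its index by formal self-adjointness; this is precisely the situation covered by \cite[Chapter~5, Propositions~7.6 and 7.7]{taylor2023}. (We take $g$ smooth, as needed for $L$ to be a bounded operator and as in our applications.)

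\emph{Step 1: ellipticity.} The interior symbol of $\Delta_g^2$ is $|\xi|_g^4$, which is elliptic of order $4$. To check the Lopatinski--Shapiro (complementing) condition, fix $x\in\partial\Sigma$ and pass to $g$-adapted boundary-normal coordinates $(s,t)$, with $t\geq 0$ inside $\Sigma$, $\partial_t|_{t=0}=-\eta$, and $g(x)=\mathrm{Id}$; at $x$ the principal symbol of $\Delta_g^2$ is then $|\xi|^4$, and the complementing condition reduces to the model ODE on $[0,\infty)$: for a non-zero tangential covector $\xi'$ one considers $(\partial_t^2-|\xi'|^2)^2 v=0$, whose bounded solutions are $v(t)=(a+bt)e^{-|\xi'|t}$. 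Since $\partial_t v(0)=b-|\xi'|a$ and $(\partial_t^2-|\xi'|^2)v=-2|\xi'|b\,e^{-|\xi'|t}$, so that $\partial_t\bigl[(\partial_t^2-|\xi'|^2)v\bigr](0)=2|\xi'|^2 b$, imposing $\partial_\eta v(0)=0$ and $\partial_\eta\Delta_g v(0)=0$ forces $b=0$ and then, as $|\xi'|\neq 0$, also $a=0$. Hence only $v\equiv 0$ solves the homogeneous model, i.e.\ the complementing condition holds, and ADN theory yields the usual elliptic a priori estimates and regularity; in particular $L$ is Fredholm between the stated spaces.

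\emph{Step 2: index zero.} Green's formula for the bi-Laplacian gives, for $u,v\in W^{4,2}(\Sigma)$,
\begin{align}\label{eq:green-bilaplace}
\int_\Sigma\bigl((\Delta_g^2 u)\,v-u\,(\Delta_g^2 v)\bigr)\,\dd\mu_g
&=\int_{\partial\Sigma}\Bigl[(\partial_\eta\Delta_g u)\,v-u\,(\partial_\eta\Delta_g v)\\
&\qquad\quad-(\Delta_g u)\,\partial_\eta v+(\Delta_g v)\,\partial_\eta u\Bigr],
\end{align}
and all four boundary terms vanish once $\partial_\eta u=\partial_\eta v=0$ and $\partial_\eta\Delta_g u=\partial_\eta\Delta_g v=0$. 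Thus the boundary value problem is formally self-adjoint, its formal adjoint problem being again $\bigl(\Delta_g^2;\,\tfrac{\partial}{\partial\eta},\,\tfrac{\partial}{\partial\eta}\Delta_g\bigr)$. Since for an elliptic boundary value problem $\mathrm{coker}\,L$ is canonically isomorphic to the (finite-dimensional) kernel of the formal adjoint problem, we conclude $\dim\mathrm{coker}\,L=\dim\ker L$, hence $\mathrm{ind}\,L=0$. (Equivalently one can compute directly: multiplying $\Delta_g^2 u=0$ by $u$ and integrating by parts twice, the boundary terms vanishing by the boundary conditions, gives $\int_\Sigma|\Delta_g u|^2\,\dd\mu_g=0$, so $\Delta_g u\equiv 0$; then $u$ is $g$-harmonic with $\partial_\eta u\equiv 0$, hence constant on the connected surface $\Sigma$, so $\ker L=\R$, and $\mathrm{coker}\,L\cong\R$ by self-adjointness, whence $\mathrm{ind}\,L=1-1=0$. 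Alternatively, one homotopes within the Fredholm operators to $u\mapsto(\Delta_g^2 u+u,\partial_\eta u,\partial_\eta\Delta_g u)$, which is an isomorphism by the same integration by parts.)

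\emph{Main obstacle.} The one genuinely non-formal point is the verification of the Lopatinski--Shapiro condition in Step~1, together with the bookkeeping ensuring that the passage to $g$-adapted boundary-normal coordinates leaves the relevant principal symbols of $\Delta_g^2$, $\partial_\eta$, and $\partial_\eta\Delta_g$ in the expected form; once the model ODE computation is carried out, the rest is a textbook application of elliptic boundary-value theory and of the self-adjointness identity \eqref{eq:green-bilaplace}.
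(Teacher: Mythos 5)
Your proof is correct and is essentially the same argument the paper invokes: the paper simply cites \cite[Chapter~5, Propositions~7.6 and 7.7]{taylor2023}, which are precisely the ADN/Lopatinski--Shapiro regularity and index statements for regular elliptic boundary value problems that you verify explicitly (complementing condition for $\Delta_g^2$ with $\partial_\eta,\partial_\eta\Delta_g$, then formal self-adjointness via Green's identity to conclude index zero). In other words, you have filled in the details behind the paper's one-line citation rather than taken a genuinely different route.
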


\begin{proposition}\label{prop:PDE-argument}
    Let $\Sigma$ be equipped with a Riemannian metric $g$, consider a smooth function $m\colon\partial \Sigma\to\R$ as well as smooth coefficients $\vec b\colon\partial\Sigma\to T\Sigma$, $b\colon\partial\Sigma\to\R$. Then there exist constants $c_1,c_2>0$ such that $\tilde{T}\colon W^{4,2}(\Sigma)\to L^2(\Sigma)\times Y$, with $Y$ as defined in \eqref{eq:spaces}, given by
    \begin{equation}
        \varphi \mapsto \tilde{T} \varphi\vcentcolon=\big((\Delta_g)^2\varphi - c_1\Delta_g\varphi + c_2\varphi, \partial_{\eta}\varphi + m \varphi \big|_{\partial \Sigma}, \partial_{\eta}\Delta_g\varphi + m \Delta_g\varphi + d\varphi.\vec b + b\varphi \big|_{\partial \Sigma} \big) 
    \end{equation}
    is an isomorphism.
\end{proposition}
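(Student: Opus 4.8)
The plan is to deduce this from \Cref{thm:neumann-bilaplace} by a compact perturbation argument, and then to fix admissible $c_1,c_2$ via an energy estimate. First I would check that $\tilde T$ is a bounded linear operator $W^{4,2}(\Sigma)\to L^2(\Sigma)\times Y$: the interior component factors through $W^{2,2}(\Sigma)$, while the two boundary components are obtained by composing $d$ and $\Delta_g$ with the bounded trace operators $W^{4,2}(\Sigma)\to W^{5/2,2}(\partial\Sigma)$ and $W^{2,2}(\Sigma)\to W^{1/2,2}(\partial\Sigma)$, using that the conormal $\eta=\eta_g$ and the coefficients $m,\vec b,b$ are smooth. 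Writing $L$ for the operator from \Cref{thm:neumann-bilaplace} associated with $g$, the difference $\tilde T-L$ has interior component $\varphi\mapsto-c_1\Delta_g\varphi+c_2\varphi$, which factors as $W^{4,2}(\Sigma)\to W^{2,2}(\Sigma)\hookrightarrow L^2(\Sigma)$ with a compact embedding, and boundary components $\varphi\mapsto m\varphi|_{\partial\Sigma}$ and $\varphi\mapsto(m\Delta_g\varphi+d\varphi.\vec b+b\varphi)|_{\partial\Sigma}$, each of which lands in a Sobolev space of the boundary that embeds compactly into $W^{5/2,2}(\partial\Sigma)$ resp.\ $W^{1/2,2}(\partial\Sigma)$ by Rellich--Kondrachov on the compact manifolds $\Sigma$ and $\partial\Sigma$. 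Hence $\tilde T-L$ is compact, and since $L$ is Fredholm of index $0$, so is $\tilde T$, for \emph{every} choice of $c_1,c_2$.

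It therefore suffices to choose $c_1,c_2>0$ so that $\tilde T$ is injective; index zero then gives surjectivity, and the bounded inverse theorem that $\tilde T$ is an isomorphism. Let $\varphi\in W^{4,2}(\Sigma)$ with $\tilde T\varphi=0$. Testing $\Delta_g^2\varphi-c_1\Delta_g\varphi+c_2\varphi=0$ against $\varphi$ and integrating by parts twice yields
\begin{align}
\int_\Sigma(\Delta_g\varphi)^2+c_1\int_\Sigma|\nabla\varphi|^2+c_2\int_\Sigma\varphi^2=-\int_{\partial\Sigma}\varphi\,\partial_\eta\Delta_g\varphi+\int_{\partial\Sigma}(\Delta_g\varphi)\,\partial_\eta\varphi+c_1\int_{\partial\Sigma}\varphi\,\partial_\eta\varphi.
\end{align}
Now I would substitute the two boundary conditions $\partial_\eta\varphi=-m\varphi$ and $\partial_\eta\Delta_g\varphi=-m\Delta_g\varphi-d\varphi.\vec b-b\varphi$. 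The key structural point is that the two resulting copies of $\int_{\partial\Sigma}m\,\varphi\,\Delta_g\varphi$ cancel, leaving
\begin{align}
\int_\Sigma(\Delta_g\varphi)^2+c_1\int_\Sigma|\nabla\varphi|^2+c_2\int_\Sigma\varphi^2=\int_{\partial\Sigma}\varphi\,d\varphi.\vec b+\int_{\partial\Sigma}b\varphi^2-c_1\int_{\partial\Sigma}m\varphi^2.
\end{align}
Splitting $\vec b=\vec b^\top+(\vec b\cdot\eta)\eta$ along $\partial\Sigma$, using $\partial_\eta\varphi=-m\varphi$ to rewrite the normal part, and integrating the tangential part $\tfrac12\int_{\partial\Sigma}\vec b^\top(\varphi^2)$ by parts on the closed $1$-manifold $\partial\Sigma$, the right-hand side reduces to $\int_{\partial\Sigma}\tilde b_0\,\varphi^2-c_1\int_{\partial\Sigma}m\varphi^2$ for a smooth function $\tilde b_0$ depending only on $g,m,\vec b,b$.

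Finally I would absorb the boundary terms: by the trace inequality and interpolation, for every $\delta>0$ there is $C_\delta<\infty$ with $\|\varphi\|_{L^2(\partial\Sigma)}^2\le\delta\|\nabla\varphi\|_{L^2(\Sigma)}^2+C_\delta\|\varphi\|_{L^2(\Sigma)}^2$; choosing $\delta$ small (depending on $c_1$, $\|\tilde b_0\|_{C^0}$, $\|m\|_{C^0}$) bounds the right-hand side above by $\tfrac{c_1}{2}\|\nabla\varphi\|_{L^2(\Sigma)}^2+C_1\|\varphi\|_{L^2(\Sigma)}^2$ with $C_1=C_1(c_1,g,m,\vec b,b)$. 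Thus
\begin{align}
\int_\Sigma(\Delta_g\varphi)^2+\frac{c_1}{2}\int_\Sigma|\nabla\varphi|^2+(c_2-C_1)\int_\Sigma\varphi^2\le0.
\end{align}
Fixing, e.g., $c_1=1$ and then $c_2=C_1+1>0$, all three terms on the left are nonnegative, hence vanish, so $\varphi\equiv0$. This establishes injectivity and completes the argument. I expect the main obstacle to be the bookkeeping in the boundary integration by parts: one must verify precisely that the only genuinely second-order boundary contribution $\int_{\partial\Sigma}m\,\varphi\,\Delta_g\varphi$ cancels and that every remaining boundary term has the absorbable form $\int_{\partial\Sigma}(\text{smooth})\,\varphi^2$; the Fredholm part is routine once \Cref{thm:neumann-bilaplace} is granted.
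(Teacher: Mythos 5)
Your argument is correct, and while the skeleton is the same as the paper's (compact perturbation of the operator $L$ from \Cref{thm:neumann-bilaplace}, then injectivity via an energy estimate on $\ker\tilde T$), the injectivity step is handled more efficiently.

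After the cancellation of the two $\int_{\partial\Sigma} m\,\varphi\,\Delta_g\varphi$ terms — which the paper also uses — the paper estimates the surviving term $\int_{\partial\Sigma}\varphi\,d\varphi.\vec b$ by $C\int_{\partial\Sigma}|\varphi|\,|\nabla\varphi|$, and then passes through $\|\varphi\|_{W^{1,2}(\partial\Sigma)}$, the trace estimate into $\|\varphi\|_{W^{2,2}(\Sigma)}$, and the elliptic a priori estimate for the Neumann Laplacian to close the argument. This forces a careful order of choices: first $\varepsilon$ small, then $c_1$ large, then $c_2$ large. Your version sidesteps all of that by splitting $\vec b=\vec b^\top+(\vec b\cdot\eta)\eta$, rewriting the normal part via $\partial_\eta\varphi=-m\varphi$, and integrating the tangential part $\tfrac12\int_{\partial\Sigma}\partial_{\vec b^\top}(\varphi^2)$ by parts on the closed $1$-manifold $\partial\Sigma$; this eliminates the tangential derivative of $\varphi$ from the boundary integrand entirely, leaving only zeroth-order boundary terms $\int_{\partial\Sigma}(\text{smooth})\,\varphi^2$. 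These are absorbed with the elementary trace inequality $\|\varphi\|_{L^2(\partial\Sigma)}^2\le\delta\|\nabla\varphi\|_{L^2(\Sigma)}^2+C_\delta\|\varphi\|_{L^2(\Sigma)}^2$, so no control on $\|\varphi\|_{W^{2,2}(\Sigma)}$ and no largeness of $c_1$ are needed; you may fix $c_1=1$ and only take $c_2$ large. The price, small here, is the extra structural step of the tangential/normal decomposition of $\vec b$ and the integration by parts on $\partial\Sigma$. Both arguments are valid; yours is a genuine simplification of the injectivity part and would shorten the paper's proof. The Fredholm part of your write-up is essentially identical to the paper's.
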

\begin{proof}
    Consider the operator $K\colon W^{4,2}(\Sigma)\to L^2(\Sigma)\times Y$ with
    \begin{align}
        K^{(1)}\varphi&\vcentcolon=-c_1\Delta_g\varphi+c_2\varphi,\\
        K^{(2)}\varphi= m\varphi\big|_{\partial \Sigma}
        ,&\quad K^{(3)}\varphi\vcentcolon = m \Delta_g\varphi + d\varphi.\vec b + b\varphi  \big|_{\partial \Sigma}
    \end{align}
    where $c_1,c_2>0$ are fixed later. Note that $K$ 
    only depends on derivatives of $\varphi$ of up to second order. Combining \cite[Chapter 4, Propositions~4.4 and 4.5]{taylor2023}, $K$ is a compact operator.
    
    Since the operator $L$ in \Cref{thm:neumann-bilaplace} is Fredholm of index 0 and as $K$ is compact, using \cite[XVII, Corollaries~2.6 and 2.7]{lang1993}, $\tilde{T}=L+K$ is a Fredholm operator of index $0$. Therefore, in order to show that $\tilde{T}$ is an isomorphism, it is sufficient to verify injectivity of the operator. To this end, suppose that $\varphi\in W^{4,2}(\Sigma)$ satisfies $\tilde{T}\varphi=0$. That is,
    \begin{align}
        (\Delta_g)^2\varphi &= c_1\Delta_g\varphi - c_2\varphi\quad\text{on $\Sigma$,}\\
        \partial_{\eta}\varphi &= - m\varphi\quad\text{on $\partial\Sigma$ and}\\
        -\partial_{\eta}\Delta_g\varphi &= m \Delta_g\varphi + d\varphi.\vec b + b\varphi \quad\text{on $\partial\Sigma$}.
    \end{align}
    Integrating by parts, we get
    \begin{align}
        \int_{\Sigma} &|\Delta_g\varphi|^2\dd\mu = \int_{\partial \Sigma} \big(\Delta_g\varphi \ \partial_{\eta}\varphi -\partial_{\eta}\Delta_g\varphi \  \varphi\big)\dd s + \int_\Sigma (\Delta_g)^2\varphi \ \varphi\dd\mu\\
        &= \int_{\partial \Sigma} \Delta_g\varphi (-m\varphi) \dd s +\int_{\partial \Sigma} \varphi \big(m \Delta_g\varphi + d\varphi.\vec b + b\varphi\big)\dd s \\
        &\quad +c_1\int_\Sigma \Delta_g\varphi \ \varphi \dd\mu - c_2\int_\Sigma\varphi ^2\dd\mu\\
        &\leq C \int_{\partial\Sigma} \varphi^2 + |\varphi|\cdot|\nabla\varphi|\dd s +c_1\int_\Sigma \Delta_g\varphi \ \varphi \dd\mu - c_2\int_\Sigma\varphi^2\dd\mu
    \end{align}
    for a constant $C$ controlling the coefficients $\vec b$ and $b$ on $\partial\Sigma$ where $|\nabla\varphi|=|\nabla_g\varphi|_g$. Therefore, again integrating by parts, we have for all $\varepsilon\in(0,1)$
    \begin{align}
        \int_\Sigma |\Delta_g\varphi|^2 & +c_1|\nabla\varphi|^2+c_2\varphi^2\dd\mu \leq C \int_{\partial \Sigma} \varphi^2 + |\varphi|\cdot|\nabla\varphi| \dd s -c_1\int_{\partial \Sigma} m\varphi^2\dd s\\
        &\leq \varepsilon \|\varphi\|_{W^{1,2}(\partial \Sigma)}^2 + C (c_1+\frac{1}{\varepsilon})\|\varphi\|_{L^2(\partial \Sigma)}^2  \label{eq:inj-est}
    \end{align}
    for a constant $C$ changing from line to line, depending only on $L^\infty$-bounds on $\vec b$, $b$ and $m$. By \cite[Chapter 4, Proposition~4.5]{taylor2023}, for a constant additionally depending on the metric $g$,
    \begin{align}
        \int_\Sigma |\Delta_g\varphi|^2+c_1|\nabla\varphi|^2+c_2\varphi^2\dd\mu &\leq C \varepsilon \|\varphi\|_{W^{2,2}(\Sigma)}^2 + C (c_1+\frac{1}{\varepsilon})\|\varphi\|_{W^{1,2}(\Sigma)}^2 .\label{eq:inj-est-2}
    \end{align}
    Using that, by interpolation, we have
    \begin{equation}
        \|\varphi\|_{W^{1,2}(\Sigma)}^2 \leq C(g)\|\varphi\|_{W^{2,2}(\Sigma)} \|\varphi\|_{L^2(\Sigma)},
    \end{equation}
    \eqref{eq:inj-est-2} yields for a constant $C$ depending only on $g$ and on $L^\infty$-bounds on $\vec b$, $b$ and $m$ that, for all $\varepsilon\in(0,1)$,
    \begin{align}
        \int_\Sigma |\Delta_g\varphi|^2+c_1|\nabla\varphi|^2+c_2\varphi^2\dd\mu &\leq \varepsilon \|\varphi\|_{W^{2,2}(\Sigma)}^2 + \frac{C}{\varepsilon} (c_1^2+\frac{1}{\varepsilon^2})\|\varphi\|_{L^2(\Sigma)}^2 .\label{eq:inj-est-3}
    \end{align}
    By \cite[Chapter 5, Equation~(7.37)]{taylor2023},
    \begin{equation}
        \|\varphi\|_{W^{2,2}(\Sigma)}^2\leq C\big(\|\Delta_g\varphi\|_{L^2(\Sigma)}^2 + \|\partial_{\eta}\varphi\|_{W^{\frac12,2}(\partial \Sigma)}^2+\|\varphi\|_{W^{1,2}(\Sigma)}^2\big).
    \end{equation}    
    Again employing $\partial_{\eta}\varphi=-m\varphi$ on $\partial\Sigma$ and \cite[Chapter 5, Proposition~4.5]{taylor2023}, we have 
    \begin{equation}
        \|\varphi\|_{W^{2,2}(\Sigma)}^2\leq C\big(\|\Delta_g\varphi\|_{L^2(\Sigma)}^2 + \|\varphi\|_{W^{1,2}(\Sigma)}^2 \big)
    \end{equation}
    where $C$ depends on $g$ and $\|m\|_{C^1(\partial\Sigma)}$. Thus, first taking $\varepsilon$ to be sufficiently small, then choosing $c_1$ sufficiently large and finally, also depending on $\varepsilon$ and $c_1$, taking $c_2$ to be sufficiently large, \eqref{eq:inj-est-3} yields
    \begin{equation}
        \frac{1}{2} \int_\Sigma |\Delta_g\varphi |^2+|\nabla\varphi |^2+\varphi ^2\dd\mu \leq 0,
    \end{equation}
    so in particular that $\varphi=0$. This shows injectivity of $\tilde T$ with the above choices of $c_1$ and $c_2$. As $\tilde T$ is Fredholm of index $0$, the claim follows.
\end{proof}

\begin{proposition}\label{prop:submersion}
    The mapping $\mathcal{B}$ in \eqref{eq:boundcondi} is a submersion at $0$, that is, $\mathcal{B}'(0)\colon V\to Y$ is surjective. In particular, there exists $a_2>0$ such that, with $U_1$ as in \Cref{lem:def-fw},
    \begin{equation}
        U_2 \vcentcolon= \{w\in W^{4,2}(\Sigma) \mid \|w\|_{W^{4,2}}<a_2\} \subset U_1,
    \end{equation}
    and, for $V_0=\ker\mathcal{B}'(0)$, $\mathcal{M}=\mathcal{B}^{-1}(\{(0,0)\})\cap U_2$ is an analytic $V_0$-Banach manifold. Moreover, for all $w\in \mathcal{M}$, we have
    $T_w\mathcal{M} = \ker\mathcal{B}'(w)\subset W^{4,2}(\Sigma)$,    \begin{equation}\label{eq:deltaE}
        \E'(w)\varphi = \frac12 \int_{\Sigma} \delta\E(w) \cdot \varphi\langle\xi(\cdot,w(\cdot)),\nu_{f_w}\rangle^2\dd\mu_{f_w},
    \end{equation}
    for $\varphi\in \mathcal{T}_w\mathcal{M}$ and $\delta \E$ as in \eqref{eq:gradientWillmore}, and
    \begin{equation}\label{eq:loja-ass-1}
        \|\E'(w)\|_{(T_w\mathcal{M})^*} \leq C \|\delta\E(w)\|_{L^2(\dd\mu_{\bar{f}})} = C \|\delta\E(w)\|_{Z}.
    \end{equation}
\end{proposition}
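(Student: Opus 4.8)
I would first prove this in three stages, the first being to compare $\mathcal{B}'(0)$ with the operator $\tilde T$ of \Cref{prop:PDE-argument}. By \Cref{lem:first-variations-B-and-E}, choosing $g=g_{\bar f}$, $\eta=\eta_{\bar f}$, $m:=A^S(\nu_{\bar f},\nu_{\bar f})\big|_{\partial\Sigma}$, $\vec b:=\vec b_1$ and $b:=b_0$ (all smooth, as $\bar f$ is smooth and $S$ is analytic), the map $\mathcal{B}'(0)$ is exactly the pair of boundary components of $\tilde T$; that is, with $A:=(\Delta_{g_{\bar f}})^2-c_1\Delta_{g_{\bar f}}+c_2$ we have $\tilde T\varphi=(A\varphi,\mathcal{B}'(0)\varphi)\in L^2(\Sigma)\times Y$. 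Since \Cref{prop:PDE-argument} gives that $\tilde T$ is an isomorphism, for $y\in Y$ the element $\varphi:=\tilde T^{-1}(0,y)$ satisfies $\mathcal{B}'(0)\varphi=y$, so $\mathcal{B}'(0)$ is onto. Moreover $\ker\mathcal{B}'(0)=\tilde T^{-1}(L^2(\Sigma)\times\{0\})$ is closed and complemented by $C_0:=\tilde T^{-1}(\{0\}\times Y)$, on which $\mathcal{B}'(0)$ restricts to an isomorphism onto $Y$; hence $\mathcal{B}'(0)$ is a split surjection with complement $C_0$.

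\textbf{Stage 2 (manifold structure).} Next I would use that split surjections form an open subset of $\mathcal{L}(V,Y)$ — if $T|_{C_0}\colon C_0\to Y$ is an isomorphism, then so is $\tilde T|_{C_0}$ for $\tilde T$ close to $T$, and $(\tilde T|_{C_0})^{-1}\tilde T$ is a continuous projection of $V$ onto $C_0$ with kernel $\ker\tilde T$ — together with the analyticity of $w\mapsto\mathcal{B}'(w)$ from \Cref{lem:choice-of-bdryconditions} to pick $a_2>0$, small enough that also $U_2:=\{w\in W^{4,2}(\Sigma):\|w\|_{W^{4,2}}<a_2\}\subset U_1$ (via $W^{4,2}(\Sigma)\hookrightarrow C^1(\Sigma)$), so that $\mathcal{B}'(w)$ is a split surjection with complement $C_0$ for all $w\in U_2$; in particular $\ker\mathcal{B}'(w)\cong V/C_0\cong\ker\mathcal{B}'(0)=V_0$. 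Since $\bar f=f_0$ satisfies \eqref{intro-eq:fbc} we have $\mathcal{B}(0)=(0,0)$, so $0\in\mathcal{M}$. The analytic implicit function theorem \cite[XIV, Theorem 2.1]{lang1993}, applied near each point of $\mathcal{M}$, then exhibits $\mathcal{M}=\mathcal{B}^{-1}(\{(0,0)\})\cap U_2$ as an analytic submanifold of $W^{4,2}(\Sigma)$ modeled on $V_0$, with $T_w\mathcal{M}=\ker\mathcal{B}'(w)\subset W^{4,2}(\Sigma)$ for $w\in\mathcal{M}$, which I endow with the induced $W^{4,2}$-norm.

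\textbf{Stage 3 (first variation and estimate).} Finally, fix $w\in\mathcal{M}$ and $\varphi\in T_w\mathcal{M}=\ker\mathcal{B}'(w)$. The variation $t\mapsto f_{w+t\varphi}$ remains among immersions with $f(\partial\Sigma)\subset S$ (\Cref{lem:def-fw}), with variation field $X:=\partial_{t,0}f_{w+t\varphi}=\varphi\,\xi(\cdot,w(\cdot))$ and $\langle X,\nu_{f_w}\rangle=\varphi\langle\xi(\cdot,w(\cdot)),\nu_{f_w}\rangle$. I would check that $X$ is an admissible variation: along $\partial\Sigma$ it is tangent to $S$ because $\Phi(x,\cdot)\in S$ for $x\in\partial\Sigma$ (\Cref{prop:ex-gauss-coord}\eqref{item:PhivaluesS}), so $\xi=\partial_r\Phi$ is tangent to $S$ there, and $\partial_{t,0}\langle\nu_{f_{w+t\varphi}},N^S\circ f_{w+t\varphi}\rangle=0$ since this is the first component of $\mathcal{B}'(w)\varphi=0$. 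Then the first variation formula of \cite[Equations (2.19) and (2.20)]{alessandronikuwert2016} gives that $\partial_{t,0}\W(f_{w+t\varphi})$ equals $\tfrac12\int_\Sigma(\Delta_{g_{f_w}}H_{f_w}+|A^0_{f_w}|^2H_{f_w})\langle X,\nu_{f_w}\rangle\,\dd\mu_{f_w}$ plus a boundary term that is a multiple of $\int_{\partial\Sigma}\big(\tfrac{\partial H_{f_w}}{\partial\eta_{f_w}}+A^S(\nu_{f_w},\nu_{f_w})H_{f_w}\big)\langle X,\nu_{f_w}\rangle\,\dd s$, and the latter vanishes because $f_w$ satisfies the third line of \eqref{intro-eq:fbc}. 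Substituting $\Delta_{g_{f_w}}H_{f_w}+|A^0_{f_w}|^2H_{f_w}=\delta\E(w)\,\langle\xi(\cdot,w(\cdot)),\nu_{f_w}\rangle$ from \eqref{eq:gradientWillmore} yields
\[
    \E'(w)\varphi=\frac12\int_\Sigma\delta\E(w)\,\varphi\,\langle\xi(\cdot,w(\cdot)),\nu_{f_w}\rangle^2\,\dd\mu_{f_w},
\]
which is \eqref{eq:deltaE}. For \eqref{eq:loja-ass-1} I would then use Cauchy--Schwarz, the bound $\langle\xi(\cdot,w(\cdot)),\nu_{f_w}\rangle\in[\tfrac14,4]$ from \eqref{eq:estimate-xi0fw-in-nu-fw}, and a uniform two-sided bound on $\dd\mu_{f_w}/\dd\mu_{g_0}$ over $w\in U_2$ (from continuity of $w\mapsto\dd\mu_{f_w}/\dd\mu_{g_0}$, cf.\ \Cref{lem:analytic} and \cite{chillfasangovaschaetzle2009}, shrinking $a_2$ if needed) to get $|\E'(w)\varphi|\leq C\|\delta\E(w)\|_{L^2(\dd\mu_{f_w})}\|\varphi\|_{L^2(\dd\mu_{f_w})}\leq C\|\delta\E(w)\|_{L^2(\dd\mu_{\bar f})}\|\varphi\|_{W^{4,2}(\Sigma)}$; passing to the supremum over $\varphi\in T_w\mathcal{M}$ with $\|\varphi\|_{W^{4,2}}\leq 1$ and recalling $Z=L^2(\Sigma)$ finishes the proof.

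\textbf{Main obstacle.} The hard part is Stage 3 — showing the first variation of $\W$ along the \emph{non-normal} field $X=\varphi\,\xi(\cdot,w(\cdot))$ carries no leftover boundary contribution. This rests on the exact form of the natural boundary term from \cite{alessandronikuwert2016} (the third line of \eqref{intro-eq:fbc} annihilates it precisely) and on $\varphi\in\ker\mathcal{B}'(w)$ encoding first-order preservation of the geometric boundary conditions, so that $X$ is admissible and its tangential part is merely an infinitesimal reparametrization of $\Sigma$ fixing $\partial\Sigma$, under which $\W$ is invariant. Stages 1 and 2 are then essentially bookkeeping, given \Cref{lem:first-variations-B-and-E} and \Cref{prop:PDE-argument}.
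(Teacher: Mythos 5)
Your proposal follows essentially the same route as the paper. Stage 1 (surjectivity of $\mathcal{B}'(0)$ by restricting the isomorphism of \Cref{prop:PDE-argument} to the $Y$-component) is exactly the paper's argument, and Stage 2 is just an explicit unpacking of \Cref{rem:on-submanifolds}, which the paper cites directly for the manifold structure and the characterization $T_w\mathcal{M}=\ker\mathcal{B}'(w)$. Stage 3 matches the paper's Appendix~C proof in spirit: take an admissible variation, invoke the Alessandroni--Kuwert first variation formula, and use the admissibility of $X$ together with the natural boundary condition (third line of \eqref{intro-eq:fbc}) to kill the boundary terms; the $L^2$-bound \eqref{eq:loja-ass-1} then follows from Cauchy--Schwarz, \eqref{eq:estimate-xi0fw-in-nu-fw}, and uniform equivalence of the measures.

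One point of imprecision in Stage~3: the boundary integrand from \cite[Theorem~1]{alessandronikuwert2016} is not already a multiple of $\big(\partial_{\eta}H+A^S(\nu,\nu)H\big)\langle X,\nu\rangle$; it reads (up to a constant) as
$\varphi^{\perp}\partial_\eta H-(\partial_\eta\varphi^{\perp})H - \tfrac12 H^2\langle\varphi^\top,\eta\rangle_g$,
and one must first use the two linearized conditions you verify -- namely $\langle\varphi^\top,\eta\rangle_g=0$ on $\partial\Sigma$ (from $f_{w+t\varphi}(\partial\Sigma)\subset S$) and $\partial_\eta\varphi^{\perp}+A^S(\nu,\nu)\varphi^{\perp}=0$ (from the vanishing of the linearized first boundary condition, together with the tangency of $\varphi^\top$ to $\partial\Sigma$ and the relation $A(\tau,\eta)+A^S(\nu,\partial_\tau f)=0$) -- before the integrand collapses to $\varphi^{\perp}(\partial_\eta H+A^S(\nu,\nu)H)$, which then vanishes by the third line of \eqref{intro-eq:fbc}. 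Since you explicitly check exactly these two admissibility conditions, the underlying argument is sound; this is only a matter of presentation. A small wording slip in the ``Main obstacle'' paragraph: the tangential part $\varphi^\top$ is tangent to $\partial\Sigma$, not a reparametrization \emph{fixing} $\partial\Sigma$, although this does not affect the argument since the relevant boundary term is $\langle\varphi^\top,\eta\rangle_g=0$.
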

\begin{proof} 
    By \Cref{lem:first-variations-B-and-E}, we can apply \Cref{prop:PDE-argument} with $g=g_{\bar f}$, $m=A^S(\nu_{\bar f},\nu_{\bar f})$ and 
    $\vec b=\vec b_1$, $b=b_0$. In particular, restricting the isomorphism in \Cref{prop:PDE-argument} to the $Y$-component, we obtain that $\mathcal B'(0)\colon V\to Y$ is surjective and thus that $\mathcal B$ is a submersion at $0$. The remainder of the statement now follows from \Cref{rem:on-submanifolds}.

    The proofs of \eqref{eq:deltaE} and \eqref{eq:loja-ass-1} essentially follow \cite{alessandronikuwert2016} and are postponed to \Cref{app:techproofs}.
\end{proof}

\begin{lemma}\label{lem:d^2E-0-with-B'0-is-fredholm}
     With $Y$ as defined in \eqref{eq:spaces}, consider the operator,
    \begin{equation}
        T\colon W^{4,2}(\Sigma)\to L^2(\Sigma)\times Y,\qquad T\varphi \vcentcolon= \big((\delta\E)'(0)\varphi, \mathcal{B}'(0)\varphi\big),
    \end{equation}
    and let $L$ be as in \Cref{thm:neumann-bilaplace}. Then $K=T-L$ is a compact operator, in particular, $T$ is Fredholm of index $0$.
\end{lemma}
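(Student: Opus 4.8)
The plan is to apply \Cref{thm:neumann-bilaplace} with the (smooth) metric $g = g_{\bar f}$, so that the operators $\partial_\eta$ and $\Delta_g$ in the definition of $L$ agree with the $\partial_{\eta_{\bar f}}$ and $\Delta_{g_{\bar f}}$ appearing in $T$, and then to verify that $K \vcentcolon= T - L$ is compact by treating its interior component and its two boundary components separately.

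For the interior component $K^{(1)}\varphi = (\delta\E)'(0)\varphi - (\Delta_{g_{\bar f}})^2\varphi$, I would invoke \eqref{eq:deltaeprime} from \Cref{lem:first-variations-B-and-E}. Inspecting its proof, after subtracting the leading term $(\Delta_{g_{\bar f}})^2\varphi$ the remainder is a linear differential operator in $\varphi$ of order at most three with smooth coefficients on the smooth manifold $(\Sigma, g_{\bar f})$; hence $K^{(1)}$ is bounded from $W^{4,2}(\Sigma)$ into $W^{1,2}(\Sigma)$. Composing with the compact Rellich embedding $W^{1,2}(\Sigma)\hookrightarrow L^2(\Sigma)$ (cf.\ \cite[Chapter~4, Proposition~4.4]{taylor2023}) then shows that $K^{(1)}\colon W^{4,2}(\Sigma)\to L^2(\Sigma)$ is compact.

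For the boundary components, \eqref{eq:deltabprime} gives $K^{(2)}\varphi = A^S(\nu_{\bar f},\nu_{\bar f})\varphi|_{\partial\Sigma}$ and $K^{(3)}\varphi = \bigl(A^S(\nu_{\bar f},\nu_{\bar f})\Delta_{g_{\bar f}}\varphi + d\varphi.\vec b_{1} + b_0\varphi\bigr)|_{\partial\Sigma}$. By the trace theorem, $W^{4,2}(\Sigma)\to W^{\frac72,2}(\partial\Sigma)$ is bounded, $\varphi\mapsto\Delta_{g_{\bar f}}\varphi|_{\partial\Sigma}$ is bounded $W^{4,2}(\Sigma)\to W^{\frac32,2}(\partial\Sigma)$, and $\varphi\mapsto d\varphi|_{\partial\Sigma}$ is bounded $W^{4,2}(\Sigma)\to W^{\frac52,2}(\partial\Sigma)$; multiplication by the smooth coefficients $A^S(\nu_{\bar f},\nu_{\bar f})$, $\vec b_{1}$, $b_0$ preserves these regularities. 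Hence $K^{(2)}$ is bounded $W^{4,2}(\Sigma)\to W^{\frac72,2}(\partial\Sigma)$ and $K^{(3)}$ is bounded $W^{4,2}(\Sigma)\to W^{\frac32,2}(\partial\Sigma)$, and since the embeddings $W^{\frac72,2}(\partial\Sigma)\hookrightarrow W^{\frac52,2}(\partial\Sigma)$ and $W^{\frac32,2}(\partial\Sigma)\hookrightarrow W^{\frac12,2}(\partial\Sigma)$ of Sobolev spaces on the compact manifold $\partial\Sigma$ are compact, $K^{(2)}$ and $K^{(3)}$ are compact into the corresponding factors of $Y$. Altogether $K\colon W^{4,2}(\Sigma)\to L^2(\Sigma)\times Y$ is compact, whence $T = L + K$ is Fredholm of index $0$ by \Cref{thm:neumann-bilaplace} and the stability of the Fredholm index under compact perturbations, cf.\ \cite[XVII, Corollaries~2.6 and~2.7]{lang1993}.

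The only genuinely delicate point is the regularity bookkeeping in the boundary components: one must confirm that every term in $\mathcal{B}'(0)$ other than the leading normal-derivative terms $\partial_{\eta_{\bar f}}\varphi$ and $\partial_{\eta_{\bar f}}\Delta_{g_{\bar f}}\varphi$ factors through a boundary Sobolev space strictly more regular than the relevant factor of $Y = W^{\frac52,2}(\partial\Sigma)\times W^{\frac12,2}(\partial\Sigma)$. The roughest such term is $A^S(\nu_{\bar f},\nu_{\bar f})\Delta_{g_{\bar f}}\varphi|_{\partial\Sigma}$, which still lands in $W^{\frac32,2}(\partial\Sigma)$, strictly better than the required $W^{\frac12,2}(\partial\Sigma)$, so the compactness gain is not destroyed anywhere.
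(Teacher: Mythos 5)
Your argument is correct and follows essentially the same route as the paper: decompose $K$ into the interior component $K^{(1)}$ (handled via \eqref{eq:deltaeprime} and the compact embedding $W^{1,2}\hookrightarrow L^2$) and the two boundary components (handled via \eqref{eq:deltabprime}, trace estimates, and compactness of Sobolev embeddings on $\partial\Sigma$), then invoke invariance of the Fredholm property under compact perturbations. The paper's proof is terser but identical in substance; your more explicit regularity bookkeeping is a fine elaboration.
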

\begin{proof}
    Using \eqref{eq:deltaeprime},
    $$        
        K^{(1)}\colon W^{4,2}(\Sigma)\to W^{1,2}(\Sigma)
    $$
    and, for $\varphi\in W^{4,2}(\Sigma)$, \eqref{eq:deltabprime} yields
    \begin{align}
        (K^{(2)}\varphi,K^{(3)}\varphi) = \big(A^S(\nu_{\bar{f}},\nu_{\bar{f}})\varphi\Big|_{\partial \Sigma},A^S(\nu_{\bar{f}},\nu_{\bar{f}})\Delta_{g_{\bar{f}}}\varphi + d\varphi.\vec b_1 + b_0\varphi\Big|_{\partial \Sigma}\big).
    \end{align}
    By the compact Sobolev embeddings \cite[Chapter 4, Proposition~4.4]{taylor2023} and the continuity of the trace \cite[Chapter 4, Proposition~4.5]{taylor2023}, $K$ is a compact operator. The claim then follows from \Cref{thm:neumann-bilaplace} and \cite[XVII, Corollaries~2.6 and 2.7]{lang1993}.
\end{proof}

\begin{proposition}\label{prop:deltaeprime-ffi}
    The mapping $(\delta\E)'(0)\colon T_0\mathcal{M}\to Z=L^2(\Sigma)$ is a Fredholm operator of finite index and thus satisfies assumptions (ii) and (iii) in \Cref{intro-thm:loja-abstract}.
\end{proposition}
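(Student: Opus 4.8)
The plan is to obtain the Fredholm property of $(\delta\E)'(0)$ on $T_0\M$ from \Cref{lem:d^2E-0-with-B'0-is-fredholm} by ``factoring out'' the boundary operator $\B'(0)$, which is onto by \Cref{prop:submersion}. Recall that $T_0\M = \ker\B'(0) =: V_0$, which is complemented in $V = W^{4,2}(\Sigma)$ by the construction in \Cref{prop:submersion}; I would fix a closed complement $V_1$, so that $V = V_0\oplus V_1$ and the restriction $\B'(0)|_{V_1}\colon V_1\to Y$ is a topological isomorphism, and write $T\varphi = \big((\delta\E)'(0)\varphi,\B'(0)\varphi\big)$ for the index-$0$ Fredholm operator of \Cref{lem:d^2E-0-with-B'0-is-fredholm}.

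First, for (ii), I would note that if $\varphi = \varphi_0 + \varphi_1\in\ker T$ with $\varphi_i\in V_i$, then $0 = \B'(0)\varphi = \B'(0)|_{V_1}\varphi_1$ forces $\varphi_1 = 0$, and conversely every $\varphi_0\in V_0$ with $(\delta\E)'(0)\varphi_0 = 0$ lies in $\ker T$; hence $\ker T = \ker\big((\delta\E)'(0)|_{T_0\M}\big)$. As $T$ is Fredholm, this space is finite dimensional.

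Next, for (iii), I would introduce the block-triangular isomorphism
\[
\Psi\colon L^2(\Sigma)\times Y\to L^2(\Sigma)\times Y,\qquad \Psi(z,y) \vcentcolon= \big(z - (\delta\E)'(0)(\B'(0)|_{V_1})^{-1}y,\ y\big),
\]
which is bounded linear with bounded inverse $(z,y)\mapsto\big(z + (\delta\E)'(0)(\B'(0)|_{V_1})^{-1}y,\ y\big)$. Writing $\varphi=\varphi_0+\varphi_1$ and setting $y\vcentcolon=\B'(0)\varphi_1$ (so $\varphi_1=(\B'(0)|_{V_1})^{-1}y$ ranges over $V_1$ as $y$ ranges over $Y$), one has $T\varphi = \big((\delta\E)'(0)\varphi_0 + (\delta\E)'(0)(\B'(0)|_{V_1})^{-1}y,\ y\big)$, so a short computation gives $\Psi(\Imag T) = \Imag\big((\delta\E)'(0)|_{V_0}\big)\times Y$. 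Because $\Imag T$ is closed of finite codimension in $L^2(\Sigma)\times Y$, so is $\Psi(\Imag T)$, and therefore $W\vcentcolon= \Imag\big((\delta\E)'(0)|_{T_0\M}\big)$ is closed and of finite codimension in $Z = L^2(\Sigma)$; being finite-codimensional and closed it is complemented, which is (iii). Comparing dimensions yields moreover $\dim\ker\big((\delta\E)'(0)|_{T_0\M}\big) = \dim\ker T$ and $\operatorname{codim} W = \operatorname{codim}\Imag T$, so $(\delta\E)'(0)|_{T_0\M}$ has Fredholm index $0$.

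I do not expect a genuine obstacle: the only step needing a bit of care is the bookkeeping that splits off the $Y$-component via $\Psi$, together with the (elementary) observation that $L^2(\dd\mu_{\bar f})$ and $L^2(\dd\mu_{g_0})$ agree as Banach spaces, the densities being bounded above and below since $\bar f$ is a smooth immersion. All the analytic content is already contained in \Cref{lem:d^2E-0-with-B'0-is-fredholm} and \Cref{prop:submersion}.
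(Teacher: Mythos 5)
Your proof is correct, and it in fact shows the slightly stronger conclusion that $(\delta\E)'(0)|_{T_0\mathcal{M}}$ has Fredholm index $0$. The paper reaches the same conclusions more directly: since $T_0\mathcal{M}=\ker\mathcal{B}'(0)$, one has $\ker T\subset T_0\mathcal{M}$, and $T\varphi=\big((\delta\E)'(0)\varphi,0\big)$ for $\varphi\in T_0\mathcal{M}$, whence $\Imag\big(T|_{T_0\mathcal{M}}\big)=\Imag T\cap\big(L^2(\Sigma)\times\{0\}\big)$; closedness and finite codimension in $L^2(\Sigma)\times\{0\}$ then follow at once from the elementary fact that a closed, finite-codimensional subspace intersected with any closed subspace remains closed and of finite codimension in that subspace. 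Your route instead fixes a closed complement $V_1$ of $T_0\mathcal{M}=V_0$, uses that $\mathcal{B}'(0)|_{V_1}\colon V_1\to Y$ is an isomorphism, and conjugates $T$ by the block-triangular isomorphism $\Psi$ to obtain $\Psi(\Imag T)=\Imag\big((\delta\E)'(0)|_{V_0}\big)\times Y$. This costs the explicit choice of complement and some matrix bookkeeping, but it buys a transparent index count; the paper's argument is shorter but does not determine the index. Both arguments are sound: both rely on closedness of $\Imag T$ (automatic for Fredholm operators) and on the fact that a closed finite-codimensional subspace is complemented, and your observation that $L^2(\dd\mu_{\bar f})$ and $L^2(\dd\mu_{g_0})$ carry equivalent norms is the same point the paper uses implicitly.
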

\begin{proof}
    By \Cref{prop:submersion}, $T_0\mathcal{M}=\ker\mathcal{B}'(0)$. Consider the operator $T$ in \Cref{lem:d^2E-0-with-B'0-is-fredholm}. In order to show the claim, we only need to verify that the restriction $T|_{T_0\mathcal M}\colon T_0\mathcal{M}\to L^2(\Sigma)\times\{0\}$ is a Fredholm operator of finite index. Clearly, its kernel is given by $T_0\mathcal M\cap \ker T$ and thus still finite dimensional. Moreover, since $\mathrm{Im} (T)$ has finite codimension in $L^2(\Sigma)\times Y$, so does $\mathrm{Im}(T|_{T_0\mathcal M}) = \mathrm{Im}(T)\cap (L^2(\Sigma)\times\{0\})$ in $L^2(\Sigma)\times\{0\}$. 
\end{proof}

\begin{corollary}\label{cor:loja-gauss-coord}
    There exist $\theta\in (0,\frac12]$, $C'>0$ and $\sigma'>0$ such that, for all $w\in\mathcal{M}$ with $\|w\|_{W^{4,2}(\Sigma,\R)}<\sigma'$,
    \begin{equation}
        |\W(f_w)-\W(\bar{f})|^{1-\theta} \leq C' \|\nabla\W(f_w)\|_{L^2(\dd\mu_{\bar{f}})}.
    \end{equation}
\end{corollary}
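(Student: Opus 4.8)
The plan is to obtain \Cref{cor:loja-gauss-coord} as a direct application of the abstract result \Cref{intro-thm:loja-abstract}, to the analytic Banach manifold $\mathcal{M}$ produced in \Cref{prop:submersion}, with base point $u_0=0\in\mathcal{M}$ (recall $f_{w=0}=\bar f$), target space $Z=L^2(\Sigma)$, and the two maps $\E|_{\mathcal{M}}$ and $\delta\E|_{\mathcal{M}}$.

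First I would verify the three hypotheses of \Cref{intro-thm:loja-abstract}. By \Cref{lem:analytic}\eqref{item:analytic_4},\eqref{item:analytic_5}, $\E$ and $\delta\E$ are analytic on $U_1\supset\mathcal{M}$; composing with analytic inverse charts of the submanifold $\mathcal{M}$ (\Cref{prop:submersion}) shows they restrict to analytic maps on $\mathcal{M}$. Hypothesis \eqref{item:loja_abstract_1} is exactly the estimate \eqref{eq:loja-ass-1}, the $W^{4,2}$-norm on $T_w\mathcal{M}=\ker\mathcal{B}'(w)$ being the one induced by a tangent bundle chart. Hypotheses \eqref{item:loja_abstract_2} and \eqref{item:loja_abstract_3} follow from \Cref{prop:deltaeprime-ffi}: since $(\delta\E)'(0)\colon T_0\mathcal{M}\to Z$ is Fredholm of finite index, its kernel is finite dimensional and its image is closed and finite-codimensional, hence complemented. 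Thus \Cref{intro-thm:loja-abstract} provides $\theta\in(0,\tfrac12]$, $C<\infty$ and an open neighborhood of $0$ in $\mathcal{M}$ on which $|\E(w)-\E(0)|^{1-\theta}\leq C\|\delta\E(w)\|_Z$; since open $W^{4,2}$-balls form a neighborhood basis of $0$ in $\mathcal{M}$, this neighborhood contains $\{w\in\mathcal{M}:\|w\|_{W^{4,2}(\Sigma)}<\sigma'\}$ for some $\sigma'>0$.

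It then remains to rewrite this in geometric terms. We have $\E(w)=\W(f_w)$ and $\E(0)=\W(\bar f)$ by \eqref{eq:def-E}. Comparing \eqref{eq:gradientWillmore} with $\nabla\W(f)=\tfrac12(\Delta_gH+|A^0|^2H)\nu$ gives the pointwise identity $|\delta\E(w)|=2\,\langle\xi(\cdot,w(\cdot)),\nu_{f_w}\rangle^{-1}\,|\nabla\W(f_w)|$, whose prefactor lies in $[\tfrac12,8]$ by \eqref{eq:estimate-xi0fw-in-nu-fw}, so $|\delta\E(w)|$ and $|\nabla\W(f_w)|$ are pointwise comparable. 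Since $Z=L^2(\Sigma)$ is understood with respect to $g_0$, and since $f\mapsto \dd\mu_f/\dd\mu_{g_0}$ is continuous on $W^{4,2}_{\mathrm{imm}}$ (cf.\ \Cref{lem:analytic}), the measures $\mu_{f_w}$, $\mu_{\bar f}$ and $\mu_{g_0}$ are mutually comparable, uniformly for $\|w\|_{W^{4,2}}$ small and with constants depending only on $\bar f$. Hence $\|\delta\E(w)\|_Z\leq C\|\nabla\W(f_w)\|_{L^2(\dd\mu_{\bar f})}$, and combining the two displays (and renaming constants) yields the claim with a possibly smaller $\sigma'$.

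The argument is essentially bookkeeping, as the analytic content is entirely contained in the preceding results — the norm inequality \eqref{eq:loja-ass-1}, which rests on the third boundary condition in \eqref{intro-eq:fbc} arising as the natural one, and the Fredholm property of \Cref{prop:deltaeprime-ffi}, which rests on \Cref{prop:PDE-argument}. The only points needing a moment's care are that restriction to the submanifold $\mathcal{M}$ preserves analyticity, and that in the final reformulation the abstract $Z=L^2(\dd\mu_{g_0})$-norm is equivalent, uniformly near $\bar f$, to the geometric $L^2(\dd\mu_{\bar f})$- and $L^2(\dd\mu_{f_w})$-norms.
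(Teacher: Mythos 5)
Your proposal is correct and is precisely the paper's argument, unfolded: the paper's proof is a one-line citation of \Cref{intro-thm:loja-abstract}, \Cref{lem:analytic}\eqref{item:analytic_4},\eqref{item:analytic_5}, \eqref{eq:loja-ass-1}, \Cref{prop:deltaeprime-ffi}, \eqref{eq:def-E}, \eqref{eq:gradientWillmore}, and \eqref{eq:estimate-xi0fw-in-nu-fw}, which is exactly the list of ingredients you verify and assemble. The only genuinely nontrivial pieces (the norm inequality \eqref{eq:loja-ass-1} and the Fredholm property) are, as you note, supplied by the preceding propositions, and your bookkeeping of the pointwise comparison between $\delta\E(w)$ and $\nabla\W(f_w)$ and of the equivalence of the measures $\mu_{g_0}$, $\mu_{\bar f}$, $\mu_{f_w}$ is correct.
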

\begin{proof}
    The result is a direct consequence of \Cref{intro-thm:loja-abstract}, using \Cref{lem:analytic}\eqref{item:analytic_4},\eqref{item:analytic_5}, \eqref{eq:loja-ass-1}, \Cref{prop:deltaeprime-ffi}, \eqref{eq:def-E} and \eqref{eq:gradientWillmore} with \eqref{eq:estimate-xi0fw-in-nu-fw}.
\end{proof}

For \Cref{intro-thm:main-result-fbc}, we have to extend the \L ojasiewicz--Simon gradient inequality \Cref{cor:loja-gauss-coord} to immersions $f$ satisfying \eqref{intro-eq:fbc} which are not necessarily parametrized in the Gaussian coordinates with respect to $\bar f$. The main tool is the following result on the existence of reparametrizations.

Recall that we assume $\Sigma\subset\R^M$, see \Cref{rem:dpartialsigma}.

\begin{proposition}\label{prop:reparam-as-fw}
    Let $a_2$ as in \Cref{prop:submersion} and $0<\Tilde{a}\leq a_2$. Then there exists $\tilde{\varepsilon}>0$ such that, for any immersion $f\in W^{4,2}(\Sigma,\R^3)$ with 
    \begin{equation}
        f(\partial \Sigma)\subset S\quad\text{and}\quad \|f-\bar{f}\|_{W^{4,2}(\Sigma)}<\tilde{\varepsilon},
    \end{equation}
    there exist $w\in W^{4,2}(\Sigma)$ with $\|w\|_{W^{4,2}}<\tilde{a}$ and a $C^1$-diffeomorphism $\Psi\in W^{4,2}(\Sigma,\R^M)$ of $\Sigma$ such that $f=f_w\circ \Psi$. Moreover, also $\Psi^{-1}\in W^{4,2}(\Sigma,\R^M)$.
\end{proposition}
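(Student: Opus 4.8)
The plan is to realize $f$ as a generalized normal graph over $\bar f$ in the coordinates $\Phi$ from \Cref{prop:ex-gauss-coord} and to read off $\Psi$ from the induced base-point map.

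\emph{Step 1 (base-point and height maps).} By the Sobolev embedding $W^{4,2}(\Sigma)\hookrightarrow C^{2,\beta}(\Sigma)$ in dimension two, smallness of $\|f-\bar f\|_{W^{4,2}}$ gives smallness of $\|f-\bar f\|_{C^1}$. Fix a cover $\Sigma=\bigcup_j\Sigma_j'$ with $\overline{\Sigma_j'}\subset\Sigma_j$. Each $\Phi|_{\Sigma_j\times(-\bar r,\bar r)}$ is a diffeomorphism onto the open set $\mathcal U_j$ and $\bar f(\overline{\Sigma_j'})\subset\mathcal U_j$ is compact, so there is $\tilde\varepsilon>0$ such that $\|f-\bar f\|_{C^0}<\tilde\varepsilon$ forces $f(x)\in\mathcal U_j$ for $x\in\Sigma_j'$; I set $(\pi(x),\rho(x))\vcentcolon=(\Phi|_{\Sigma_j\times(-\bar r,\bar r)})^{-1}(f(x))$ there. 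By uniform continuity of the local inverses of $\Phi$ together with $\Phi(x,0)=\bar f(x)$, the point $(\pi(x),\rho(x))$ is $C^0$-close to $(x,0)$, so for $x\in\Sigma_i'\cap\Sigma_j'$ it lies in $(\Sigma_i\cap\Sigma_j)\times(-\bar r,\bar r)$, on which $\Phi$ is injective; hence the definitions coming from different charts agree. Thus $\pi\colon\Sigma\to\Sigma$ and $\rho\colon\Sigma\to(-\bar r,\bar r)$ are globally well defined with $\Phi(\pi(\cdot),\rho(\cdot))=f$, and by \Cref{thm:analytic_composition} and \Cref{lem:analit-local} applied to the smooth local inverses of $\Phi$ they belong to $W^{4,2}$, with $\|\pi-\mathrm{id}_\Sigma\|_{W^{4,2}}$ and $\|\rho\|_{W^{4,2}}$ arbitrarily small once $\tilde\varepsilon$ is.

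\emph{Step 2 ($\pi$ is a diffeomorphism of $\Sigma$).} From the definition of $\Phi_S$ and \Cref{rem:geomS} one has $d^S(\Phi_S(y,r))=d^S(\bar f(y))$, and since $\Phi=\Phi_S$ near $\partial\Sigma$ this yields $d^S(f(x))=d^S(\bar f(\pi(x)))$ for $x\in B_{\alpha_0}(\partial\Sigma)$. As $\bar f$ meets $S$ orthogonally along $\partial\Sigma$ by \eqref{intro-eq:fbc}, the function $d^S\circ\bar f$ is transverse to $0$ there, so near $\partial\Sigma$ it vanishes exactly on $\partial\Sigma$ and, say, is strictly negative on the interior side; since $d^S\circ f\equiv 0\equiv d^S\circ\bar f$ on $\partial\Sigma$ (using $f(\partial\Sigma)\subset S$) and $\|d^S\circ f-d^S\circ\bar f\|_{C^1}$ is small, also $d^S\circ f<0$ on $\mathrm{int}\,\Sigma\cap B_{\alpha_0}(\partial\Sigma)$. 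Via the displayed identity this forces $\pi(\partial\Sigma)\subset\partial\Sigma$ and $\pi\big(\mathrm{int}\,\Sigma\cap B_{\alpha_0}(\partial\Sigma)\big)\subset\mathrm{int}\,\Sigma$, while $\pi(x)\in\mathrm{int}\,\Sigma$ for $x$ away from $\partial\Sigma$ simply because $\pi$ is $C^0$-close to the identity. Since $D\pi$ is moreover $C^0$-close to the identity, $\pi$ is a $C^1$-small self-map of $\Sigma$ respecting the decomposition $\Sigma=\mathrm{int}\,\Sigma\sqcup\partial\Sigma$, hence a $C^1$-diffeomorphism of $\Sigma$ by a standard degree/covering argument (cf.\ the reparametrization step in \cite{stahl1996}); two-dimensional Sobolev calculus, in which $W^{4,2}$-diffeomorphisms form a group, then shows $\Psi\vcentcolon=\pi$ and $\Psi^{-1}$ lie in $W^{4,2}(\Sigma,\R^M)$.

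\emph{Step 3 (conclusion) and main obstacle.} Setting $w\vcentcolon=\rho\circ\Psi^{-1}\in W^{4,2}(\Sigma)$, we get $\|w\|_{W^{4,2}}<\tilde a\le a_2$ for $\tilde\varepsilon$ small, so $w\in U_2\subset U_1$, $f_w$ is defined by \Cref{lem:def-fw}, and
\[
 f_w(\Psi(x))=\Phi\big(\Psi(x),w(\Psi(x))\big)=\Phi(\pi(x),\rho(x))=f(x)\qquad\text{for all }x\in\Sigma,
\]
i.e.\ $f=f_w\circ\Psi$. The main obstacle is Step 2: one has to show that $\pi$ is not merely a $C^1$-small self-map but a diffeomorphism of the manifold \emph{with boundary}, i.e.\ that it maps $\partial\Sigma$ bijectively to $\partial\Sigma$ and $\mathrm{int}\,\Sigma$ to $\mathrm{int}\,\Sigma$; this is exactly where the special form $\Phi=\Phi_S$ near $\partial\Sigma$ (giving $d^S\circ f=d^S\circ\bar f\circ\pi$) and the orthogonality of $\bar f$ with $S$ come in. Verifying the $W^{4,2}$-regularity of $\Psi^{-1}$ is routine but also deserves care.
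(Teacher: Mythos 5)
Your proposal is correct, but it takes a genuinely different route from the paper. The paper sets up two auxiliary Banach manifolds: $\mathcal{R}$, the set of small $W^{4,2}$ vector fields $\zeta$ tangent to $\partial\Sigma$ along $\partial\Sigma$ (giving $C^1$-diffeomorphisms $\psi_\zeta(x)=\Pi^{\hat\Sigma}(x+\zeta(x))$ that preserve $\partial\Sigma$), and $\mathcal{N}$, the set of nearby immersions with $f(\partial\Sigma)\subset S$. It then shows that $F(w,\zeta)\vcentcolon=f_w\circ\psi_\zeta$ has an invertible differential at $(0,0)$ --- surjectivity of $dF_{(0,0)}$ using \eqref{intro-eq:fbc} to verify $\zeta\in T_0\mathcal{R}$ --- and applies the inverse function theorem on Banach manifolds (\Cref{thm:inv-fct}). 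You instead construct the pair $(\pi,\rho)=\Phi^{-1}\circ f$ explicitly by gluing local inverses of $\Phi$, and prove directly that $\pi$ is a $C^1$-diffeomorphism of $\Sigma$ preserving $\partial\Sigma$ and $\mathrm{int}\,\Sigma$, using the identity $d^S(\Phi_S(y,r))=d^S(\bar f(y))$ together with transversality of $d^S\circ\bar f$ along $\partial\Sigma$ (which is where \eqref{intro-eq:fbc} enters in your route as well). Both arguments use the same geometric input; the paper's IFT approach packages existence, uniqueness, and smallness of $(w,\Psi)$ cleanly, at the cost of setting up the Banach-manifold infrastructure in \Cref{lem:D-submanifold,lem:N-submanifold}, while yours is more explicit but defers to a degree/covering argument that a $C^1$-small boundary-preserving self-map of a compact surface with boundary is a diffeomorphism, and to Sobolev composition and inverse-map theory to get the $W^{4,2}$-regularity and smallness of $\Psi^{-1}$ and $w=\rho\circ\Psi^{-1}$. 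These last two points are standard (the latter is exactly the content of \Cref{rem:weak-inv-map} and \cite{campbellhenclkonopecky2015}), but they carry essentially the work that the paper's IFT hides; you have flagged them correctly as the places deserving care.
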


\begin{remark}\label{rem:weak-inv-map}
    Since $\Sigma\subset\R^M$ is compact, proceeding as in \cite[Theorem~4.2]{campbellhenclkonopecky2015}, we have the following. If $\Psi\in W^{4,2}(\Sigma,\R^M)$ is a $C^1$-diffeomorphism of $\Sigma$, then also $\Psi^{-1}\in W^{4,2}(\Sigma,\R^M)$, so the last statement in \Cref{prop:reparam-as-fw} is clear.
\end{remark}

The proof of \Cref{prop:reparam-as-fw} is postponed to \Cref{app:proof-prop:reparam-as-fw}.
We may now prove \Cref{intro-thm:main-result-fbc}.

\begin{proof}[Proof of \Cref{intro-thm:main-result-fbc}]
    Fix $\theta$, $C'$ and $\sigma'$ as in \Cref{cor:loja-gauss-coord}. Then choose $\sigma\leq\tilde{\varepsilon}$ in \Cref{prop:reparam-as-fw} applied with $\tilde{a} < \min\{\sigma',a_2\}$. In particular, using \Cref{prop:reparam-as-fw} and \Cref{rem:weak-inv-map}, if $f\in W^{4,2}(\Sigma,\R^3)$ with $\|f-\bar{f}\|_{W^{4,2}(\Sigma)}<\sigma$, then there exists $w\in W^{4,2}(\Sigma,\R)$ with $\|w\|_{W^{4,2}(\Sigma)}<\sigma'$ and a $C^1$-diffeomorphism $\Psi\in W^{4,2}(\Sigma,\R^M)$ of $\Sigma$ such that
    \begin{equation}
        f\circ\Psi = f_w.
    \end{equation}
    Since \eqref{intro-eq:fbc} is invariant with respect to reparametrizations and since $\tilde a<a_2$, we have $w\in \mathcal{M}$ (see \Cref{prop:submersion}). Thus, with \Cref{cor:loja-gauss-coord}
    \begin{align}
        |\W(f)-\W(\bar{f})|^{1-\theta} &= |\W(f_w)-\W(\bar{f})|^{1-\theta} \leq C'\|\nabla\W(f_w)\|_{L^2(\dd\mu_{\bar{f}})} \\
        &\leq C \|\nabla\W(f)\circ\Psi\|_{L^2(\dd\mu_{f\circ\Psi})} = C \|\nabla\W(f)\|_{L^2(\dd\mu_{f})}
    \end{align}
    where we use the uniform equivalence of the metrics $g_{f_w}$ and $g_{\bar{f}}$ in the $W^{4,2}( \Sigma,\R^3)$-ball of radius $\sigma'$ centered at $\bar{f}$.
\end{proof}

\section{Stability of the free boundary Willmore flow}\label{sec:freebdy}

We will use the same notation as in \Cref{sec:loja_Willmore}. A family of immersions $f\colon[0,T)\times \Sigma\to\R^3$ satisfying
\begin{equation}\label{eq:free-bdry-willmore-flow}
    \begin{cases}
        \langle \partial_t f,\nu_f\rangle = -\big(\Delta_{g_f}H_f+|A^0_f|^2H_f\big)&\text{on $[0,T)\times \Sigma$}\\
        f(0,\cdot)=f_0&\text{on $\Sigma$}\\
        f(t,\partial \Sigma)\subset S &\text{for $0\leq t<T$}\\
        \langle \nu_f, N^S\circ f\rangle = 0 &\text{on $[0,T)\times\partial \Sigma$}\\
        \frac{\partial H_f}{\partial\eta_{f}} + A^S(\nu_f,\nu_f)H_f = 0&\text{on $[0,T)\times\partial \Sigma$}
    \end{cases}  
\end{equation}
is called \emph{free boundary Willmore flow} starting in $f_0$. 
Here we assume that the initial datum $f_0\colon \Sigma \to \R^3$  is a smooth immersion satisfying also \eqref{intro-eq:fbc}.

\subsection{Free boundary Willmore flow in Gaussian coordinates}

Throughout this section, we fix a Willmore immersion $\bar f\colon \Sigma\to\R^3$ satisfying \eqref{intro-eq:fbc} and use this as the reference immersion for our Gaussian coordinate system $f=f_w$ near $\bar{f}$, see \Cref{lem:def-fw}. In particular, also recall $\vec\mu$ from \Cref{lem:reformulation-first-order-bdry-condition}, $\delta\E$ as defined in \eqref{eq:gradientWillmore} and the reformulated boundary conditions in \eqref{eq:boundcondi}.

\begin{lemma}\label{lem:fbwf-in-gauss-coord}
    Let $w\colon[0,T)\times \Sigma\to(-\bar r,\bar r)$ 
    smooth with $w(t)\in U_1\footnote{as in \Cref{lem:def-fw}}$ for all $0\leq t<T$ and set
     $f\vcentcolon=f_w$. Then $w$ solves
     \begin{equation}\label{eq:schauder-2}
        \begin{cases}
            \partial_t w = - \frac{1}{\langle\xi(\cdot,w(\cdot)),\nu_{f_w}\rangle}\big(\Delta_{g_{f_w}}H_{f_w}+|A^0_{f_w}|^2H_{f_w}\big) &\text{on $[0,T)\times \Sigma$}\\
            w(0,\cdot)=w_0&\text{on $\Sigma$}\\
            \partial_{\vec\mu^{(1)}(x,w(t,x))}w(x)-\vec\mu^{(2)}(x,w(t,x)) =0&\text{for $(t,x)\in [0,T)\times\partial \Sigma$}\\
            \frac{\partial H_{f_w}}{\partial \eta_{f_w}} - (\nabla^2 d^S)\circ f_w~(\nu_{f_w},\nu_{f_w})H_{f_w} = 0 &\text{on $[0,T)\times\partial \Sigma$}
        \end{cases}
    \end{equation}
     if and only if $f$ is a free boundary Willmore flow starting in $f_{w_0}$.    
\end{lemma}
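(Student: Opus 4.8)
The plan is to prove the equivalence by matching, line by line, the five equations of \eqref{eq:free-bdry-willmore-flow} with the four equations of \eqref{eq:schauder-2}, using the explicit form $f = f_w = \Phi(\cdot, w(\cdot,\cdot))$ together with the reformulations of the boundary conditions already established in this section.

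First I would differentiate $f_w$ in time. Since $\Phi$ is time-independent, the chain rule gives $\partial_t f_w = (\partial_r\Phi)(\cdot, w(\cdot,\cdot))\,\partial_t w = \xi(\cdot, w(\cdot,\cdot))\,\partial_t w$, hence $\langle \partial_t f_w, \nu_{f_w}\rangle = \langle\xi(\cdot, w(\cdot,\cdot)), \nu_{f_w}\rangle\,\partial_t w$. Because $w(t)\in U_1$ for all $t$, \eqref{eq:estimate-xi0fw-in-nu-fw} shows that the scalar factor $\langle\xi(\cdot, w(\cdot,\cdot)), \nu_{f_w}\rangle$ takes values in $[\tfrac14, 4]$, so it is nowhere zero and the evolution equation $\langle\partial_t f_w, \nu_{f_w}\rangle = -(\Delta_{g_{f_w}} H_{f_w} + |A^0_{f_w}|^2 H_{f_w})$ on $[0,T)\times\Sigma$ is equivalent, after dividing through by this factor, to the first line of \eqref{eq:schauder-2}. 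Observe that the definition \eqref{eq:free-bdry-willmore-flow} only prescribes the normal component of $\partial_t f$, so the tangential part of $\partial_t f_w = \xi(\cdot, w)\,\partial_t w$ is left unconstrained and no further condition arises here.

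Next I would dispatch the initial and boundary conditions, noting first that $f_w(t,\cdot)$ is automatically an immersion for each $t$ since $w(t)\in U_1\subset U_0$ (\Cref{lem:def-fw}), so that the notion of a free boundary Willmore flow applies. For the initial datum, $f_w(0,\cdot) = \Phi(\cdot, w(0,\cdot))$ equals $f_{w_0} = \Phi(\cdot, w_0)$ if and only if $w(0,\cdot) = w_0$, since each $\Phi|_{\Sigma_j\times(-\bar r,\bar r)}$ is a diffeomorphism by \Cref{prop:ex-gauss-coord} and $\Sigma = \bigcup_j \Sigma_j$. The condition $f_w(t,\partial\Sigma)\subset S$ holds automatically for every $w(t)\in U_1$ by \Cref{lem:def-fw} (see also \Cref{prop:ex-gauss-coord}\eqref{item:PhivaluesS}), which is why it does not appear in \eqref{eq:schauder-2}. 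The orthogonality condition $\langle\nu_{f_w}, N^S\circ f_w\rangle = 0$ on $\partial\Sigma$ is, by \Cref{lem:reformulation-first-order-bdry-condition} applied at each time slice, equivalent to $\partial_{\vec\mu^{(1)}(x,w(t,x))} w(t,x) - \vec\mu^{(2)}(x, w(t,x)) = 0$ for $x\in\partial\Sigma$, which is the third line of \eqref{eq:schauder-2}. Finally, by \eqref{eq:abldistance} one has $A^S(\nu,\nu) = -(\nabla^2 d^S)(\nu,\nu)$ on $\mathcal{U}_S$, so the natural boundary condition $\frac{\partial H_{f_w}}{\partial\eta_{f_w}} + A^S(\nu_{f_w},\nu_{f_w})H_{f_w} = 0$ is literally the fourth line of \eqref{eq:schauder-2}, and, since $\langle\xi(\cdot,w),\nu_{f_w}\rangle$ is nowhere zero, also equivalent to the vanishing of the second component of $\mathcal{B}(w(t))$ in \eqref{eq:boundcondi}. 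Combining these equivalences yields the claim.

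Since every step reduces to a result recorded earlier in \Cref{sec:loja_Willmore}, there is no genuine obstacle; the only points deserving a word of care are the non-vanishing of $\langle\xi(\cdot,w),\nu_{f_w}\rangle$, which is what licenses dividing the evolution equation by it, and the local injectivity of $w\mapsto f_w$ used to identify the initial data.
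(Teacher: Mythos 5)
Your proof is correct and follows essentially the same route as the paper's: the key identity $\langle\partial_t f_w,\nu_{f_w}\rangle = \langle\xi(\cdot,w),\nu_{f_w}\rangle\,\partial_t w$ combined with the non-vanishing of the factor from \eqref{eq:estimate-xi0fw-in-nu-fw}, and the boundary-condition equivalences from \Cref{lem:def-fw} and \Cref{lem:reformulation-first-order-bdry-condition}. You merely spell out more explicitly the identification of the initial datum via injectivity of $\Phi(x,\cdot)$, the automatic membership $f_w(\partial\Sigma)\subset S$, and the observation that the flow equation only constrains the normal velocity—details the paper leaves implicit.
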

\begin{proof}
    Consider a smooth family $w\colon[0,T)\times \Sigma\to (-\bar r,\bar r)$, set $f\colon[0,T)\times \Sigma\to \R^3$, $f(t,x)=f_{w(t,\cdot)}(x)=\Phi(x,w(t,x))$. Using that 
    \begin{equation}
        \partial_tw\cdot\langle \xi(\cdot,w(\cdot)),\nu_{f_w}\rangle  = \langle \partial_tf,\nu_f\rangle,
    \end{equation}
    the claim follows using that the boundary conditions are equivalent by \Cref{lem:def-fw,lem:reformulation-first-order-bdry-condition}.
\end{proof}

To study short-time existence and apply Schauder estimates for the quasilinear parabolic boundary problem \eqref{eq:schauder-2}, we rewrite the PDE and the boundary conditions as follows. 

First, given two metrics $g_1$ and $g_2$ on $\Sigma$, where $g_1\in C^0$ and $g_2\in C^2$, we define the differential operator $\mathbb{D}(g_1,g_2)\vcentcolon= \mathrm{tr}_{g_1}\nabla^2_{g_2}$, mapping from $W^{2,2}(\Sigma,\R)$ to $L^2(\Sigma,\R)$. Note that $\mathbb{D}(g_2,g_2)=\Delta_{g_2}$ is the Laplace--Beltrami operator. This allows us to identify the PDE and the boundary operators as quasilinear differential operators of the appropriate orders and lower order terms.

For the PDE in \eqref{eq:schauder-2}, let $w_1, w_2\in U_1$ and define $\mathbb A(w_1).w_2 \vcentcolon= \mathbb{D}(g_{f_{w_1}},g_{\bar f})^2.w_2$. In local coordinates $x=(x^1,x^2)$, we have
\begin{align}
    \mathbb A(w_1).w_2 &= g_{f_{w_1}}^{ij}g_{f_{w_1}}^{kl} \partial^4_{ijkl} w_2 + \sum_{0<|\alpha|\leq 3} b_{\alpha,0}\big(\bar \Gamma,\partial \bar \Gamma,\partial^2 \bar \Gamma,g_{f_{w_1}},\partial g_{f_{w_1}},\partial^2 g_{f_{w_1}}\big)\partial^\alpha w_2 \label{eq:local-coords-mathbb-A}
\end{align}
for smooth functions $b_{\alpha,0}$ where $\bar\Gamma = \bar\Gamma^k_{ij}$ denotes the Christoffel symbols of $g_{\bar f}$ and $\partial$ denotes differentiation in the local coordinates. Moreover, a direct computation yields
\begin{equation}\label{eq:def-F0}
    \frac{1}{\langle\xi(\cdot,w(\cdot)),\nu_{f_w}\rangle}\big(\Delta_{g_{f_w}}H_{f_w}+|A^0_{f_w}|^2H_{f_w}\big) - \mathbb A(w).w = F_0(\cdot,w,\bar \nabla w,\bar\nabla^2 w,\bar\nabla^3w)
\end{equation}
for a smooth function $F_0$ where $\bar\nabla$ denotes covariant differentiation with respect to $g_{\bar f}$. Furthermore, letting 
\begin{align}
    \mathbb B_1(w_1).w_2 &\vcentcolon= \partial_{\vec\mu^{(1)}(\cdot,w_1(\cdot))}w_2|_{\partial\Sigma},\\
    \mathbb B_2(w_1).w_2 &\vcentcolon= \partial_{\eta_{f_{w_1}}}\mathbb{D}(g_{f_{w_1}},g_{\bar f}).w_2 |_{\partial\Sigma},
\end{align}
we have the following. Let $w_1\in U_1$ such that $\partial_{\eta_{f_{w_1}}}f_{w_1} = N^S\circ f_{w_1}$ on $\partial\Sigma$, and let $w_2\in U_1$. Then, in local coordinates $x=(x^1,x^2)\colon U\overset{\approx}{\to}\Omega\subset \R\times[0,\infty)$ in a neighborhood $U$ of a point in $\partial\Sigma$,
using that $\vec\mu^{(1)}(\cdot,w_1(\cdot))=\eta_{f_{w_1}}$ by \Cref{lem:loc-coord-w-loja-sec} and \eqref{eq:eta_g-expl-form}, we find
\begin{align}
    \mathbb B_1(w_1).w_2 &= - (g^{22}_{f_{w_1}})^{-\frac12}g_{f_{w_1}}^{2i}\partial_iw_2,\\
    \mathbb B_2(w_1).w_2 &= -(g^{22}_{f_{w_1}})^{-\frac12}g_{f_{w_1}}^{2i}g_{f_{w_1}}^{kl}\partial^3_{ikl}w_2 + \sum_{0<|\alpha|\leq 2} b_{\alpha,2}\big(\bar\Gamma,\partial \bar \Gamma,g_{f_{w_1}},\partial g_{f_{w_1}}\big)\partial^\alpha w_2 \label{eq:loc-coords-mathbb-Bi}
\end{align}
for suitable smooth functions $b_{\alpha,2}$. Moreover, for $w\in U_1$, we have
\begin{align}
    & \partial_{\vec\mu^{(1)}(\cdot,w(\cdot))}w-\vec\mu^{(2)}(\cdot,w(\cdot)) -\mathbb B_1(w).w = F_1(\cdot,w(\cdot))\quad\text{and} \\
    &\frac{1}{\langle\xi(\cdot,w(\cdot)),\nu_{f_w}\rangle}\big(\partial_{\eta_{f_w}} H_{f_w} - (\nabla^2 d^S)\circ f_w~(\nu_{f_w},\nu_{f_w})H_{f_w}\big)-\mathbb B_2(w).w \\
    &\qquad = F_2(\cdot,w,\bar \nabla w,\bar\nabla^2 w)\label{eq:def-F1-F2}
\end{align}
for smooth functions $F_1,\ F_2$.

\begin{remark}\label{rem:good-atlas}
    We refer to \cite[Section~1.3]{metsch2023} for the definition of (parabolic) Hölder spaces on $\Sigma$. Especially, consider a \emph{good atlas} $\mathcal{A}=\{(U_i,\varphi_i)\}_{i=1}^N$ of $\Sigma$, that is, $V_i=\varphi_i(U_i)$ are convex and open in $\R\times[0,\infty)$, and the charts can be smoothly extended. 
    Then $C^{k,\gamma}(\Sigma)=C^{k,\gamma}_{\mathcal{A}}(\Sigma)$ can be characterized in the local coordinates of this good atlas, see \cite[Lemma~1.3.3]{metsch2023}, and similarly for the parabolic Hölder spaces $C^{\frac{k+\gamma}4,k+\gamma}([0,T]\times\Sigma)$, see \cite[Lemma~1.3.5]{metsch2023} where $k\in\N_0$ and $\gamma\in(0,1)$.
\end{remark}

\begin{proposition}
\label{prop:ste-at-0}
    Let $\alpha\in (0,1)$, $T_0>0$ and $\delta>0$ be arbitrary. There exists $\varepsilon=\varepsilon(\alpha,\bar{f},\delta,T_0)>0$ with the following property. For all $w_0\in C^{4,\alpha}(\Sigma)$ with $\|w_0\|_{C^{4,\alpha}(\Sigma)}<\varepsilon$ such that $f_{w_0}$ satisfies \eqref{intro-eq:fbc}, there exists $w\in C^{\frac{4+\alpha}{4},4+\alpha}([0,T_0]\times \Sigma)$ solving \eqref{eq:schauder-2} and satisfying 
    \begin{equation}
        \|w\|_{C^{\frac{4+\alpha}{4},4+\alpha}([0,T_0]\times \Sigma)} < \delta.
    \end{equation}
\end{proposition}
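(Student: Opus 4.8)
The plan is to solve \eqref{eq:schauder-2} by a contraction argument around the stationary solution $w\equiv 0$, which is stationary precisely because $\bar f$ is a free boundary Willmore immersion, so that $\delta\E(0)=0$ and $\mathcal{B}(0)=0$. First I would freeze the full linearization at $0$: with $\mathcal{L}_0\vcentcolon=(\delta\E)'(0)$ and the linearized boundary map $\mathcal{B}'(0)$ from \Cref{lem:first-variations-B-and-E}, the system \eqref{eq:schauder-2} is equivalent to
\begin{equation*}
    \partial_t w+\mathcal{L}_0 w=\mathcal{R}_0(w),\qquad \mathcal{B}'(0)w=\mathcal{R}_{\partial}(w)\ \text{ on }\ \partial\Sigma,\qquad w(0,\cdot)=w_0,
\end{equation*}
where $\mathcal{R}_0(w)\vcentcolon=\mathcal{L}_0 w-\delta\E(w)$ and $\mathcal{R}_{\partial}(w)\vcentcolon=\mathcal{B}'(0)w-\mathcal{B}(w)$; by \eqref{eq:local-coords-mathbb-A}, \eqref{eq:def-F0} and \eqref{eq:def-F1-F2} these coincide with the remainders isolated there. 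The point of this splitting is that $\mathcal{R}_0(0)=0$, $\mathcal{R}_{\partial}(0)=0$ and, since $\mathcal{L}_0$ and $\mathcal{B}'(0)$ are by construction the Fréchet derivatives of $\delta\E$ and $\mathcal{B}$ at $0$, also $\mathcal{R}_0'(0)=0$ and $\mathcal{R}_{\partial}'(0)=0$. Arguing as in \Cref{lem:analytic} but in parabolic Hölder spaces, and using \Cref{prop:ex-gauss-coord} and \Cref{lem:def-fw}, one moreover checks that $\mathcal{R}_0$ and $\mathcal{R}_{\partial}$ are smooth superposition operators on a neighborhood of $0$ in $C^{\frac{4+\alpha}{4},4+\alpha}([0,T_0]\times\Sigma)$ with values in the appropriate parabolic Hölder spaces; in particular they are $C^1$.

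The heart of the matter is the linear theory for $\partial_t+\mathcal{L}_0$ with the boundary operators given by $\mathcal{B}'(0)$. By \Cref{lem:first-variations-B-and-E}, $\partial_t+\mathcal{L}_0$ has principal part $\partial_t+\Delta_{g_{\bar f}}^2$, which is uniformly parabolic, while the two boundary operators have principal parts $\partial_{\eta_{\bar f}}$ and $\partial_{\eta_{\bar f}}\Delta_{g_{\bar f}}$, of orders $1$ and $3$. I would verify that this pair satisfies the parabolic Lopatinskii--Shapiro condition by analysing the constant-coefficient model problem on a half-space; this is the parabolic analogue of the elliptic complementing condition implicit in \Cref{thm:neumann-bilaplace}, and lower-order terms are irrelevant to the symbol computation. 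Localizing via a good atlas as in \Cref{rem:good-atlas} and patching, parabolic Schauder theory then provides a bounded linear solution operator $\mathcal{S}$: for a right-hand side $h\in C^{\frac{\alpha}{4},\alpha}([0,T_0]\times\Sigma)$, boundary data $\psi$ in the parabolic Hölder spaces naturally associated with boundary operators of orders $1$ and $3$, and initial datum $\varphi_0\in C^{4,\alpha}(\Sigma)$, subject to the single (zeroth-order) compatibility condition $\mathcal{B}'(0)\varphi_0=\psi(0,\cdot)$ on $\partial\Sigma$ -- no higher-order compatibility is needed since $\tfrac{4+\alpha}{4}<2$ -- there is a unique solution $\varphi=\mathcal{S}(h,\psi,\varphi_0)\in C^{\frac{4+\alpha}{4},4+\alpha}([0,T_0]\times\Sigma)$ of the linearized system, with $\|\varphi\|_{C^{\frac{4+\alpha}{4},4+\alpha}}\le C_0(\|h\|+\|\psi\|+\|\varphi_0\|)$ for some $C_0=C_0(\alpha,\bar f,T_0)$. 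I expect this linear step -- the Lopatinskii--Shapiro verification together with the bookkeeping of parabolic Hölder norms, compatibility, and the dependence of $C_0$ on the fixed interval $[0,T_0]$ -- to be the main obstacle; the underlying linear machinery may be imported from the parabolic theory used in \cite{metsch2022,metsch2023}.

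Finally I would run the fixed-point argument. Given $\delta>0$, choose $\rho\in(0,\delta)$ so small that every element of $X_\rho\vcentcolon=\{w\in C^{\frac{4+\alpha}{4},4+\alpha}([0,T_0]\times\Sigma)\mid w(0,\cdot)=w_0,\ \|w\|_{C^{\frac{4+\alpha}{4},4+\alpha}}\le\rho\}$ satisfies $w(t,\cdot)\in U_1$ for all $t\in[0,T_0]$ -- possible since $C^{\frac{4+\alpha}{4},4+\alpha}([0,T_0]\times\Sigma)\hookrightarrow C^0([0,T_0];C^1(\Sigma))$ -- so that $X_\rho$ is a nonempty complete metric space (it contains the constant-in-time extension of $w_0$). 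Because $\mathcal{R}_0,\mathcal{R}_{\partial}$ are $C^1$ with $\mathcal{R}_0(0)=\mathcal{R}_{\partial}(0)=0$ and $\mathcal{R}_0'(0)=\mathcal{R}_{\partial}'(0)=0$, after shrinking $\rho$ we obtain $\|\mathcal{R}_0(w)-\mathcal{R}_0(v)\|+\|\mathcal{R}_{\partial}(w)-\mathcal{R}_{\partial}(v)\|\le\omega(\rho)\,\|w-v\|_{C^{\frac{4+\alpha}{4},4+\alpha}}$ for all $w,v\in X_\rho$, with $\omega(\rho)\to0$ as $\rho\to0$. For $w\in X_\rho$ the triple $(\mathcal{R}_0(w),\mathcal{R}_{\partial}(w),w_0)$ is admissible for $\mathcal{S}$: at $t=0$, using $w(0,\cdot)=w_0$ and that $f_{w_0}$ satisfies \eqref{intro-eq:fbc}, hence $\mathcal{B}(w_0)=0$, we get $\mathcal{R}_{\partial}(w)(0,\cdot)=\mathcal{B}'(0)w_0-\mathcal{B}(w_0)=\mathcal{B}'(0)w_0$, so the compatibility condition holds. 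Therefore $\mathcal{T}(w)\vcentcolon=\mathcal{S}(\mathcal{R}_0(w),\mathcal{R}_{\partial}(w),w_0)$ satisfies $\mathcal{T}(w)(0,\cdot)=w_0$ and, using $\mathcal{R}_0(0)=\mathcal{R}_{\partial}(0)=0$,
\begin{equation*}
    \|\mathcal{T}(w)\|_{C^{\frac{4+\alpha}{4},4+\alpha}}\le C_0\big(\omega(\rho)\,\rho+\|w_0\|_{C^{4,\alpha}}\big),\qquad \|\mathcal{T}(w)-\mathcal{T}(v)\|_{C^{\frac{4+\alpha}{4},4+\alpha}}\le C_0\,\omega(\rho)\,\|w-v\|_{C^{\frac{4+\alpha}{4},4+\alpha}}.
\end{equation*}
Shrinking $\rho$ once more so that $C_0\,\omega(\rho)\le\tfrac12$, and then putting $\varepsilon=\varepsilon(\alpha,\bar f,\delta,T_0)\le\rho/(2C_0)$, the map $\mathcal{T}$ is a $\tfrac12$-contraction of $X_\rho$ into itself whenever $\|w_0\|_{C^{4,\alpha}(\Sigma)}<\varepsilon$. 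By Banach's fixed point theorem there is $w\in X_\rho$ with $\mathcal{T}(w)=w$; by construction it solves \eqref{eq:schauder-2} and satisfies $\|w\|_{C^{\frac{4+\alpha}{4},4+\alpha}([0,T_0]\times\Sigma)}\le\rho<\delta$, as required. (Equivalently, one could apply the inverse function theorem to the $C^1$ map $w\mapsto\big(\partial_t w+\delta\E(w),\,\mathcal{B}(w),\,w(0,\cdot)\big)$, whose derivative at $w=0$ is the linear isomorphism furnished by $\mathcal{S}$.)
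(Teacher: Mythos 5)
Your proof is correct and follows essentially the same route as the paper: you freeze the linearization at $0$, invoke linear parabolic Schauder theory with the Lopatinskii--Shapiro condition to obtain a bounded solution operator, and close via a contraction argument, whereas the paper packages the same ingredients as an application of the inverse function theorem between the Banach space $\mathbb{X}$ and the Banach manifold $\mathcal{M}_{\mathbb{Y}}\subset\mathbb{Y}$ cut out by the nonlinear compatibility condition. You even note this equivalence at the end, and both arguments defer the same hard linear input (well-posedness of the constant-coefficient parabolic boundary value problem with boundary operators of orders $1$ and $3$), so the two proofs are interchangeable.
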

\begin{proof}
    Write
    \begin{align}
        \mathbb{X} &\vcentcolon= C^{\frac{4+\alpha}{4},4+\alpha}([0,T_0]\times \Sigma),\\
        \mathbb{Y} &\vcentcolon= C^{\frac{\alpha}{4},\alpha}([0,T_0]\times \Sigma) \times C^{4,\alpha}(\Sigma) \times C^{\frac{3+\alpha}{4},3+\alpha}([0,T_0]\times \partial \Sigma) \times C^{\frac{1+\alpha}{4},1+\alpha}([0,T_0]\times \partial \Sigma),\\
        \mathcal{U} &\vcentcolon= \{w\in\mathbb{X}\mid w(t)\in U_2 \text{ for all $0\leq t\leq T_0$}\}
    \end{align}
    with $U_2$ as in \Cref{prop:submersion}. Consider $\mathcal{B}\colon C^{4,\alpha}(\Sigma)\to C^{3,\alpha}(\partial \Sigma)\times C^{1,\alpha}(\partial \Sigma)$, 
    \begin{align}
        \mathcal B(w) \vcentcolon= \big(&\partial_{\vec\mu^{(1)}(\cdot,w(\cdot))}w -\vec\mu^{(2)}(\cdot,w(\cdot)) \Big|_{\partial\Sigma}, \\
    &\frac{1}{\langle \xi(\cdot,w(\cdot)),\nu_{f_w}\rangle}\Big[\frac{\partial H_{f_w}}{\partial \eta_{f_w}} - (\nabla^2 d^S)\circ f_w~(\nu_{f_w},\nu_{f_w})H_{f_w}\Big]\Big|_{\partial \Sigma}\big) \\
    &=\big(\mathbb B_1(w).w+F_1(w),\mathbb B_2(w).w+F_2(w)\big)
    \end{align}
    and let $\mathcal{M}_{\mathbb Y}\vcentcolon=\{(f,w_0,h_1,h_2)\in\mathbb Y\mid \mathcal{B}(w_0)=(h_1(0,\cdot),h_2(0,\cdot))\}$. 
    It is easy to verify that $(f,w_0,h_1,h_2)\mapsto \mathcal{B}(w_0)-(h_1(0,\cdot),h_2(0,\cdot))$ is a submersion at $(0,0,0,0)$, proving surjectivity of the linearization by using that $(h_1,h_2)\mapsto - (h_1(0,\cdot),h_2(0,\cdot))$ already is surjective.
    Therefore, by \Cref{rem:on-submanifolds} there exists $\varepsilon'>0$ such that $\mathcal{M}_{\mathbb Y}\cap B_{\varepsilon'}((0,0,0,0)) \subset\mathbb Y$ is a Banach (sub-)manifold with
    \begin{equation}\label{eq:lin-comp-cond}
        T_{(0,0,0,0)}(\mathcal M_{\mathbb Y}\cap B_{\varepsilon'}(0)) = \{(f,w_0,h_1,h_2) \in \mathbb Y\mid \mathcal{B}'(0).w_0-(h_1(0,\cdot),h_2(0,\cdot))=0\}. 
    \end{equation}
    Then consider the operator $\mathcal{T}\colon \mathcal{U} \to \mathbb{Y}$ with
    \begin{align}
        \mathcal{T}(w) \vcentcolon= \Big(&\partial_t w +  \frac{1}{\langle\xi(\cdot,w(\cdot)),\nu_{f_w}\rangle}\big(\Delta_{g_{f_w}}H_{f_w}+|A^0_{f_w}|^2H_{f_w}\big),w(0,\cdot ), t\mapsto \mathcal{B}(w(t,\cdot)) \Big).
    \end{align} 
    In particular, we have that $\mathcal{T}(\mathcal U)\subset \mathcal M_{\mathbb Y}$ and, without loss of generality making $\delta$ smaller, $\mathcal{T}(B_{\delta}(0))\subset \mathcal M_{\mathbb Y}\cap B_{\varepsilon'}((0,0,0,0))$, using that $\mathcal{T}(0)=0$ since $\bar{f}=f_w|_{w=0}$ is a Willmore immersion satisfying the free boundary conditions \eqref{intro-eq:fbc}, cf.\ \Cref{lem:choice-of-bdryconditions}. 
    We then have that
    \begin{equation}\label{eqcl:t-prime-0-invertible}
        \mathcal{T}\in C^1(\mathcal{U},\mathbb{Y}) \quad\text{and}\quad \mathcal{T}'(0)\colon\mathbb{X}\to T_{(0,0,0,0)}(\mathcal M_{\mathbb Y}\cap B_{\varepsilon'}(0)) \text{ is invertible}.
    \end{equation}
    A detailed justification for \eqref{eqcl:t-prime-0-invertible} goes beyond the scope of this article. We only point out that, using \eqref{eq:def-F0} and \eqref{eq:def-F1-F2}, the principal part of $\mathcal T'(0)$ is determined by the linear operators $\mathbb A(0)$, $\mathbb B_1(0)$ and $\mathbb B_2(0)$. Thus, using \eqref{eq:local-coords-mathbb-A} and \eqref{eq:loc-coords-mathbb-Bi}, statement \eqref{eqcl:t-prime-0-invertible} can be proved using  linear parabolic theory. Hereby, the linear compatibility conditions for the data are satisfied due to \eqref{eq:lin-comp-cond}. 

    By \eqref{eqcl:t-prime-0-invertible}, the inverse function \Cref{thm:inv-fct} applies and thus, there are neighborhoods $\mathcal{U}_0\subset \mathcal{U}$ of $0$ and $\mathbb{Y}_0\subset B_{\varepsilon'}((0,0,0,0))\subset \mathbb{Y}$ of $0$ such that $\mathcal{T}$ restricts to a diffeomorphism $\mathcal{U}_0\to\mathbb{Y}_0\cap\mathcal M_{\mathbb Y}$. Without loss of generality, $\delta$ is sufficiently small such that $B_{\delta}(0)\subset \mathcal{U}_0$. Then choose $0<\varepsilon<\varepsilon'$ such that
    \begin{equation}
        \mathcal M_{\mathbb Y}\cap (\{0\}\times B_{\varepsilon}(0) \times \{0\}\times \{0\}) \subset \mathcal{T}(B_{\delta}(0)).
    \end{equation}
    The claim follows setting $w=\mathcal{T}^{-1}(0,w_0,0,0)$, see \eqref{eq:schauder-2}.
\end{proof}

\subsection{Stability of local minimizers under the flow}

In this subsection, we prove the following stability result.

\begin{theorem}\label{lem:near-crit-conv}
    Let $\bar{f}\colon \Sigma\to\R^3$ be a Willmore immersion satisfying \eqref{intro-eq:fbc}. Let $\delta>0$. Then there exists $\varepsilon=\varepsilon(\bar{f},\delta)>0$ with the following property. If $f\colon [0,T)\times \Sigma\to\R^3$ is a free boundary Willmore flow with $f=f_w$ where $w\colon[0,T)\times \Sigma\to\R$ is a maximal solution of \eqref{eq:schauder-2}, and if
		\begin{enumerate}[(i)]
			\item $\Vert {f_0-\bar{f}}\Vert_{C^{4,\alpha}(\Sigma)}<\varepsilon$ for some $\alpha>0$;
			\item $\W(f(t))\geq\W(\bar{f})$ whenever $\Vert f(t) - \bar{f}\Vert_{C^4(\Sigma)}\leq \delta$;
		\end{enumerate}
		then $T=\infty$. Moreover, as $t\to\infty$, $f(t)$ converges smoothly 
        to a Willmore immersion $f_\infty$, satisfying $\W(f_\infty)=\W(\bar{f})$ and $\Vert {f_{\infty}-\bar{f}}\Vert_{C^4(\Sigma)}\leq\delta$.        
\end{theorem}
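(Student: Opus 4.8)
The plan is to run the standard {\L}ojasiewicz--Simon trapping argument in the generalized Gaussian coordinates, combining \Cref{cor:loja-gauss-coord} with the short-time estimate \Cref{prop:ste-at-0} and parabolic smoothing. Since \eqref{intro-eq:fbc} is preserved under passing to the Gaussian description, $\|f_{w_0}-\bar f\|_{C^{4,\alpha}(\Sigma)}$ is comparable to $\|w_0\|_{C^{4,\alpha}(\Sigma)}$, and $f=f_w\leftrightarrow w$ is a correspondence between flows near $\bar f$ and solutions of \eqref{eq:schauder-2} by \Cref{lem:fbwf-in-gauss-coord}, it suffices to trap $w$.

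First I would fix the good neighborhood. Choose $\rho\in(0,\sigma']$, with $\sigma'$ from \Cref{cor:loja-gauss-coord}, so small that $\{\|w\|_{C^4(\Sigma)}\le\rho\}\subset U_2$ (with $U_2$, $\M$ as in \Cref{prop:submersion}) and that $\|w\|_{C^4}\le\rho$ forces $\|f_w-\bar f\|_{C^4(\Sigma)}\le\delta$; $\rho$ will be shrunk once more below. Let $[0,T^*)\subset[0,T)$ be maximal with $\|w(t)\|_{C^4(\Sigma)}\le\rho$; by \Cref{prop:ste-at-0} (with $T_0=1$) one has $T^*>0$ and $\|w\|_{C^{(4+\alpha)/4,4+\alpha}([0,1]\times\Sigma)}$ arbitrarily small once $\varepsilon$ is small. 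On $[0,T^*)$ we have $w(t)\in\M$ (by the boundary conditions of \eqref{eq:schauder-2} and \Cref{lem:choice-of-bdryconditions}), hence $\partial_t w(t)\in T_{w(t)}\M$; combined with $\partial_t w=-\delta\E(w)$ and \eqref{eq:deltaE} this gives the dissipation identity
\begin{align*}
\frac{d}{dt}\W(f_w(t)) &= \E'(w(t)).\partial_t w(t) = -\tfrac12\int_\Sigma(\delta\E(w(t)))^2\langle\xi(\cdot,w(t)),\nu_{f_w}\rangle^2\,\dd\mu_{f_w} \\
&\le -c_0\|\delta\E(w(t))\|_{L^2(\dd\mu_{\bar f})}^2
\end{align*}
for some $c_0>0$, using \eqref{eq:estimate-xi0fw-in-nu-fw} and the uniform equivalence of the metrics on the good neighborhood. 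Thus $t\mapsto\W(f_w(t))$ is non-increasing on $[0,T^*)$, and since $\|f_w(t)-\bar f\|_{C^4}\le\delta$ there, hypothesis (ii) gives $\W(f_w(t))\ge\W(\bar f)$; together with (i) also $0\le\W(f_{w_0})-\W(\bar f)\le C\varepsilon$. If $\W(f_w(t_0))=\W(\bar f)$ for some $t_0<T^*$, then $\W(f_w(\cdot))$ is constant on $[t_0,T^*)$, so $\delta\E(w)\equiv0$ and $\partial_t w\equiv0$ there and the flow is eventually stationary; otherwise $\W(f_w(t))>\W(\bar f)$ throughout, and combining the dissipation identity with \Cref{cor:loja-gauss-coord} (using that $\|\nabla\W(f_w)\|_{L^2(\dd\mu_{\bar f})}$ and $\|\delta\E(w)\|_{L^2(\dd\mu_{\bar f})}$ are comparable by \eqref{eq:gradientWillmore} and \eqref{eq:estimate-xi0fw-in-nu-fw}) yields, with $\theta$ from \Cref{cor:loja-gauss-coord},
\begin{align*}
-\frac{d}{dt}\big(\W(f_w(t))-\W(\bar f)\big)^\theta \ge c_1\|\delta\E(w(t))\|_{L^2(\dd\mu_{\bar f})} = c_1\|\partial_t w(t)\|_{L^2(\dd\mu_{\bar f})}
\end{align*}
for some $c_1>0$. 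Integrating gives the key bound $\int_0^{T^*}\|\partial_t w(s)\|_{L^2}\,ds\le c_1^{-1}(\W(f_{w_0})-\W(\bar f))^\theta\le C\varepsilon^\theta$.

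Then comes the bootstrap. On $[0,T^*)$ the quasilinear system \eqref{eq:schauder-2} has uniformly controlled coefficients (as $\|w(t)\|_{C^4}\le\rho$) and homogeneous right-hand side and boundary data, so parabolic Schauder estimates on unit time intervals, together with \Cref{prop:ste-at-0} near $t=0$, give $\sup_{[0,T^*)}\|w(t)\|_{C^{4,\alpha}(\Sigma)}\le C(\rho,\bar f)$. Interpolating between this and the $L^1$-in-time, $L^2$-in-space bound on $\partial_t w$ yields
\begin{align*}
\|w(t)-w(s)\|_{C^4(\Sigma)} \le C\|w(t)-w(s)\|_{C^{4,\alpha}}^{1-\beta}\|w(t)-w(s)\|_{L^2}^\beta \le C\varepsilon^{\theta\beta}
\end{align*}
uniformly in $s,t\in[0,T^*)$ for some $\beta\in(0,1)$; hence, choosing $\varepsilon$ (and $\rho$) small, $\sup_{[0,T^*)}\|w(t)\|_{C^4}\le\rho/2$. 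If $T^*<T$ this contradicts maximality of $T^*$ by continuity, so $T^*=T$; if moreover $T<\infty$, the uniform bounds plus $L^2$-Cauchyness of $w(t)$ as $t\to T^-$ produce a $C^{4,\alpha}$-limit $w(T)$ with $\|w(T)\|_{C^4}\le\rho/2$ and $f_{w(T)}$ satisfying \eqref{intro-eq:fbc}, so the flow continues past $T$ by short-time existence, a contradiction; thus $T=\infty$. Finally, $\int_0^\infty\|\partial_t w\|_{L^2}<\infty$ gives $w(t)\to w_\infty$ in $L^2$, which parabolic smoothing upgrades to smooth convergence, and since $\int_0^\infty\|\delta\E(w)\|_{L^2}^2\,dt<\infty$ one also gets $\delta\E(w(t))\to0$, so $w_\infty\in\M$ is stationary; hence $f_\infty:=f_{w_\infty}$ is a Willmore immersion satisfying \eqref{intro-eq:fbc} with $\|f_\infty-\bar f\|_{C^4}\le\delta$, $f_w(t)\to f_\infty$ smoothly, and \Cref{cor:loja-gauss-coord} at $w_\infty$ gives $(\W(f_\infty)-\W(\bar f))^{1-\theta}\le C\|\delta\E(w_\infty)\|_{L^2}=0$, i.e.\ $\W(f_\infty)=\W(\bar f)$.

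The main obstacle is this bootstrap: promoting the merely weak $L^1_tL^2_x$ control of $\partial_t w$ furnished by the {\L}ojasiewicz inequality to genuine $C^4$-smallness — which is exactly what keeps the solution trapped in the good neighborhood — together with the attendant continuation argument, both of which rest on uniform parabolic Schauder estimates for the fourth-order quasilinear free boundary problem \eqref{eq:schauder-2} with its compatibility conditions held uniformly along the flow.
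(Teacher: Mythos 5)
Your proposal follows essentially the same trapping argument as the paper's proof: a maximal interval of $C^4$-closeness to $\bar f$, uniform parabolic Schauder bounds on that interval via \Cref{lem:schauder} and \Cref{prop:ste-at-0}, integrating the differentiated {\L}ojasiewicz inequality to bound $\int\|\partial_t w\|_{L^2}\,\mathrm{d}t$, and interpolation against the Schauder bound to upgrade to $C^4$-smallness and conclude by maximality. The only cosmetic difference is that you carry out the {\L}ojasiewicz step entirely in the Gaussian coordinate $w$ (using $\partial_t w=-\delta\E(w)$, \eqref{eq:deltaE}, and \Cref{cor:loja-gauss-coord}) rather than via the intrinsic statement \Cref{intro-thm:main-result-fbc} for the immersion $f$, and you spell out the continuation and convergence steps that the paper delegates to a reference.
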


A key ingredient are the following Schauder-type estimates.
\begin{lemma}
\label{lem:schauder}
    Let $T_0>0$, $T\in [T_0,\infty)$ and   let
    $w\in C^{\frac{4+\alpha}{4},4+\alpha}([0,T']\times \Sigma)$ for all $T'<T$ be a solution of \eqref{eq:schauder-2}. There exist $\alpha'\in(0,\alpha)$ and $\varepsilon_1>0$ with the property that, if 
    \begin{equation}\label{eq:schauder-1}
        \|f_{w(t)}-\bar{f}\|_{C^4(\Sigma)} \leq \varepsilon_1\quad\text{for all $0\leq t<T$}
    \end{equation}
    and letting $M>0$ such that $\|w_0\|_{C^{4,\alpha}(\Sigma)}\leq M$, then
    \begin{equation}
        \|w\|_{C^{\frac{4+\alpha'}{4},4+\alpha'}([0,T]\times \Sigma)} \leq C(T_0,\bar{f},\alpha,M).
    \end{equation}
\end{lemma}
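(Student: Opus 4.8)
The plan is to read \eqref{eq:schauder-2}, in the local charts of a good atlas (\Cref{rem:good-atlas}) and via the decompositions \eqref{eq:local-coords-mathbb-A}, \eqref{eq:loc-coords-mathbb-Bi}, \eqref{eq:def-F0}, \eqref{eq:def-F1-F2}, as a \emph{linear} fourth-order parabolic boundary value problem whose principal and lower-order coefficients, inhomogeneity and boundary data are all controlled \emph{a priori} by the solution $w$ itself, and then to invoke linear parabolic Schauder theory (e.g.\ in the form of \cite{metsch2023}; cf.\ \Cref{rem:good-atlas}). The first step is to upgrade the geometric smallness \eqref{eq:schauder-1} to $\sup_{0\le t<T}\|w(t)\|_{C^4(\Sigma)}\le\Lambda_1$ with $\Lambda_1=C(\bar f)\varepsilon_1$: since $f_{w(t)}=\Phi(\cdot,w(t,\cdot))$ and each $\Phi|_{\Sigma_j\times(-\bar r,\bar r)}$ is a diffeomorphism with uniformly bounded derivatives (\Cref{prop:ex-gauss-coord}, \Cref{lem:def-fw}), one solves recursively for the derivatives of $w(t)$ of orders $1,\dots,4$ in terms of those of $f_{w(t)}$ once $\|w(t)\|_{C^0}$ is small. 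Consequently, after shrinking $\varepsilon_1$, every coefficient in \eqref{eq:local-coords-mathbb-A} and \eqref{eq:loc-coords-mathbb-Bi}, being a fixed smooth function of $g_{f_{w(t)}}$ and of its spatial derivatives up to second order (hence of $w(t)$ and its spatial derivatives up to \emph{third} order), is uniformly bounded in $t$ in $C^0$, and the principal ones stay within $C(\bar f)\varepsilon_1$ of those of the reference operators $\Delta_{g_{\bar f}}^2$, $\partial_{\eta_{\bar f}}$, $\partial_{\eta_{\bar f}}\Delta_{g_{\bar f}}$; thus the linearised problem is uniformly parabolic and its lateral boundary conditions satisfy the complementing (Lopatinskii--Shapiro) condition uniformly. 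For the reference operators this is classical and is the parabolic counterpart of \Cref{thm:neumann-bilaplace}, and it is stable under small perturbations. The inhomogeneities $F_0,F_1,F_2$ are likewise uniformly bounded.

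The key point, which breaks the apparent circularity (the Schauder constant involves exactly the norm $\|w\|_{C^{\frac{4+\alpha}{4},4+\alpha}}$ one wants to bound), is that \eqref{eq:schauder-1} controls \emph{four} spatial derivatives of $w$ whereas the coefficients and inhomogeneities depend only on spatial derivatives of order $\le 3$. Inserting $\sup_t\|w(t)\|_{C^4(\Sigma)}\le\Lambda_1$ into the evolution equation in \eqref{eq:schauder-2} yields $\|\partial_t w\|_{C^0([0,T)\times\Sigma)}\le C(\bar f)$, so $w$ is uniformly Lipschitz in $t$ with values in $C^0(\Sigma)$. Interpolating this with $w\in C^0([0,T);C^4(\Sigma))$ in the parabolic scaling $t\sim x^4$ shows that $D_x^j w$, $0\le j\le 3$, is uniformly $\tfrac14(4-j)$-Hölder in time, and the same holds for the traces on $\partial\Sigma$ after one further interpolation. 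Composing with the smooth structural functions one obtains, for \emph{every} fixed $\alpha'\in(0,1)$ and with constants depending only on $\bar f$: the coefficients of $\mathbb{A}(w),\mathbb{B}_1(w),\mathbb{B}_2(w)$ are uniformly bounded in the parabolic Hölder space $C^{\frac{\alpha'}{4},\alpha'}$ over $[0,T)\times\Sigma$, resp.\ $[0,T)\times\partial\Sigma$; $F_0$ is uniformly bounded in $C^{\frac{\alpha'}{4},\alpha'}([0,T)\times\Sigma)$; and the boundary data $-F_1(\cdot,w)$ and $-F_2(\cdot,w,\bar\nabla w,\bar\nabla^2 w)$ are uniformly bounded in $C^{\frac{3+\alpha'}{4},3+\alpha'}([0,T)\times\partial\Sigma)$ and $C^{\frac{1+\alpha'}{4},1+\alpha'}([0,T)\times\partial\Sigma)$ respectively. (The gap $4>3$ between the spatial order controlled and the spatial order needed is precisely what allows these counts to absorb the loss from the $t\sim x^4$ scaling.)

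Now fix any $\alpha'\in(0,\alpha)$. To keep the final constant uniform as $T\to\infty$ one runs linear parabolic Schauder estimates on time windows of length comparable to $T_0$. On $[0,\min\{T,2T_0\}]\times\Sigma$ one uses the full estimate with initial datum $w_0$: here $\|w_0\|_{C^{4+\alpha'}(\Sigma)}\le\|w_0\|_{C^{4,\alpha}(\Sigma)}\le M$ since $\alpha'<\alpha$, the inhomogeneity and boundary-data bounds come from the previous step, and the compatibility conditions at $\{0\}\times\partial\Sigma$ hold — the zeroth-order one because $f_{w_0}$ satisfies \eqref{intro-eq:fbc} (cf.\ \Cref{lem:fbwf-in-gauss-coord}), the first-order one automatically because $w\in C^{\frac{4+\alpha}{4},4+\alpha}([0,T']\times\Sigma)$ is regular up to $t=0$ for $T'<T$. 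On each subsequent window $[\tau,\tau+T_0]\times\Sigma$ with $\tau\ge T_0$ (or $[\tau,T]\times\Sigma$) one uses interior-in-time estimates reaching back to $[\tau-T_0,\tau+T_0]$ and up to the lateral boundary $\partial\Sigma$; these need no initial datum and have constants independent of $\tau$, because the principal part is a fixed autonomous small perturbation of $\partial_t+\Delta_{g_{\bar f}}^2$. Patching the finitely-overlapping windows (the parabolic Hölder norm over $[0,T]\times\Sigma$ being comparable to the supremum of the window norms, in the spirit of the localization recalled in \Cref{rem:good-atlas}) gives $\|w\|_{C^{\frac{4+\alpha'}{4},4+\alpha'}([0,T]\times\Sigma)}\le C(T_0,\bar f,\alpha,M)$, as desired.

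I expect the most delicate parts to be, first, the explicit verification that the quasilinear lateral boundary operators $\mathbb{B}_1,\mathbb{B}_2$ of \eqref{eq:loc-coords-mathbb-Bi}, together with their sub-principal terms, satisfy the parabolic complementing condition uniformly — which, after freezing coefficients, reduces to the reference boundary system $(\partial_{\eta_{\bar f}},\partial_{\eta_{\bar f}}\Delta_{g_{\bar f}})$ for $\Delta_{g_{\bar f}}^2$, consistently with \Cref{thm:neumann-bilaplace} — and, second, the bookkeeping needed to make the window constants genuinely uniform in $T$ and to glue the local parabolic-Hölder estimates (including the exact exponent count for the boundary coefficients and data sketched above). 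The conceptual core, by contrast, is the observation in the second paragraph: because \eqref{eq:schauder-1} bounds one spatial derivative more than the nonlinearity requires, the equation turns this into a uniform Lipschitz-in-time bound, and interpolation then supplies exactly the parabolic-Hölder control of the coefficients and data that renders the linear Schauder estimate self-contained.
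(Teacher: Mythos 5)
Your proposal is essentially the same as the paper's proof: the key observation — that \eqref{eq:schauder-1} controls one spatial derivative more than the coefficients and inhomogeneities need, so that the evolution equation gives a uniform Lipschitz-in-time bound and interpolation (the paper cites \cite[Propositions~1.1.3, 1.1.4]{lunardi1995} for the embedding $\mathrm{Lip}(0,T;C^0)\cap\mathrm{Bd}(0,T;C^4)\hookrightarrow C^\vartheta(0,T;C^{4(1-\vartheta)})$ with $T$-independent constant) then furnishes parabolic Hölder control of all data — is exactly the one the paper uses, and you then invoke linear parabolic Schauder estimates as the paper does via \cite[Theorem~4.1.6]{metsch2023}. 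The only difference is procedural: you make the time-uniformity of the Schauder constant explicit by windowing into intervals of length $\sim T_0$, whereas the paper simply cites a Schauder theorem that already delivers a constant $C(T_0,\alpha',\bar f)$ (with a $\sup_t\|w(t)\|_{L^2}$ term on the right that is harmlessly absorbed); both are standard and equivalent ways of closing the argument.
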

\begin{proof}
    Consider the linear differential operator 
    \begin{equation}
        \mathcal P(u)\vcentcolon=\partial_tu +\mathbb A(w(t,\cdot)).u(t,\cdot)
    \end{equation}
    for functions $u=u(t,x)\colon [0,T]\times \Sigma\to (-\bar r,\bar r)$,
    and the linear boundary operators
    \begin{align}
        \mathcal{B}_1(u)&\vcentcolon=\mathbb B_1(w(t,\cdot)).u(t,\cdot),\\
        \mathcal{B}_2(u)&\vcentcolon=\mathbb B_2(w(t,\cdot)).u(t,\cdot).
    \end{align}
    Using \eqref{eq:local-coords-mathbb-A} and \eqref{eq:loc-coords-mathbb-Bi}, the boundary conditions $\mathcal{B}$ are $\mathcal{P}$-compatible in the sense of \cite[Definition~4.1.4]{metsch2023}.
    
    Choosing $\varepsilon_1>0$ sufficiently small and using that $(g^{ij}_{\bar{f}})_{i,j=1}^2$ is positive definite, one finds that  $\mathcal{P}$ is strictly elliptic, uniformly in $(t,x)$. That is, for a good atlas $\mathcal A$ as in \Cref{rem:good-atlas}, there exists $\delta>0$ such that, in the local coordinates of all charts $(U_i,\varphi_i)\in\mathcal A$, 
    \begin{equation}
        -g^{ij}_{f_{w(t)}}(x)g^{kl}_{f_{w(t)}}(x)\varsigma^{(i)}\varsigma^{(j)}\varsigma^{(k)}\varsigma^{(l)} \leq -\delta |\varsigma|^4\quad\text{for all $0\leq t<T$ and $x\in U_i$}.
    \end{equation}

    Successively applying $\eqref{eq:schauder-1}$ and \eqref{eq:schauder-2}, 
    \begin{equation}\label{eq:schauder-4}
        \sup_{0\leq t<T} \|w\|_{C^4(\Sigma)} \leq C(\bar{f}) \quad\text{and}\quad \sup_{0\leq t<T} \|\partial_tw\|_{C^0(\Sigma)} \leq C(\bar{f}).
    \end{equation}
    Since
    \begin{equation}\label{eq:schauder-3}
        \mathrm{Lip}(0,T;C^0(\Sigma))\cap \mathrm{Bd}(0,T;C^4(\Sigma))\hookrightarrow C^{\vartheta}(0,T;C^{4(1-\vartheta)}(\Sigma))
    \end{equation}
    for all $\vartheta\in(0,1)$ by \cite[Propositions~1.1.3 and 1.1.4]{lunardi1995} where the embedding constant does not depend on $T$ (see \cite[Equation~(1.1.3)]{lunardi1995}), one concludes for the coefficients of $\mathcal{P}$ and $\mathcal{B}_2$ the following, cf.\ \eqref{eq:local-coords-mathbb-A}, \eqref{eq:loc-coords-mathbb-Bi}. There exists $\alpha'\in(0,1)$ such that, in the local coordinates of any chart $(U_n,\varphi_n)\in\mathcal A$, any of the norms    
    \begin{align}
        &\|g^{ij}_{f_{w}}g^{kl}_{f_{w}}\|_{C^{\frac{\alpha'}{4},\alpha'}([0,T)\times U_n)},\qquad 
        \|b_{\alpha,0}\big(\bar \Gamma,\partial \bar \Gamma,\partial^2 \bar \Gamma,g_{f_{w}},\partial g_{f_{w}},\partial^2 g_{f_{w}}\big)\|_{C^{\frac{\alpha'}{4},\alpha'}([0,T)\times U_n)}       
    \end{align}
    can be bounded by a constant $C=C(\bar{f},\alpha',\mathcal A)$. Similarly, in any chart $(U_n,\varphi_n)\in\mathcal A$ around 
    a point in $\partial\Sigma$, any of the norms
    \begin{align}
        &\|(g^{22}_{f_w})^{-\frac12}g^{2i}_{f_w}g^{kl}_{f_w}\|_{C^{\frac{1+\alpha'}{4},1+\alpha'}([0,T)\times (U_n\cap\partial\Sigma))},\\
        &\|b_{\alpha,2}\big(\bar\Gamma,\partial \bar\Gamma,g_{f_{w}},\partial g_{f_{w}}\big)\|_{C^{\frac{1+\alpha'}{4},1+\alpha'}([0,T)\times (U_n\cap\partial\Sigma))}
    \end{align}
    can be bounded by a constant $C=C(\bar{f},\alpha',\mathcal A)$. Regarding $\mathcal B_1=\partial_{\vec\mu^{(1)}(\cdot,w(\cdot))}$, we have
    \begin{equation}
        \|\vec\mu^{(1)}(\cdot,w)\|_{C^{\frac{3+\alpha'}{4},3+\alpha'}([0,T)\times \partial\Sigma)} \leq C(\bar f,\alpha',\mathcal A).
    \end{equation}  
    By the parabolic Schauder estimates in \cite[Theorem~4.1.6]{metsch2023} for the problem determined by $\mathcal{P}$ and $\mathcal{B}$, without loss of generality taking $0<\alpha'<\alpha$, for a constant $C=C(T_0,\alpha',\bar{f})$
    \begin{align}
        \|w\|_{C^{\frac{4+\alpha'}{4},4+\alpha'}([0,T]\times\Sigma)}  &\leq C\Big(\|F_0(\cdot,w,\bar\nabla w,\bar\nabla^2 w,\bar\nabla^3w)\|_{C^{\frac{\alpha'}{4},\alpha'}([0,T]\times\Sigma)}+\|w_0\|_{C^{4,\alpha'}(\Sigma)}\\
        &\qquad + \|F_1(\cdot,w)\|_{C^{\frac{3+\alpha'}{4},3+\alpha'}([0,T]\times\partial\Sigma)}  \\
        &\qquad + \| F_2(\cdot,w,\bar\nabla w,\bar\nabla^2 w)\|_{C^{\frac{1+\alpha'}{4},1+\alpha'}([0,T]\times\partial\Sigma)}\\
        &\qquad+\sup_{0\leq t\leq T}\|w(t,\cdot)\|_{L^2(\Sigma)}\Big),
    \end{align}
    using \eqref{eq:def-F0} and \eqref{eq:def-F1-F2}. Note that the right-hand side is bounded by a constant $C=C(T_0,\bar{f},\alpha',\mathcal A)$, using \eqref{eq:schauder-4} and \eqref{eq:schauder-3}. This proves the claim.
\end{proof}

\begin{proof}[Proof of \Cref{lem:near-crit-conv}]
    Writing $w_0=w(0,\cdot)$, assumption (i) yields 
    \begin{equation}
        \|w_0\|_{C^{4,\alpha}(\Sigma)}\leq C\varepsilon
    \end{equation}
    for some constant $C$ only depending on $\bar f$ and $S$. Fix $T_0>0$.
    With $\varepsilon_1$ as in \Cref{lem:schauder}, let $0<\tilde\sigma<\min\{\delta,\varepsilon_1\}$ be sufficiently small such that $\|h-\bar{f}\|_{C^4(\Sigma)}<\tilde\sigma$ implies $\|h-\bar{f}\|_{W^{4,2}(\Sigma)}<\sigma(\bar{f})$ with $\sigma(\bar{f})$ as in \Cref{intro-thm:main-result-fbc}.
    By \Cref{prop:ste-at-0}, choosing $\varepsilon=\varepsilon(\tilde\sigma)$ sufficiently small, 
    we have $T>T_0$ and there exists 
    $T_{\max}\in [T_0,T]$ such that
    \begin{equation}\label{eq:choice-tilde-sigma}
        \|{f}(t)-\bar{f}\|_{C^{4}(\Sigma)} \leq \tilde\sigma < \delta
    \end{equation}
    on a (with respect to \eqref{eq:choice-tilde-sigma}) maximal time interval $t\in [0,T_{\max})$. 
    Using \Cref{lem:schauder} and $\tilde\sigma<\varepsilon_1$, there are constants $C=C(T_0,\bar{f})>0$, $\alpha'\in (0,\alpha)$ with
    \begin{equation}\label{eq:schauder-bound}
        \sup_{0\leq t<T_{\max}} \|{f}(t)-\bar{f}\|_{C^{4,\alpha'}(\Sigma)} \leq C(T_0,\bar{f},\alpha).
    \end{equation}
    Without loss of generality, we may assume that $\W(\tilde{f}(t))>\W(\bar{f})$ for all $0\leq t<T_{\max}$. Let $\theta$ and $C_L$ be as in the \L ojasiewicz-Simon gradient inequality \Cref{intro-thm:main-result-fbc}. Then
    \begin{align}
        -\partial_t \big(\W({f}(t))-\W(\bar{f})\big)^{\theta} &= -2\theta \big(\W({f}(t))-\W(\bar{f})\big)^{\theta-1} \int_{\Sigma} \langle\nabla\W({f}),\partial_t{f}\rangle\dd\mu_{\tilde{f}}\\
        &= 2\theta \big(\W({f}(t))-\W(\bar{f})\big)^{\theta-1} \|\nabla\W({f})\|_{L^2(\dd\mu_{\tilde{f}})} \|\partial_t^{\bot}{f}\|_{L^2(\dd\mu_{{f}})}\\
        &\geq \frac{\theta}{C_L} \|\partial_t^{\bot}{f}\|_{L^2(\dd\mu_{{f}})}.
    \end{align}
    Since $\partial_t^{\bot}{f}=\partial_t^{\bot}f_w=\langle\xi(\cdot,w(\cdot)),\nu_{f_w}\rangle\partial_tw\cdot \nu_{f_w}$ and $\partial_t f = \xi(\cdot,w(\cdot))\  \partial_tw$, and using \eqref{eq:estimate-xi0fw-in-nu-fw}, we have for all $0\leq t<T_{\max}$
    \begin{equation}
        \|\partial_t{f}\|_{L^2(\dd\mu_{ f})} \leq C \|\partial_t w\|_{L^2(\dd\mu_{ f})} \leq -C\partial_t\big(\W({f}(t))-\W(\bar {f})\big)^{\theta}
    \end{equation}
    for some constant $C=C(\bar f)>0$ changing from term to term. Since $g_{ f}$ is uniformly equivalent to $g_{\bar f}$ on $[0,T_{\max})$ by \eqref{eq:choice-tilde-sigma}, and since $\mu_{\bar f}$ is equivalent to the Riemannian measure $\mu_{g_0}$ on $\Sigma$ induced by the reference metric $g_0$ (see \Cref{rem:dpartialsigma}),  
    \begin{equation}\label{eq:loja_along_flow}
        \|\partial_t{f}\|_{L^2(\Sigma)} \leq -C\partial_t\big(\W({f}(t))-\W(\bar{f})\big)^{\theta}
    \end{equation}
    for some constant $C=C(\bar f)>0$. Integrating in time yields for every $0\leq t<T_{\max}$
    \begin{equation}\label{eq:appl-loja}
        \|{f}(t)-\bar{f}\|_{L^2(\Sigma)} \leq \|f_0-\bar{f}\|_{L^2(\Sigma)} + C\big(\W(f_0)-\W(\bar{f})\big)^{\theta} \leq C\|f_0-\bar{f}\|_{C^2(\Sigma)}^{\theta}.
    \end{equation}
    For some $\beta\in (0,1)$ suitably small and depending only on $\alpha'$, by real interpolation, for a constant $C$ changing from line to line depending only on $\alpha$, $T_0$ and $\bar{f}$,
    \begin{align}
        \|{f}(t)-\bar{f}\|_{C^4(\Sigma)}&\leq C \|{f}(t)-\bar{f}\|_{C^{4,\alpha'}(\Sigma)}^{1-\beta}\|{f}(t)-\bar{f}\|_{L^2(\Sigma)}^{\beta}\\
        &\leq C \|f_0-\bar{f}\|_{C^2(\Sigma)}^{\beta\theta} \leq C\varepsilon^{\beta\theta}, \quad\text{for every $0\leq t<T_{\max}$},\label{eq:loja_along_flow_C4}
    \end{align}
    using \eqref{eq:schauder-bound} and \eqref{eq:appl-loja}. Making $\varepsilon=\varepsilon(\alpha,\beta,T_0,\bar{f},\tilde\sigma)$ even smaller such that also $C\varepsilon^{\beta\theta}<\frac12\tilde\sigma$, the maximality of $T_{\max}$ with respect to \eqref{eq:choice-tilde-sigma} yields $T_{\max}=\infty$.
    Continuing as in \cite[pp.~361 -- 362]{chillfasangovaschaetzle2009} yields the claim with arguments similar to those above.
\end{proof}

With \Cref{lem:near-crit-conv} at hand, we can finally turn to the proofs of \Cref{intro-thm:FBWF} and \Cref{cor:quanti_stability}.

\begin{proof}[Proof of \Cref{intro-thm:FBWF}]
Since $\bar f$ is a local minimizer of $\mathcal{W}$ among immersions with \eqref{intro-eq:fbc}, it is a free boundary Willmore immersion. Let $\bar \varepsilon>0$ to be chosen and fix a smooth immersion $f_0$ with \eqref{intro-eq:fbc}. Let $\varepsilon>0$ be as in \Cref{prop:ste-at-0} for $\delta=1, T_0=1$. \Cref{prop:reparam-as-fw} and \Cref{rem:reparam_fw_C4a} yield that for $\bar \varepsilon>0$ sufficiently small, $f_0=f_{w_0}\circ \Psi$ for some $w_0\in C^{4,\alpha}(\Sigma)$ with $\Vert w_0\Vert_{C^{4,\alpha}}<\varepsilon$
and a $C^{4,\alpha}$-diffeomorphism $\Psi$. Now, \Cref{prop:ste-at-0} and \Cref{lem:fbwf-in-gauss-coord} yield the existence of a solution $f(t)$, $0\leq t\leq 1$, to the free boundary Willmore flow starting from $f_{w_0}$. Reducing $\bar \varepsilon>0$, we may further assume that the assumptions of \Cref{lem:near-crit-conv} are satisfied  (with $\delta>0$ such that $\bar f$ is a local minimizer). It follows from \Cref{lem:near-crit-conv} that $f(t)$ exists globally and converges smoothly as $t\to\infty$ to some free boundary Willmore immersion $f_\infty$ with $\mathcal{W}(f_\infty)=\mathcal{W}(\bar f)$.
\end{proof}

\begin{proof}[Proof of \Cref{cor:quanti_stability}]
The assumptions allow us to apply \Cref{lem:near-crit-conv}, so the free boundary Willmore flow $f(t)$ starting at $f_{w_0}=f_0\circ\Psi$ as in the proof of \Cref{intro-thm:FBWF} exists globally and \eqref{eq:loja_along_flow} is valid for all $t\in [0,\infty)$. With $C=C(\bar f)>0$ and $\theta=\theta(\bar f)\in (0,\frac12]$ as in \eqref{eq:loja_along_flow}, we thus conclude that
\begin{align}\label{eq:quanti_stab_1}
    \Vert f(t)-f_{w_0}\Vert_{L^2(\Sigma)} \leq C \big(\mathcal{W}(f_0)-\mathcal{W}(\bar f)\big)^\theta.
\end{align}
Now, arguing as in \eqref{eq:loja_along_flow_C4} with $T_0=1$ and $\bar f$ replaced by $f_{w_0}$, we find
\begin{align}\label{eq:quanti_stab_2}
\Vert f(t)-f_{w_0}\Vert_{C^4(\Sigma)}
&\leq C \Vert f(t)-f_{w_0}\Vert_{L^2(\Sigma)}^\beta,
\end{align}
for some $\beta\in (0,1)$ and $C=C(\alpha, \bar f)$ determined by \eqref{eq:schauder-bound}. Combining \eqref{eq:quanti_stab_1} and \eqref{eq:quanti_stab_2} and sending $t\to\infty$ yields the statement.
\end{proof}

\section*{Acknowledgments}
This research was funded in whole, or in part, by the Austrian Science Fund (FWF), grant numbers \href{https://doi.org/10.55776/ESP557}{10.55776/ESP557} and \href{https://doi.org/10.55776/PAT8894924}{10.55776/PAT8894924}. Part of this work was conducted while the first two authors were visiting the \emph{Erwin Schrödinger International Institute for Mathematics and Physics} within the thematic program \emph{Free Boundary Problems}.


\appendix

\section{Banach manifolds}\label{sec:banach_manifolds}

For abstract Banach manifolds, we follow \cite{lang1972,abrahammarsdenratiu1988}.

\begin{definition}
    Let $X$ be a set. An atlas of class $C^p$ on $X$ where $p\in\N\cup\{\omega\}$ (here $C^\omega$ denotes \emph{analytic}) is a collection of pairs $(U_i,\varphi_i)$ satisfying
    \begin{enumerate}[(a)]
        \item $U_i\subset X$ and $\bigcup_i U_i=X$;
        \item for each $i,j$, $\varphi_i(U_i\cap U_j)$ is an open subset of a Banach space $E_i$ and $\varphi_i\colon U_i\to\varphi_i(U_i)$ is a bijection; 
        \item the map $\varphi_j\circ\varphi_i^{-1}\colon\varphi_i(U_i\cap U_j)\to\varphi_j(U_i\cap U_j)$ is a $C^p$-diffeomorphism for each pair of indices $i,j$.
    \end{enumerate}
    If such a $C^p$-atlas for $X$ exists, $X$ is equipped with the (unique) topology in which each $U_i$ is open and each $\varphi_i$ is a homeomorphism. The equivalence class of compatible\footnote{cf. \cite[p.~22]{lang1972}} atlasses is referred to as \emph{structure of a $C^p$-manifold} on $X$. If all $E_i$ are isomorphic, then there exists a Banach space $V$ and an atlas with $E_i=V$ for all $i$. In this case, $X$ is called \emph{$V$-Banach manifold}.  
\end{definition}

For the concepts of abstract tangent spaces, $C^m$-smooth mappings between manifolds and their derivatives (or ``\emph{tangents}''), see \cite[Definitions~3.3.3, 3.3.6 and~3.2.5]{abrahammarsdenratiu1988}.

\begin{theorem}[{\cite[Theorem~3.5.1]{abrahammarsdenratiu1988}}]\label{thm:inv-fct}
    Let $X,Y$ be Banach manifolds, $U\subset X$ open and $f\colon U\to Y$ a $C^p$ mapping where $p\in\N$. Assume that, for some point $x_0\in U$, the derivative $df_{x_0}\colon T_{x_0}X\to T_{f(x_0)}Y$ is an isomorphism. Then $f$ is a local $C^p$-diffeomorphism at $x_0$. 
\end{theorem}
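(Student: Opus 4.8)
The plan is to reduce the statement to the classical inverse function theorem in Banach spaces by passing to charts. First I would choose a chart $(U_1,\varphi)$ of $X$ around $x_0$ with $\varphi(U_1)$ open in a Banach space $E$ and $\varphi(x_0)=0$, and a chart $(V_1,\psi)$ of $Y$ around $f(x_0)$ with $\psi(V_1)$ open in a Banach space $F$ and $\psi(f(x_0))=0$. Shrinking $U_1$ if necessary, one may assume $U_1\subset U$ and $f(U_1)\subset V_1$, so that the local representative
\[
\tilde f\vcentcolon=\psi\circ f\circ\varphi^{-1}\colon \varphi(U_1)\subset E\to F
\]
is well-defined and of class $C^p$.

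Next I would compute its derivative at the origin. By the chain rule and the definition of the tangent map in charts, $D\tilde f(0)=D\psi_{f(x_0)}\circ df_{x_0}\circ (D\varphi_{x_0})^{-1}$ as a map $E\to F$. Since charts are $C^p$-diffeomorphisms onto open subsets of Banach spaces, $D\varphi_{x_0}$ and $D\psi_{f(x_0)}$ are toplinear isomorphisms; combined with the hypothesis that $df_{x_0}$ is an isomorphism, this shows $D\tilde f(0)\colon E\to F$ is a toplinear isomorphism.

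Now I would invoke the inverse function theorem in Banach spaces (see \cite[Section~2.5]{abrahammarsdenratiu1988}): there exist open neighborhoods $A$ of $0$ in $E$ with $A\subset\varphi(U_1)$ and $B$ of $0$ in $F$ such that $\tilde f|_A\colon A\to B$ is a $C^p$-diffeomorphism. This is the genuine analytic core of the argument, and the main obstacle: after normalizing so that $D\tilde f(0)=\mathrm{id}_E$, one rewrites the equation $\tilde f(x)=y$ as a fixed-point problem $x=x-\tilde f(x)+y=\vcentcolon G_y(x)$, uses that $DG_y$ has small operator norm near $0$ to make $G_y$ a uniform contraction on a small ball for all $y$ close to $0$, applies the Banach fixed point theorem to obtain a continuous local inverse, and finally bootstraps its regularity to $C^p$ using the identity $D(\tilde f^{-1})(y)=\bigl(D\tilde f(\tilde f^{-1}(y))\bigr)^{-1}$ together with the smoothness of the inversion map on the open set of invertible operators in $\mathcal L(E,F)$.

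Finally I would transport back: set $U_0\vcentcolon=\varphi^{-1}(A)$ and $V_0\vcentcolon=\psi^{-1}(B)$. Then $U_0$ is an open neighborhood of $x_0$ in $X$, $V_0$ an open neighborhood of $f(x_0)$ in $Y$, and $f|_{U_0}=\psi^{-1}\circ(\tilde f|_A)\circ(\varphi|_{U_0})$ is a composition of $C^p$-diffeomorphisms, hence a $C^p$-diffeomorphism $U_0\to V_0$. Thus $f$ is a local $C^p$-diffeomorphism at $x_0$. Everything involving the manifold structure is mere bookkeeping with charts; the only delicate input is the Banach-space inverse function theorem itself, in particular the $C^p$-regularity of the inverse.
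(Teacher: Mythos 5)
Your proposal is correct and is precisely the standard reduction to the Banach space inverse function theorem via charts that the cited reference \cite[Theorem~3.5.1]{abrahammarsdenratiu1988} uses; the paper itself gives no proof, merely citing that source. Nothing to add.
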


In this article, whenever applying the abstract theory developed in \Cref{sec:abstr-theory}, we usually work on submanifolds of Banach spaces which are given by level-sets of submersions. For a definition of submanifolds, see \cite[Definition~3.2.1]{abrahammarsdenratiu1988}.

\begin{remark}
    Consider a Banach space $V$ and a submanifold $\mathcal{M}\subset V$. For $x\in \mathcal{M}$, we make the identification
    \begin{equation}
        T_x\mathcal{M}=\{\gamma'(0)\mid \exists\varepsilon>0,\gamma\in C^1((-\varepsilon,\varepsilon),V)\text{ with }\gamma((-\varepsilon,\varepsilon))\subset\mathcal{M}\text{ and }\gamma(0)=x\}.
    \end{equation}
\end{remark}

\begin{remark}\label{rem:on-submanifolds}
    Proceeding as in \cite[Example~2.10, Theorem~3.1 and Proposition~3.3]{rupp2020}, also see \cite[Theorem~3.5.4]{abrahammarsdenratiu1988}, one obtains the following. Consider Banach spaces $V,Y$, a $C^p$ map $\mathcal{B}\colon U\to Y$ on $U\subset V$ open, $x_0\in U$ such that $\mathcal{B}(x_0)=0$, $\mathcal{B}'(x_0)\colon V\to Y$ is surjective and write $V_0\vcentcolon=\ker\mathcal{B}'(x_0)$. Suppose that $V=V_0\oplus V_1$ for a closed subspace $V_1\subset V$. Then there exist open sets $\Omega_0\subset V_0$, $\Omega_1\subset V_1$ with $x_0\in\Omega=\Omega_0\times\Omega_1\subset U$ and a $C^p$ map $\psi\colon \Omega_0\to\Omega_1$ such that, for $\mathcal{M}\vcentcolon=\mathcal{B}^{-1}(0)\cap\Omega$,
    \begin{equation}
        \mathcal{M} = \{w+\psi(\omega):\omega\in\Omega_0\}\subset\Omega.
    \end{equation}
    Moreover, $\alpha\colon \Omega\to V$ with $\alpha(\omega+v_1)\vcentcolon=\omega+(v_1-\psi(\omega))$ for $\omega\in\Omega_0$ and $v\in V_1$ is a $C^p$-diffeomorphism onto its image with
    \begin{equation}
        \alpha(\mathcal{M})=\Omega_0\subset V_0\quad\text{and}\quad \alpha'(x_0)v_0=v_0\text{ for all $v_0\in V_0$}.
    \end{equation}
    In particular, $\mathcal{M}$ is a ($C^p$-regular) $V_0$-Banach manifold and $T_x\mathcal{M}=\ker\mathcal{B}'(x)$ for all $x\in \mathcal{M}$.
\end{remark}

\section{On analyticity of superposition operators}\label{sec:analycompo}

For the \L ojasiewicz-Simon inequality we need that the mappings involved are analytic. Here we shortly present a useful result to prove such regularity for compositions of mappings. We follow \cite[Chapter II, \textsection 5]{tullio1988}.

\begin{definition}
    Let $\Omega \subset \R^n$ be an open and bounded domain and $U \subset \R^N$ be open. Let $f=f(x,y)\colon \Omega \times U \to \R$, be smooth on $\overline{\Omega} \times U$. Then,
    \begin{enumerate}[(a)]
        \item $f$ is \emph{analytic in $y$ at $y_0 \in U$ uniformly with respect to $x$} if for each $x_0 \in \overline{\Omega}$ there is a neighborhood $U_0 \subset \overline{\Omega} \times U$ of $(x_0,y_0)$ such that
        $$f(x,y) = \sum_{k=0}^{\infty} \frac{1}{k!} \partial^{(k)}_y f (x,y_0)  (y-y_0)^k,$$
        for all $(x,y) \in U_0$; 
        \item $f$ is \emph{analytic in $y$ uniformly with respect to $x$} if for each $y_0 \in U$,  $f$ is analytic in $y$ at $y_0$ uniformly with respect to $x$.
        \item a vector valued map $\vec{f}: \Omega \times U \to \R^{M}$, $\vec{f}(x,y)$, that is $C^{\infty}$ on $\overline{\Omega} \times U$ is   \emph{analytic in $y$ uniformly with respect to $x$} if each of its $M$ component is analytic in $y$ uniformly with respect to $x$.
    \end{enumerate}
\end{definition}

\begin{remark}\label{rem:B2} With the same notation as above, one finds that  $f$ is analytic in $y$ uniformly with respect to $x$ if and only if, for every compact $K \subset U$ there is a constant $C_K > 0$ such that
\begin{equation}
|\partial_y^{\beta}f(x,y)| \leq C_K^{1+|\beta|} \beta! \mbox{ for all }(x,y) \in \overline{\Omega} \times K, \beta \in \N_0^N  .
\end{equation}
\end{remark}

\begin{theorem}\label{thm:analytic_composition}
 Let $U\subset \R^N$ be open and $\Omega \subset \R^n$ be a bounded domain with the cone property.  
 Let $M \in \N$, $m \in \N$, $p \in (1,\infty)$ and $r \in \N_0$ such that $(m+r)p>n$. 
 
 Assume that $f \in C^{\infty}(\overline{\Omega} \times U, \R^M)$ and that for all $\alpha \in \N_0^n$ with $|\alpha|\leq m$, the mappings $(x, y) \mapsto D^{\alpha}_x f(x,y)$,  are analytic in $y$ uniformly with respect to $x$. Then,  the superposition operator
 $$ F: W^{m+r,p}(\Omega,\R^N) \to  W^{m,p}(\Omega,\R^M), \quad F(\sigma)( x ) = f(x,\sigma(x)), \, x \in \Omega,$$
 is analytic.
\end{theorem}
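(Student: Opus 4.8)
\emph{Proof strategy.} The plan is to follow the line of argument in \cite[Chapter~II, \textsection 5]{tullio1988}: one fixes an arbitrary reference map $\sigma_0$ in the (open) domain of $F$, expands $f(x,\cdot)$ into its Taylor series around $\sigma_0(x)$, shows that the corresponding series of homogeneous polynomials converges in the Banach space of bounded symmetric multilinear maps $W^{m+r,p}(\Omega,\R^N)\times\cdots\times W^{m+r,p}(\Omega,\R^N)\to W^{m,p}(\Omega,\R^M)$, and finally identifies its sum with $F$ near $\sigma_0$. Since $\sigma_0$ is arbitrary, analyticity on the whole domain follows.

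\emph{Preliminaries.} Because $(m+r)p>n$, the Sobolev embedding gives $W^{m+r,p}(\Omega)\hookrightarrow C^0(\overline\Omega)$, so that $\mathcal O\vcentcolon=\{\sigma\in W^{m+r,p}(\Omega,\R^N)\mid \sigma(\overline\Omega)\subset U\}$ — the natural domain of $F$ — is open and, for $\sigma\in\mathcal O$, the image $\sigma(\overline\Omega)$ is a compact subset of $U$. Moreover $W^{m+r,p}(\Omega)$ is a Banach algebra, and pointwise multiplication by a $W^{m+r,p}(\Omega)$-function is a bounded operator on $W^{j,p}(\Omega)$ for each $0\le j\le m+r$; in particular $W^{m,p}(\Omega)$ is a $W^{m+r,p}(\Omega)$-module. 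Combining this with the chain rule (Faà di Bruno) one first checks the preliminary fact that, for each $k\in\N_0$, the map $\sigma\mapsto (D_y^{k}f)(\cdot,\sigma(\cdot))$ is well-defined and locally bounded from $\mathcal O\subset W^{m+r,p}$ into $W^{m,p}$ — note that only spatial derivatives $D_x^\alpha$ with $|\alpha|\le m$ appear, which is exactly the range covered by the hypothesis.

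\emph{The main estimate and identification.} Fix $\sigma_0\in\mathcal O$, pick a compact $K\subset U$ containing $\sigma_0(\overline\Omega)$ in its interior, and let $C_K$ be as in \Cref{rem:B2}, so that $|D_x^\alpha D_y^\beta f(x,y)|\le C_K^{1+|\beta|}\beta!$ for $(x,y)\in\overline\Omega\times K$, $|\alpha|\le m$, $\beta\in\N_0^N$. For $k\in\N_0$ I would consider the symmetric $k$-linear map $A_k(\eta_1,\dots,\eta_k)(x)\vcentcolon=\tfrac1{k!}(D_y^{k}f)(x,\sigma_0(x))[\eta_1(x),\dots,\eta_k(x)]$, componentwise a finite combination of terms $\tfrac1{k!}\binom{k}{\gamma}(\partial_y^\gamma f)(\cdot,\sigma_0(\cdot))\prod_{j}\eta_j^{\gamma}$ with $|\gamma|=k$. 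Expanding $D^\alpha A_k(\cdots)$ for $|\alpha|\le m$ by Leibniz' rule and Faà di Bruno (the latter applied to $D^{\alpha_1}[(\partial_y^\gamma f)(\cdot,\sigma_0(\cdot))]$, which only brings in $D_x^\beta\partial_y^{\gamma+\delta}f$ with $|\beta|\le m$, $|\delta|\le m$, and derivatives of $\sigma_0$ of order $\le m$), and estimating the resulting products with the module/algebra inequalities above together with the factorial bound of \Cref{rem:B2}, one arrives at $\|A_k\|_{\mathcal L^k(W^{m+r,p};W^{m,p})}\le C(\sigma_0,f,K,m)\,(1+k)^{N-1}\Lambda^{k}$ for some $\Lambda=\Lambda(C_K,\Omega,m)<\infty$; the point is that the combinatorial weight $\tfrac1{k!}\binom{k}{\gamma}=\tfrac1{\gamma!}$ absorbs the $\gamma!$ coming from $(\gamma+\delta)!\le C(m,N)2^{k}\gamma!$, leaving only geometric growth in $k$. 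Hence $\sum_{k\ge0}A_k$ has positive radius of convergence and defines an analytic map $G$ on some ball $B_\rho(\sigma_0)$. Shrinking $\rho$ so that also $B_\rho(\sigma_0)\subset\mathcal O$ and $\|\sigma-\sigma_0\|_{C^0(\overline\Omega)}$ stays small enough that the pointwise Taylor series $f(x,\sigma(x))=\sum_{k}\tfrac1{k!}(D_y^{k}f)(x,\sigma_0(x))[(\sigma-\sigma_0)(x)]^{k}$ converges for all $x\in\overline\Omega$ (possible since $f(x,\cdot)$ is analytic near $K$ uniformly in $x$, the case $\alpha=0$ of the hypothesis), the same estimates show the partial sums converge in $W^{m,p}(\Omega)$, with pointwise limit $f(\cdot,\sigma(\cdot))$; therefore $F(\sigma)=G(\sigma)$ on $B_\rho(\sigma_0)$, and $F$ is analytic there.

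\emph{Main obstacle.} The delicate part is the bookkeeping in the main estimate: one must organize the Leibniz and Faà di Bruno expansions of $D^\alpha A_k$, track that every spatial derivative that hits $f$ has order $\le m$ (so that \Cref{rem:B2} applies), and verify that the weights $\tfrac1{k!}\binom{k}{\gamma}$ exactly cancel the factorial growth $(\gamma+\delta)!$ so that — after the polynomially-many Sobolev product estimates — only a geometric factor $\Lambda^k$ remains. Everything else (the openness of $\mathcal O$, the algebra/module inequalities, and the convergence of the pointwise Taylor series) is a routine combination of Sobolev embedding and elementary estimates.
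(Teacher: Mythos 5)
The paper does not prove \Cref{thm:analytic_composition} itself; it cites the result from \cite[Chapter~II, \textsection 5]{tullio1988}, and your sketch is a correct reconstruction of exactly that argument. The chain — openness of the domain via $(m+r)p>n$, the candidate power series $A_k$ from Taylor expansion of $f(x,\cdot)$ about $\sigma_0(x)$, the bound $\|A_k\|\le C(1+k)^{N-1}\Lambda^k$ obtained from Leibniz/Fa\`a di Bruno together with \Cref{rem:B2} (noting that only $D_x^\beta\partial_y^{\gamma+\delta}f$ with $|\beta|,|\delta|\le m$ arise, so the hypothesis for $|\alpha|\le m$ suffices, and that $\tfrac{1}{\gamma!}(\gamma+\delta)!\le C(m,N)2^{k}$ cancels the factorial growth), and the identification of the sum with $F$ via the uniform pointwise Taylor convergence (the $\alpha=0$ case of the hypothesis) — is sound, and I see no gap.
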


\begin{lemma}\label{lem:analit-local}
    Let $\Sigma$ be a compact connected smooth surface with or without boundary. Let $U\subset \R^N$ be open and let $f\in C^\infty(\Sigma\times U,\R^M)$. Let $m,n\in\N_0$, $p,q\in [1,\infty]$. Then the superposition operator
    $$ F: W^{n,q}(\Sigma,\R^N) \to  W^{m,p}(\Sigma,\R^M), \quad F(\sigma)( x ) = f(x,\sigma(x)), \, x \in \Sigma$$
    is analytic if and only if there exists a finite covering of $\Sigma$ by local inverse charts $\psi_j\colon \Omega_j\overset{\approx}{\to}\psi_j(\Omega_j)\subset\Sigma$, $\Omega_j\subset \R\times[0,\infty)$ open
    with $\psi_j\in C^\infty(\bar \Omega_j,\Sigma)$ for all $j\in J$, such that the superposition operator
    $$G_j\colon W^{n,q}(\Omega_j,\R^N) \to  W^{m,p}(\Omega_j,\R^M),  \quad G_j(\tilde\sigma)(x)  = f(\psi_j(x),\tilde\sigma(x))$$
    is analytic for each $j\in J$.
\end{lemma}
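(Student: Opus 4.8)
The argument is a partition of unity reduction, combined with the elementary closure properties of analytic maps between Banach spaces used throughout the paper: bounded linear maps are analytic, finite sums of analytic maps are analytic, and a composition of two analytic maps is analytic. The geometric input is that $W^{k,s}(\Sigma)$ is by definition characterised through a fixed finite atlas and a subordinate partition of unity, and that chart and transition maps are smooth with derivatives bounded on the relevant compact sets. Concretely, I will use that for any local inverse chart $\psi\colon\Omega\to\psi(\Omega)\subset\Sigma$ with $\Omega\subset\R\times[0,\infty)$ bounded with the cone property and $\psi\in C^\infty(\bar\Omega,\Sigma)$, the ``restrict to $\psi(\Omega)$ and pull back by $\psi$'' operator $R_\psi^{k,s}\colon W^{k,s}(\Sigma)\to W^{k,s}(\Omega)$ is bounded linear, and for $\chi\in C_c^\infty(\psi(\Omega))$ the operator ``multiply by $\chi$ and transport to $\Sigma$ via $\psi$, extending by zero'', $S_{\psi,\chi}^{k,s}\colon W^{k,s}(\Omega)\to W^{k,s}(\Sigma)$, is bounded linear, for all $k\in\N_0$, $s\in[1,\infty]$. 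Being bounded linear, all of these are analytic.

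\emph{($\Leftarrow$).} Let $\{\psi_j\colon\Omega_j\to\psi_j(\Omega_j)\}_{j\in J}$ be the finite covering from the hypothesis, with $G_j$ analytic for each $j$. Since $\Sigma$ is compact, pick a shrinking of the open cover $\{\psi_j(\Omega_j)\}_{j\in J}$ and a subordinate partition of unity $\{\rho_j\}_{j\in J}\subset C^\infty(\Sigma)$ with $\operatorname{supp}\rho_j\subset\psi_j(\Omega_j)$ and $\sum_j\rho_j\equiv1$. For $\sigma\in W^{n,q}(\Sigma)$ one has, pulling back to $\Omega_j$, the identity $(\rho_jF(\sigma))\circ\psi_j=(\rho_j\circ\psi_j)\cdot G_j(R_{\psi_j}^{n,q}\sigma)$ on $\Omega_j$; since $\rho_j\circ\psi_j\in C_c^\infty(\Omega_j)$ and $\rho_jF(\sigma)$ vanishes off $\psi_j(\Omega_j)$, this means $\rho_jF(\sigma)=S_{\psi_j,\rho_j\circ\psi_j}^{m,p}\big(G_j(R_{\psi_j}^{n,q}\sigma)\big)$ in $W^{m,p}(\Sigma)$. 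Hence $\sigma\mapsto\rho_jF(\sigma)$ is a composition of analytic maps, and $F=\sum_{j\in J}\rho_jF$ is analytic.

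\emph{($\Rightarrow$).} Here I only need to exhibit one suitable covering. Using compactness of $\Sigma$, fix a finite atlas $\{(\hat\Omega_j,\hat\psi_j)\}_{j\in J}$ with each $\hat\Omega_j\subset\R\times[0,\infty)$ bounded with the cone property, together with open $\Omega_j$ with $\overline{\Omega_j}\ssubset\hat\Omega_j$ such that, setting $\psi_j\vcentcolon=\hat\psi_j|_{\Omega_j}$, one still has $\bigcup_j\psi_j(\Omega_j)=\Sigma$; the $\psi_j$ form the desired covering. Fix $\chi_j\in C_c^\infty(\hat\psi_j(\hat\Omega_j))$ with $\chi_j\equiv1$ on a neighbourhood of $\overline{\psi_j(\Omega_j)}$, and a bounded linear extension operator $E_j\colon W^{n,q}(\Omega_j)\to W^{n,q}(\hat\Omega_j)$ (a higher-order reflection extension adapted to $\R\times[0,\infty)$, available by the cone property and $\overline{\Omega_j}\ssubset\hat\Omega_j$). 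Then $S_j\vcentcolon=S_{\hat\psi_j,\chi_j}^{n,q}\circ E_j\colon W^{n,q}(\Omega_j)\to W^{n,q}(\Sigma)$ is bounded linear and satisfies $(S_j\tilde\sigma)\circ\psi_j=\tilde\sigma$ on $\Omega_j$, whence $G_j=R_{\psi_j}^{m,p}\circ F\circ S_j$. As $S_j$ and $R_{\psi_j}^{m,p}$ are bounded linear and $F$ is analytic, $G_j$ is analytic.

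\emph{Main obstacle.} No step is conceptually hard; the technical core is the boundedness of the restriction, transport, and extension operators between the manifold Sobolev spaces and their chart counterparts — in particular the existence of a bounded Sobolev extension operator on the chart domains in the boundary case $\Omega_j\subset\R\times[0,\infty)$, and the fact that the norm on $W^{k,s}(\Sigma)$ defined via a fixed atlas and partition of unity is equivalent to (and compatible with) the characterisations above. These are classical and entirely independent of $f$, so I would only sketch them and refer to the standard references.
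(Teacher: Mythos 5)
Your proof is correct and follows essentially the same strategy as the paper's: the ``if'' direction uses a partition of unity subordinate to $\{\psi_j(\Omega_j)\}$ together with pull-back, multiplication by a cutoff, and extension by zero (all bounded linear, hence analytic) to write $F$ as a finite sum of compositions through the $G_j$; the ``only if'' direction sandwiches $F$ between a bounded linear extension operator and a restriction/pull-back to recover $G_j$. Your version is slightly more explicit — you name the operators $R_\psi^{k,s}$, $S_{\psi,\chi}^{k,s}$ and build the extension at the chart level rather than on $\Sigma$ directly — but the decomposition and the key observation (boundedness of these auxiliary linear maps) are the same, and both proofs gloss over the same domain issue (that $f$ is only defined for values in $U$, so $F$ and $G_j$ are a priori defined only on an open subset of the Sobolev space).
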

\begin{proof}
For the `if' part of the statement, let $\chi_j$, $j\in J$, be a partition of unity, subordinate to $\psi_j(\Omega_j)$. We write
\begin{align}\label{eq:analytic_decomposition}
F(\sigma) = \sum_{j\in J} \chi_j f(\cdot, \sigma) = \sum_{j\in J}[\chi_j\circ \psi_j f(\psi_j, \sigma\circ\psi_j)]\circ\psi_j^{-1}.
\end{align}
Since $\psi_j\in C^\infty(\bar \Omega,\Sigma)$, the map
\begin{align}
W^{n,q}(\Sigma,\R^N)\to W^{n,q}(\Omega_j, \R^N), \sigma\mapsto \sigma\circ \psi_j
\end{align}
is linear and bounded, hence analytic. Hence, by assumption,
\begin{align}
W^{n,q}(\Sigma,\R^N) \to W^{m,p}(\Omega_j,\R^M), \sigma\mapsto \chi_j\circ\psi_j f(\psi_j, \sigma\circ\psi_j)
\end{align}
is analytic as a composition of analytic maps. 
Arguing similarly as above, we find that
\begin{align}
W^{n,q}(\Sigma,\R^N) \to W^{m,p}_c(\psi_j(\Omega_j),\R^M), \sigma\mapsto [\chi_j\circ\psi_j f(\psi_j, \sigma\circ\psi_j)]\circ\psi_j^{-1}
\end{align}
is analytic, where $W^{m,p}_c(\psi_j(\Omega_j),\R^M)$ is the closure in $W^{m,p}(\psi_j(\Omega_j),\R^M)$ of all smooth functions $u\in C^\infty(\Sigma,\R^M)$ whose support is contained in $\psi_j(\Omega_j)$. The extensions by zero
\begin{align}
W^{m,p}_c(\psi_j(\Omega_j),\R^M)\to W^{m,p}(\Sigma,\R^M)
\end{align}
are linear and bounded, hence analytic. Finally, taking the finite sum over $j\in J$, the claim follows from \eqref{eq:analytic_decomposition}.

For the `only if' part, the existence of a finite covering as in the statement is standard. For any $j\in J$, choose bounded and linear extension and restriction operators \linebreak $E_j\colon W^{n,q}(\psi_j(\Omega_j),\R^N) \to W^{n,q}(\Sigma,\R^N)$ and $R_j\colon W^{m,p}(\Sigma,\R^M)\to W^{m,p}(\psi_j(\Omega_j),\R^M)$. By the assumption, we find that
\begin{align}
    K_j \vcentcolon = R_j\circ F\circ E_j\colon  W^{n,q}(\psi_j(\Omega_j),\R^N) \to W^{m,p}(\psi_j(\Omega_j),\R^M)
\end{align}
is analytic. Arguing as above, we conclude that
\begin{align}
 W^{n,q}(\Omega_j,\R^N) \to W^{m,p}(\Omega_j,\R^M), \tilde\sigma \mapsto K_j(\tilde\sigma\circ \psi_j^{-1})\circ \psi_j
\end{align}
is analytic. This is exactly the map $G_j$ yielding the assertion.
\end{proof}

\section{Technical proofs}\label{app:techproofs}

\begin{proof}[Proof of \eqref{eq:deltaE} and \eqref{eq:loja-ass-1}]
    Without loss of generality, we may suppose that $w$ and $\varphi$ are smooth. Since $\varphi\in T_{w}\mathcal{M}$, there exists $\gamma\colon(-\varepsilon,\varepsilon)\to\mathcal{M}$ smooth with $\gamma(0)=w$ and $\gamma'(0)=\varphi$. We compute
    \begin{equation}
        \partial_{t,0}f_{\gamma(t)}=\partial_{t,0}\Phi(\cdot,\gamma(t))=\varphi\cdot \xi\circ f_w=\vcentcolon\varphi^{\perp}\nu_{f_w}+df_w.\varphi^{\top}
    \end{equation}
    where $\varphi^{\perp}=\varphi\langle\xi\circ f_w,\nu_{f_w}\rangle$. Moreover, using \Cref{lem:def-fw}, $f_{w+t\varphi}(\partial \Sigma)\subset S$ for $|t|>0$ sufficiently small. With \eqref{eq:abldistance}, 
    one finds on $\partial \Sigma$
    \begin{align}
        0&=\partial_{t,0} d^S(f_{w+t\varphi}) = \langle N^S\circ f_w,\varphi\cdot\xi\circ f_w\rangle  =  \langle df_w.\eta_{f_w} , df_w .\varphi^{\top}\rangle = \langle \eta_{f_w},\varphi^{\top}\rangle_g, \label{eq:der-B-3}
    \end{align}  
    using that $N^S\circ f_w= \frac{\partial f_w}{\partial\eta_{f_w}}$ on $\partial \Sigma$. For the same reason, with $\tau_{f_w}$ as in the beginning of \Cref{sec:loja_Willmore}, on $\partial\Sigma$
    \begin{align}
        0&=\frac{\partial}{\partial\tau_{f_w}} \langle \nu_{f_w},N^S\circ f_w\rangle  =  \langle \frac{\partial}{\partial\tau_{f_w}} \nu_{f_w},\frac{\partial f_w}{\partial \eta_{f_{w}}}\rangle + \langle \nu_{f_w},\frac{\partial}{\partial \tau_{f_w}} (N^S\circ f_w)\rangle \\
        &= - A(\tau_{f_w},\eta_{f_{w}}) - A^S(\nu_{f_w},\frac{\partial f_w}{\partial\tau_{f_w}}).\label{eq:der-B-4}
    \end{align}
    Proceeding as on \cite[p.~6, after Equation (2.14)]{alessandronikuwert2016}, again using $\frac{\partial f_w}{\partial\eta_{f_w}}= N^S\circ f_w$, one now computes on $\partial \Sigma$
    \begin{align}
        0=\partial_{t,0} \langle \nu_{f_{\gamma(t)}},N^S\circ f_{\gamma(t)}\rangle = - \frac{\partial \varphi^{\perp}}{\partial\eta_{f_w}} - \varphi^{\perp}A^S(\nu_{f_w},\nu_{f_w}) -  A(\varphi^{\top},\eta_{f_w}) - A^S(\nu_{f_w},df_w.\varphi^{\top}).
    \end{align}
    By \eqref{eq:der-B-3}, $\varphi^{\top}$ is parallel to $\tau$ and thus, using \eqref{eq:der-B-4}, $0= A(\varphi^{\top},\eta_{f_w}) + A^S(\nu_{f_w},df_w.\varphi^{\top})$ so that
    \begin{equation}\label{eq:der-B-1}
        0= \frac{\partial \varphi^{\perp}}{\partial\eta_{f_w}} + \varphi^{\perp}A^S(\nu_{f_w},\nu_{f_w}).
    \end{equation}    
    Using \cite[Theorem~1]{alessandronikuwert2016},  
    \begin{align}
        2\E'(w)\varphi &= 2\partial_{t,0}\W(f_{w+t\varphi}) \\
        &= \int_\Sigma \delta\E(w) \varphi^{\perp} \langle \xi\circ f_w,\nu_{f_w}\rangle \dd\mu_{f_w} 
        \\
        &\qquad - \int_{\partial \Sigma} \Big( \varphi^{\perp}\frac{\partial H_{f_w}}{\partial\eta_{f_w}} - \frac{\partial \varphi^{\perp}}{\partial\eta_{f_w}}H_{f_w}
 \Big) - \frac12 H_{f_w}^2\langle \varphi^{\top},\eta_{f_w}\rangle_g\dd s.
    \end{align}
    By \eqref{eq:der-B-3}, $\langle \varphi^{\top},\eta_{f_w}\rangle_g=0$ on $\partial \Sigma$. Moreover, using \eqref{eq:der-B-1} and \eqref{intro-eq:fbc}, one also finds
    \begin{equation}
         \varphi^{\perp}\frac{\partial H_{f_w}}{\partial\eta_{f_w}} - \frac{\partial \varphi^{\perp}}{\partial\eta_{f_w}}H_{f_w} = 0\quad\text{on $\partial \Sigma$}.
    \end{equation}
    Altogether, \eqref{eq:deltaE} is proved. Then \eqref{eq:loja-ass-1} is immediate choosing $a_2$ sufficiently small such that all metrics $g_{f_w}$ are uniformly equivalent to $g_{\bar{f}}$ for $w\in U_2$ by the Sobolev embedding $W^{4,2}(\Sigma)\hookrightarrow C^1(\Sigma)$, and using \eqref{eq:estimate-xi0fw-in-nu-fw}.
\end{proof}

\subsection{Proof of {\Cref{prop:reparam-as-fw}}}\label{app:proof-prop:reparam-as-fw}

Using the classification of surfaces \cite[Chapter~9, Theorem~3.7]{Hirsch} and Nash's embedding theorem, we can assume that there is a closed embedded surface $\hat\Sigma\subseteq\R^M$ such that $\Sigma$ is a submanifold of $\hat\Sigma$. By \cite[Theorem~1 in Sect. 2.12.3]{Simon1996}, there exist tubular neighborhoods $U_{\partial\Sigma}$ and $U_{\hat\Sigma}$ of the closed submanifolds $\partial\Sigma$ resp.\ $\hat\Sigma$ in $\R^M$ such that the associated nearest point projections $\Pi^{\partial\Sigma}\colon U_{\partial\Sigma}\to\partial\Sigma$ resp.\ $\Pi^{\hat\Sigma}\colon U_{\hat\Sigma}\to\hat\Sigma$ are smooth. Moreover, for $x\in\partial\Sigma$, $d\Pi^{\partial\Sigma}_x\colon \R^M\to T_x\partial\Sigma$ is the orthogonal projection onto $T_x\partial\Sigma$.

\begin{lemma}\label{lem:D-submanifold}
    There exists $\varepsilon>0$ such that
    \begin{align}
        \mathcal{R} &\vcentcolon = \{\zeta\in W^{4,2}(\Sigma,\R^M):\|\zeta\|_{W^{4,2}(\Sigma)}<\varepsilon , 
        x+\zeta(x)\in\partial\Sigma\text{ for $x\in\partial \Sigma$}\}
    \end{align}
    
    is a Banach manifold, and $\psi_\zeta(x)\vcentcolon=\Pi^{\hat\Sigma}(x+\zeta(x))$ for $x\in\Sigma$ defines a $C^1$-diffeomorphism $\psi_\zeta\colon\Sigma\to\Sigma$ for each $\zeta\in\mathcal{R}$. Furthermore, 
    \begin{equation}\label{eq:D-tan-space}
        T_0\mathcal{R} = \{\zeta\in W^{4,2}(\Sigma,\R^M):\zeta(x)\in T_x\partial\Sigma\text{ for all $x\in\partial\Sigma$}\}.
    \end{equation}
\end{lemma}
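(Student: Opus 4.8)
The plan is to realize $\mathcal{R}$ as the zero set of a submersion, apply \Cref{rem:on-submanifolds}, and obtain the diffeomorphism property from $C^1$-closeness of $\psi_\zeta$ to the inclusion $\iota\colon\Sigma\hookrightarrow\hat\Sigma$. To set up the constraint operator, let $\Theta\colon U_{\partial\Sigma}\to\R^M$, $\Theta(y)\vcentcolon=y-\Pi^{\partial\Sigma}(y)$; this is smooth on the tubular neighborhood, vanishes exactly on $\partial\Sigma$, and for $x\in\partial\Sigma$ one has $d\Theta_x=\mathrm{Id}-d\Pi^{\partial\Sigma}_x$, which is the orthogonal projection onto the normal space $N_x\partial\Sigma\subset\R^M$, since $d\Pi^{\partial\Sigma}_x$ is the orthogonal projection onto $T_x\partial\Sigma$. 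For $\zeta$ small in $W^{4,2}(\Sigma,\R^M)$ — which embeds into $C^1(\Sigma,\R^M)$, so that $x+\zeta(x)\in U_{\partial\Sigma}$ for $x\in\partial\Sigma$ — I would define
\begin{equation}
\mathcal{B}(\zeta)\vcentcolon=\Theta\big(\,\cdot+\zeta(\cdot)\,\big)\big|_{\partial\Sigma},
\end{equation}
so that $\mathcal{R}=\mathcal{B}^{-1}(0)\cap B_\varepsilon(0)$ by construction. Composing the bounded trace $W^{4,2}(\Sigma,\R^M)\to W^{7/2,2}(\partial\Sigma,\R^M)$ with the superposition operator induced by the smooth map $\Theta$, which is analytic by the results of \Cref{sec:analycompo} (\Cref{thm:analytic_composition}, \Cref{lem:analit-local}), using $W^{7/2,2}(\partial\Sigma)\hookrightarrow C^0(\partial\Sigma)$ exactly as in the proof of \Cref{lem:choice-of-bdryconditions}, shows $\mathcal{B}$ is analytic, in particular $C^1$. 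A direct computation then gives $\mathcal{B}'(0)\zeta=\big(x\mapsto(\mathrm{Id}-d\Pi^{\partial\Sigma}_x)\zeta(x)\big)\big|_{\partial\Sigma}$, i.e.\ $\mathcal{B}'(0)$ is the trace followed by fibrewise orthogonal projection onto the normal bundle $N\partial\Sigma$.

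Next I would verify the hypotheses of \Cref{rem:on-submanifolds} for $\mathcal{B}$ at $0$, taking as target the Banach space $\mathcal{Y}$ of $W^{7/2,2}$-sections of $N\partial\Sigma$ (a closed subspace of $W^{7/2,2}(\partial\Sigma,\R^M)$; that $\mathcal{B}$ does land there, up to the harmless identification of the fibre over $x$ with the fibre over the nearby foot point $\Pi^{\partial\Sigma}(x+\zeta(x))$, is checked in local trivializations of $N\partial\Sigma$ over a finite cover of the compact $1$-manifold $\partial\Sigma$; alternatively one patches these trivializations into a genuine $\R^{M-1}$-valued submersion). Surjectivity of $\mathcal{B}'(0)\colon W^{4,2}(\Sigma,\R^M)\to\mathcal{Y}$ holds since the trace is surjective onto $W^{7/2,2}(\partial\Sigma,\R^M)$ and the fibrewise projection maps this onto $\mathcal{Y}$; moreover $\ker\mathcal{B}'(0)$ is complemented, because a bounded linear extension operator $E\colon W^{7/2,2}(\partial\Sigma,\R^M)\to W^{4,2}(\Sigma,\R^M)$ restricts to a bounded right inverse of $\mathcal{B}'(0)$ on $\mathcal{Y}$ (if $\gamma\in\mathcal{Y}$ then $\mathcal{B}'(0)(E\gamma)=\gamma$). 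Thus \Cref{rem:on-submanifolds} applies and $\mathcal{R}$ is a Banach submanifold with
\begin{equation}
T_0\mathcal{R}=\ker\mathcal{B}'(0)=\big\{\zeta\in W^{4,2}(\Sigma,\R^M)\mid(\mathrm{Id}-d\Pi^{\partial\Sigma}_x)\zeta(x)=0\ \text{for all }x\in\partial\Sigma\big\}=\big\{\zeta\mid\zeta(x)\in T_x\partial\Sigma\ \text{for all }x\in\partial\Sigma\big\},
\end{equation}
which is \eqref{eq:D-tan-space}.

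For the diffeomorphism claim, for $\varepsilon$ small the embedding $W^{4,2}(\Sigma,\R^M)\hookrightarrow C^1$ makes $\psi_\zeta=\Pi^{\hat\Sigma}\circ(\mathrm{id}_\Sigma+\zeta)$ a well-defined $C^1$ map $\Sigma\to\hat\Sigma$ that is $C^1$-close to $\iota$; hence it is an immersion between equidimensional manifolds (so a local $C^1$-diffeomorphism by the inverse function \Cref{thm:inv-fct}) and, being $C^1$-close to the embedding $\iota$ of the compact $\Sigma$, injective, so a $C^1$-embedding onto a compact codimension-$0$ submanifold-with-boundary $\Sigma_\zeta\subseteq\hat\Sigma$. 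For $x\in\partial\Sigma$ we have $x+\zeta(x)\in\partial\Sigma\subseteq\hat\Sigma$, whence $\psi_\zeta(x)=x+\zeta(x)\in\partial\Sigma$; so $\psi_\zeta(\partial\Sigma)\subseteq\partial\Sigma$, and $\psi_\zeta|_{\partial\Sigma}$, being $C^1$-close to $\mathrm{id}_{\partial\Sigma}$, is a $C^1$-diffeomorphism of the compact $1$-manifold $\partial\Sigma$, so $\partial\Sigma_\zeta=\partial\Sigma$. To see $\psi_\zeta(\Sigma)=\Sigma$: in a collar $\partial\Sigma\times[0,\delta)\subseteq\Sigma\subseteq\hat\Sigma$, $\psi_\zeta$ is $C^1$-close to the identity and preserves $\partial\Sigma\times\{0\}$, so in collar coordinates its "inward" derivative at $t=0$ is positive and $\psi_\zeta$ maps the collar into $\Sigma$; away from the collar $\psi_\zeta$ is $C^0$-close to $\iota$ and hence also maps into $\mathrm{int}\,\Sigma$; thus $\psi_\zeta(\Sigma)\subseteq\Sigma$. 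Finally $\psi_\zeta(\mathrm{int}\,\Sigma)$ is open in $\hat\Sigma$ (local diffeomorphism) and compact, hence closed, in $\mathrm{int}\,\Sigma$, and nonempty, so it equals $\mathrm{int}\,\Sigma$ since $\Sigma$ is connected; therefore $\psi_\zeta\colon\Sigma\to\Sigma$ is a $C^1$-diffeomorphism.

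The step I expect to be the main obstacle is showing that $\psi_\zeta$ maps \emph{onto} $\Sigma$ (rather than merely $\Sigma$ into $\hat\Sigma$): this is where the boundary condition $x+\zeta(x)\in\partial\Sigma$ and the connectedness of $\Sigma$ are genuinely used, and it requires the collar argument together with the fact that a codimension-$0$ submanifold-with-boundary is an open-and-closed piece of $\hat\Sigma$ minus its boundary. A secondary technical nuisance is the bookkeeping needed to view $\mathcal{B}$ as a map into the space $\mathcal{Y}$ of normal sections — the foot-point identification — which is why I would actually carry that part out in local trivializations of $N\partial\Sigma$.
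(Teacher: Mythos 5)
Your proposal is correct and takes essentially the same route as the paper: realize $\mathcal{R}$ as the zero set of a constraint map, verify it is a submersion at $0$, invoke \Cref{rem:on-submanifolds}, and obtain the diffeomorphism property from $C^1$-closeness of $\psi_\zeta$ to the inclusion. Two small structural differences are worth noting. First, the paper avoids your ``foot-point identification'' nuisance entirely by composing the difference vector with the projection $\mathrm{Id}-d\Pi^{\partial\Sigma}_x$ before reading it off, i.e.\ it works with
\begin{equation}
G(\zeta)(x)=(\mathrm{Id}-d\Pi^{\partial\Sigma}_x).\bigl(x+\zeta(x)-\Pi^{\partial\Sigma}(x+\zeta(x))\bigr),
\end{equation}
which lands in the fixed Banach space $\{h\in W^{7/2,2}(\partial\Sigma,\R^M)\mid h(x)\perp T_x\partial\Sigma\}$ by construction and still has the same linearization, so no local trivializations or re-indexing of fibres are needed; this is a genuine simplification over your $\mathcal{B}$, and it is the reason your instinct that the foot-point bookkeeping is the awkward part is vindicated — the paper engineers the constraint so that it never arises. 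Second, for the claim $\psi_\zeta(\Sigma)\subset\Sigma$, the paper replaces your collar argument by the cleaner dichotomy: since $\psi_\zeta(\partial\Sigma)=\partial\Sigma$ and $\psi_\zeta$ is a diffeomorphism onto its image, by connectedness $\psi_\zeta(\Sigma\setminus\partial\Sigma)$ is contained in one side of $\partial\Sigma$ in $\hat\Sigma$, and the wrong side is excluded for $\varepsilon$ small; your collar argument and your open-and-closed surjectivity step are both correct, just longer. One minor point to tidy up in your write-up: you phrase the analyticity/$C^1$ verification as ``composing the bounded trace $W^{4,2}(\Sigma)\to W^{7/2,2}(\partial\Sigma)$ with the superposition operator'', but the superposition results of \Cref{sec:analycompo} are for integer-order Sobolev spaces on a domain, so the composition should be taken in the opposite order (apply the superposition on a collar $B_{\alpha_0}(\partial\Sigma)\subset\Sigma$ in $W^{4,2}$ and then take the trace); in any case only $C^1$-differentiability is needed here, which the paper checks directly.
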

\begin{proof} 
Since $W^{4,2}(\Sigma)\hookrightarrow C^1(\Sigma)$, we obtain $\|\psi_\zeta-\mathrm{id}_\Sigma\|_{C^1(\Sigma)}\leq C(\hat\Sigma)\varepsilon$. Choosing $\varepsilon>0$ sufficiently small, we thus have that $\psi_\zeta\colon\Sigma\to\hat\Sigma$ is a diffeomorphism onto its image. Moreover, using $\zeta\in\mathcal R$ and \cite[Proposition 5.11]{MR583436}, we have $\psi_\zeta(\partial\Sigma)=\partial\Sigma$. As $\psi_\zeta$ is a diffeomorphism, since $\Sigma$ is connected and as $\psi_\zeta(\Sigma\setminus\partial\Sigma)\subset\hat\Sigma\setminus\partial\Sigma$, we have either $\psi_\zeta(\Sigma\setminus\partial\Sigma)\subset \Sigma\setminus\partial\Sigma$ or $\psi_\zeta(\Sigma\setminus\partial\Sigma)\subset \hat\Sigma\setminus\Sigma$. Choosing $\varepsilon>0$ sufficiently small, we can exclude the latter option. Altogether, choosing $\varepsilon>0$ sufficiently small, for each $\zeta\in\mathcal R$, we have that $\psi_\zeta\colon\Sigma\to\Sigma$ is a $C^1$-diffeomorphism.  

It remains to show that, choosing $\varepsilon>0$ small enough, $\mathcal R$ is a Banach manifold with \eqref{eq:D-tan-space}. Consider 
    \begin{align}
        &G\colon B_{\varepsilon}(0)\subset W^{4,2}(\Sigma,\R^M)\to \{h\in W^{\frac{7}{2},2}(\partial \Sigma,\R^M)\mid h(x)\perp T_x\partial\Sigma\text{ for all }x\in\partial\Sigma\},\\
        &G(\zeta)(x)\vcentcolon=(\mathrm{Id}-d\Pi_x^{\partial\Sigma}).(x+\zeta(x)-\Pi^{\partial\Sigma}(x+\zeta(x))).
    \end{align}   
    One can verify that, choosing $\varepsilon$ sufficiently small, we have for $\zeta\in B_\varepsilon(0)$ that $G(\zeta)=0$ if and only if $x + \zeta(x)\in\partial\Sigma$ for all $x\in\partial\Sigma$. Moreover, $G$ is continuously differentiable with
    \begin{equation}
        G'(0).\varphi(x) = (\mathrm{Id}-d\Pi_x^{\partial\Sigma}).\varphi(x) \qquad \mbox{ for }\varphi \in W^{4,2}(\Sigma,\R^M).
    \end{equation}
    Thus, $G$ is a submersion 
    in $\zeta=0$ with
    \begin{equation}
        \ker G'(0) = \{\zeta\in W^{4,2}(\Sigma,\R^M):\zeta(x)\in T_x\partial\Sigma\text{ for all $x\in\partial\Sigma$}\}.
    \end{equation}
    
    Altogether, the statement follows from \Cref{rem:on-submanifolds} with $\mathcal R=G^{-1}(\{0\})$ after potentially further shrinking $\varepsilon$.
\end{proof}

\begin{lemma}\label{lem:N-submanifold}
    For a suitably small neighborhood $U$ of $\bar{f}$ in $W^{4,2}(\Sigma,\R^3)$, the set $\mathcal{N}\vcentcolon=\{f\in U\mid f(\partial \Sigma)\subset S\}$ is a Banach manifold of immersions. Furthermore, 
    \begin{equation}\label{eq:N-submanifold}
        T_{\bar{f}}\mathcal{N} = \{h\in W^{4,2}(\Sigma,\R^3):\langle N^S\circ\bar{f},h\rangle\big|_{\partial \Sigma}=0\}.
    \end{equation}
\end{lemma}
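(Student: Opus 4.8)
The plan is to proceed exactly as in the proof of \Cref{lem:D-submanifold}: exhibit $\mathcal{N}$ as the zero set of a suitable submersion and then invoke \Cref{rem:on-submanifolds}. Concretely, using \Cref{rem:geomS}, the signed distance $d^S$ is analytic on $\mathcal{U}_S$, and for $f$ sufficiently $W^{4,2}$-close to $\bar{f}$ (hence, by the Sobolev embedding $W^{4,2}(\Sigma)\hookrightarrow C^1(\Sigma)$, uniformly $C^1$-close), we have $f(\partial\Sigma)\subset \mathcal{U}_S$; in particular $f(\partial\Sigma)\subset S$ if and only if $d^S\circ f\equiv 0$ on $\partial\Sigma$. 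This suggests defining
\begin{equation*}
    G\colon U\subset W^{4,2}(\Sigma,\R^3)\to W^{\frac72,2}(\partial\Sigma,\R), \qquad G(f)\vcentcolon= (d^S\circ f)\big|_{\partial\Sigma},
\end{equation*}
where $U$ is a small $W^{4,2}$-neighborhood of $\bar f$ contained in the immersions and such that $f(\Sigma)\subset\mathcal{U}_S$ is guaranteed near $\partial\Sigma$. Then $\mathcal{N}=G^{-1}(\{0\})\cap U$.

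First I would check that $G$ is well-defined and analytic: the superposition $f\mapsto d^S\circ f$ is analytic from $W^{4,2}(\Sigma,\R^3)$ (restricted to a neighborhood of $\bar f$ in which $f$ maps a neighborhood of $\partial\Sigma$ into $\mathcal U_S$) into $W^{4,2}$ of that neighborhood by \Cref{thm:analytic_composition} and \Cref{lem:analit-local}, and composing with the analytic (linear, bounded) trace operator $\mathrm{tr}_{\partial\Sigma}\colon W^{4,2}\to W^{7/2,2}(\partial\Sigma)$ gives analyticity of $G$. Next I would compute the derivative at $\bar f$: using \eqref{eq:abldistance}, namely $\nabla d^S(y)=N^S(\Pi^S(y))$, for $h\in W^{4,2}(\Sigma,\R^3)$ we get
\begin{equation*}
    G'(\bar f).h = \langle N^S\circ\bar f, h\rangle\big|_{\partial\Sigma} = \langle \nabla d^S\circ\bar f, h\rangle\big|_{\partial\Sigma}.
\end{equation*}
To see that $G'(\bar f)\colon W^{4,2}(\Sigma,\R^3)\to W^{7/2,2}(\partial\Sigma,\R)$ is surjective, note that $N^S\circ\bar f$ is a nowhere-vanishing, smooth unit vector field along $\partial\Sigma$; given any $\rho\in W^{7/2,2}(\partial\Sigma,\R)$, choose any $W^{4,2}$-extension $\tilde\rho$ of $\rho$ to $\Sigma$ (the trace operator is onto with a bounded right inverse) and set $h\vcentcolon= \tilde\rho\,(N^S\circ\bar f)$, suitably extended from a collar of $\partial\Sigma$ to all of $\Sigma$ via a cutoff; then $G'(\bar f).h=\rho$. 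Hence $G$ is a submersion at $\bar f$ with $\ker G'(\bar f)=\{h\in W^{4,2}(\Sigma,\R^3)\mid \langle N^S\circ\bar f,h\rangle|_{\partial\Sigma}=0\}$, which is complemented in $W^{4,2}(\Sigma,\R^3)$ (the projection being $h\mapsto h-(\widetilde{\langle N^S\circ\bar f,h\rangle})(N^S\circ\bar f)$ for a fixed bounded extension of the trace, composed with the collar cutoff).

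Applying \Cref{rem:on-submanifolds} with $V=W^{4,2}(\Sigma,\R^3)$, $Y=W^{7/2,2}(\partial\Sigma,\R)$, and $x_0=\bar f$ then yields, after possibly shrinking $U$, that $\mathcal{N}=G^{-1}(\{0\})\cap U$ is an analytic Banach manifold with $T_{\bar f}\mathcal{N}=\ker G'(\bar f)$, which is precisely \eqref{eq:N-submanifold}; and since the set of immersions is open in $W^{4,2}(\Sigma,\R^3)$ (by the Sobolev embedding into $C^1$ and the fact that the immersion condition $\mathrm{rank}\,Df=2$ is an open condition), shrinking $U$ ensures every $f\in\mathcal{N}$ is an immersion. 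The main obstacle is the careful bookkeeping near $\partial\Sigma$ to ensure that $G$ is genuinely well-defined and analytic — i.e., that the relevant portion of $f(\Sigma)$ stays inside the tubular neighborhood $\mathcal U_S$ so that $d^S\circ f$ makes sense and falls under the superposition-operator analyticity results of \Cref{sec:analycompo}; this is exactly the kind of localization already handled for $\bar f$ via \eqref{eq:etildaS} and can be carried over to nearby $f$ by the $C^1$-closeness from the Sobolev embedding. Everything else is a direct application of the submersion machinery in \Cref{rem:on-submanifolds}, mirroring the proof of \Cref{lem:D-submanifold}.
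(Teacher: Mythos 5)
Your proposal is correct and matches the paper's argument essentially verbatim: the same map $F(f) = (d^S\circ f)|_{\partial\Sigma}$ into $W^{7/2,2}(\partial\Sigma)$, the same computation $F'(\bar f)(v)=\langle N^S\circ\bar f,v\rangle|_{\partial\Sigma}$ via \eqref{eq:abldistance}, the same surjectivity construction (extend the boundary datum, multiply by an extension of $N^S\circ\bar f$ such as $N^S\circ\Pi^S\circ\bar f$, and cut off near $\partial\Sigma$), and the same appeal to \Cref{rem:on-submanifolds}. No gaps.
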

\begin{proof}
    The map $F\colon U\to W^{\frac{7}{2},2}(\partial \Sigma,\R)$ with $F(f)\vcentcolon= d^S(f)\big|_{\partial \Sigma}$ satisfies $F'(\bar{f})(v)= \left.\langle N^S \circ \bar{f}, v \rangle \right|_{\partial \Sigma}$ and is a submersion\footnote{For $h \in W^{\frac{7}{2},2}(\partial \Sigma,\R)$ consider $\tilde{v} \in W^{4,2}(\Sigma)$ such that $\left.\tilde{v}\right|_{\partial \Sigma}=h$ and $v\vcentcolon=\chi N^S(\Pi^S \circ \bar{f}) \tilde{v}$ with $\chi$ a smooth function with support in  a neighborhood of $\partial \Sigma$ and identically equal to $1$ on $\partial \Sigma$. Then, $F'(\bar{f})(v)=h$.} in $f=\bar{f}$ with
    \begin{equation}
       \ker F'(\bar f) = \{h\in W^{4,2}(\Sigma,\R^3):\langle N^S\circ\bar{f},h\rangle\big|_{\partial\Sigma}=0\}.
    \end{equation}
    The claim then follows from \Cref{rem:on-submanifolds}.
\end{proof}
\begin{proof}[Proof of \Cref{prop:reparam-as-fw}.]
    After potentially shrinking $\varepsilon$ and $\tilde a$, consider 
    \begin{align}
        &F\colon\{w\in W^{4,2}(\Sigma,\R):\|w\|_{W^{4,2}(\Sigma)}<\tilde a\} \times \mathcal{R} \to \mathcal{N},\\
        &F(w,\zeta)(x)\vcentcolon= f_w(\psi_\zeta(x))=\Phi(\psi_\zeta(x),w(\psi_\zeta(x))). 
    \end{align}
   Using \Cref{prop:ex-gauss-coord}, $F$ is a $C^1$-map between manifolds. For $\varphi\in W^{4,2}(\Sigma,\R)$ and $\zeta\in \mathcal{R}$, using $F(0,0)=\bar{f}$ and $\partial_r\Phi=\xi$ (see \Cref{prop:ex-gauss-coord}), its differential $dF_{(0,0)}\colon W^{4,2}(\Sigma,\R)\times T_0\mathcal{R}\to T_{\bar{f}}\mathcal{N}$ satisfies
    \begin{equation}
        dF_{(0,0)}(\varphi,\zeta) = \varphi \cdot \xi(\cdot,0) + d\bar{f}. \zeta. 
    \end{equation}
    
    We argue now that $dF_{(0,0)}$ is invertible. Injectivity is clear since $\xi(x,0)\notin d\bar f_x(T_x\Sigma)$. To prove surjectivity, consider $h \in T_{\bar{f}}\mathcal{N}$ and define $\varphi\colon\Sigma \to \R$,
    $$ \varphi(x)= \frac{\langle h(x), \nu_{\bar f}(x)\rangle}{\langle \xi (x,0), \nu_{\bar f}(x) \rangle}. $$
    Then $\varphi\in W^{4,2}(\Sigma,\R)$. Moreover, 
    $
        \langle \nu_{\bar f}(x), h(x)-\varphi(x)\xi(x,0)\rangle = 0
    $ for all $x\in\Sigma$. Thus, using that $\bar f$ is an immersion and that $\Sigma$ is compact, there exists a unique $\zeta\in W^{4,2}(\Sigma,\R^M)$ with $\zeta(x)\in T_x\Sigma$ for all $x\in\Sigma$ such that
    $$
        d\bar f_x.\zeta(x)=h(x)-\varphi(x)\xi(x,0)\quad \text{for all $x\in\Sigma$}.
    $$
    Once we have verified that $\zeta\in T_0\mathcal R$, this choice yields that $(dF_{(0,0)}).(\varphi,\zeta)=h$ and thus surjectivity. 
   On $\partial \Sigma$, using $N^S(\bar{f}(x))=\partial_{\eta_{\bar f}}\bar{f}(x)$, $\xi(\cdot,0)=\nu_{\bar f}$ and \eqref{eq:N-submanifold},
    \begin{align}
        0&=\langle N^S(\bar{f}(x)),h(x)\rangle = \varphi(x) \langle \partial_{\eta_{\bar f}}\bar{f}(x),\nu_{\bar{f}}(x)\rangle + \langle d\bar{f}_x.\eta_{\bar f}(x),d\bar{f}_x.\zeta(x)\rangle \\
        &= g_{\bar{f}}(\eta_{\bar f}(x),\zeta(x)).
    \end{align}
    Thus, $\zeta(x)$ is parallel to $\tau(x)\in T_x\partial \Sigma$ which yields $\zeta\in T_0\mathcal{R}$, using \eqref{eq:D-tan-space}.
    
    The local inverse mapping theorem in manifolds, see \Cref{thm:inv-fct}, yields the claim.
\end{proof}

\begin{remark}\label{rem:reparam_fw_C4a}
    Note that \Cref{prop:reparam-as-fw} remains valid when the Sobolev spaces $W^{4,2}$ are replaced by the Hölder spaces $C^{4,\alpha}$. In this case, the trace spaces appearing in the above proof need to be suitably adapted.
\end{remark}

\bibliographystyle{abbrv}
\bibliography{biblio}

\end{document}